\documentclass[12pt]{article} 

\usepackage[top=2.5cm, bottom=2.5cm, left=2.5cm , right=2.5cm]{geometry} 
\geometry{a4paper}
\usepackage[english]{babel}
\usepackage{hyperref}
\usepackage{amsmath,mathtools}
\usepackage{amsthm}
\usepackage{amssymb}
\usepackage{dsfont}
\usepackage{ucs}
\usepackage{enumerate}
\usepackage[utf8x]{inputenc}
\usepackage[T1]{fontenc}
\usepackage{xcolor}
\usepackage{tikz}
\usepackage{circuitikz}
\usepackage{setspace}
\setstretch{1.1}
\newcommand{\R}{\mathbb R}
\newcommand{\N}{\mathbb N}
\newcommand{\Z}{\mathbb Z}

\newcommand{\C}{\mathbb C}


\renewcommand{\leqslant}{\leq}
\newcommand{\F}{\mathrm{\bold F}}

\newcommand{\LL}{\mathrm L}

\newcommand{\into}{\hookrightarrow}



\newcommand{\Aut}{\mathrm{Aut}}

\def\pmp{p.m.p.\@}
\newcommand{\defin}[1]{\textbf{\textit{#1}}}
\newcommand{\inv}{^{-1}}



\newcommand{\Stab}{\mathrm{Stab}}



\newcommand{\GGG}{\normalfont\textsf{G}}

\newcommand{\Cay}{\mathrm{Cay}}
\newcommand{\Isom}{\mathrm{Iso}}
\newcommand{\id}{\mathrm{id}}
\newcommand*\quot[2]{{^{\textstyle #1}\big/_{\textstyle #2}}}

\usepackage[draft,inline,nomargin,index]{fixme}

\fxsetup{theme=color,mode=multiuser}
\FXRegisterAuthor{mj}{amj}{\color{red}MJ}

\newtheorem{thm}{Theorem}[section]
\newtheorem{cor}[thm]{Corollary}

\newtheorem{lem}[thm]{Lemma}
\newtheorem{prop}[thm]{Proposition}

\theoremstyle{definition}
\newtheorem{claim}{Claim}
\newtheorem*{claim*}{Claim}
\newtheorem{ex}[thm]{Example}

\newenvironment{cproof}{\begin{proof}[Proof of the 
		claim]}{\end{proof}}

\newtheorem{qu}[thm]{Question}

\newtheorem{df}[thm]{Definition}
\newtheorem{rmq}[thm]{Remark}

\newtheorem*{convention}{Convention}

\title{Isometric orbit equivalence for probability-measure preserving actions}
\author{Matthieu Joseph}

\begin{document}
	
	\newcommand{\Addresses}{{
			\bigskip
			\footnotesize
			
			\noindent M.~Joseph, \textsc{Université de Lyon, ENS de Lyon, Unité de Mathématiques Pures et Appliquées, 46,
				allée d’Italie 69364 Lyon Cedex 07, FRANCE}\par\nopagebreak\noindent
			\textit{E-mail address: }\texttt{matthieu.joseph@ens-lyon.fr}
						
	}}

	\maketitle
	\abstract{We study probability-measure preserving (\pmp) actions of finitely generated groups via the graphings they define. We introduce and study the notion of isometric orbit equivalence for \pmp{} actions: two \pmp{} actions are isometric orbit equivalent if the graphings defined by some fixed generating systems of the groups are measurably isometric. 
	
	We highlight two kind of phenomena. First, we prove that the notion of isometric orbit equivalence is rigid for groups whose Cayley graph, with respect to a fixed generating system, has a countable group of automorphisms. On the other hand, we introduce a general construction of isometric orbit equivalent \pmp{} actions, which leads to interesting nontrivial examples of isometric orbit equivalent \pmp{} actions for the free group. In particular, we prove that mixing is not invariant under isometric orbit equivalence.
	
	In the appendix, we use our results to exhibit new concrete \pmp{} actions of free groups of finite rank that are mixing but not strongly ergodic, which bears witness that free groups have the Haagerup property.}
	\\
	
	\noindent\textbf{MSC:} 37A15, 37B05, 22F10. \\
	\textbf{Keywords:} Measure-preserving actions, graphings, Cayley graphs, orbit equivalence, quantitative orbit equivalence.
	\tableofcontents

\section{Introduction}

The purpose of this article is to compare probability-measure preserving actions of finitely generated groups via the graphings they define. A \defin{graphing} on a probability space $(X,\mu)$ is a graph $\mathcal{G}$ whose vertex set is $X$ and whose edge set is a measurable, symmetric subset of $X\times X$, such that the equivalence relation $\mathcal{R}$ "belonging to the same connected component of $\mathcal{G}$" satisfies the following two conditions: $(i)$ each $\mathcal{R}$-class is countable, $(ii)$ any bimeasurable bijection $T:A\to B$ between measurable subsets $A,B\subseteq X$, such that $(x,T(x))\in\mathcal{R}$ for all $x\in X$, preserves the measure $\mu$. 

Graphings have seen recently a sharp rise in interest as they play an important role in graph limit theory. Indeed, they serve as limit objects for sequences of bounded degree graphs, see \cite[Part~4]{lovasz} for an introduction to this theory. They are also one of the main objects in the cost theory of \pmp{} actions, which was extensively studied by Gaboriau \cite{gaboriaucout}. 
Graphings are closely related to probability measure preserving actions of countable groups. A probability measure preserving action (\pmp{} action for short) $\Gamma\curvearrowright^\alpha (X,\mu)$ of a countable group $\Gamma$ on a standard probability space $(X,\mu)$ is a collection $(\alpha(\gamma))_{\gamma\in\Gamma}$ of bimeasurable bijections $\alpha(\gamma):X\to X$ which preserve the probability measure $\mu$, such that for all $\gamma,\delta\in\Gamma$, we have $\alpha(\gamma\delta)=\alpha(\gamma)\alpha(\delta)$. A \pmp{} action is \defin{essentially free} if the set of points with trivial stabilizer has full measure. A \pmp{} action is \defin{ergodic} if any measurable set, which is invariant under the action, has measure $0$ or $1$. Assume that $\Gamma$ is finitely generated and fix $S$ a \defin{finite generating system} for $\Gamma$, that is, a finite symmetric set which generates the group and which does not contain the identity element $1_\Gamma$. To any \pmp{} action $\Gamma\curvearrowright^\alpha (X,\mu)$, one can associate a graphing, denoted by $\alpha(S_\Gamma)$, whose vertex set is $X$, and whose edge set is the symmetric set $\{(x,x')\in X\times X\colon \exists s\in S, \alpha(s)x=x'\}$.

The spirit of this article fits into the framework of \defin{quantitative orbit equivalence} and more generally \defin{quantitative measure equivalence} \cite{delabieQuantitativeMeasureEquivalence2020}. These nascent areas aim to understand how metric structures on orbits of \pmp{} actions are distorted under orbit and measure equivalences. 

Bounded orbit equivalence appears to be an important notion of quantitative orbit equivalence. For instance, in the entropy theory, Austin proved that any two \pmp{} essentially free actions of amenable groups that are bounded orbit equivalent (and even integrable orbit equivalent) have the same Kolmogorov-Sinai entropy \cite{austinBehaviourEntropyBounded2016}. Bowen and Lin proved that any two \pmp{} essentially free actions of a free group of finite rank, which are bounded orbit equivalent, have the same $f$-invariant \cite{bowenboundedOE}. Let $\Gamma$ and $\Lambda$ be two finitely generated groups and fix two finite generated systems $S_\Gamma$ and $S_\Lambda$ for $\Gamma$ and $\Lambda$ respectively. We say that two \pmp{} actions $\Gamma\curvearrowright^\alpha(X,\mu)$ and $\Lambda\curvearrowright^\beta(Y,\nu)$ are \defin{bounded orbit equivalent} if there exists a constant $C>0$ and a bimeasurable bijection $\Phi : X\to Y$ such that $\Phi_*\mu=\nu$ and for $\mu$-almost every $x\in X$, the map $\Phi$ is a $C$-biLipschitz map between the connected component of $x\in X$ in the graphing $\alpha(S_\Gamma)$ and the connected component of $\Phi(x)\in Y$ in the graphing $\beta(S_\Lambda)$. It is straightforward to check that bounded orbit equivalence does not depend on the choice of finite generating systems for the groups. 

This notion retains the geometry of the group. For instance, the following result can be extracted from Shalom's work \cite{shalomHarmonicAnalysisCohomology2004}: two finitely generated amenable groups $\Gamma$ and $\Lambda$ admit \pmp{} essentially free actions that are bounded orbit equivalent if and only if $\Gamma$ and $\Lambda$ are biLipschitz equivalent. This result is very specific to the amenable realm, as for instance free groups of different finite ranks are biLipschitz equivalent \cite{papasoglu} but not bounded orbit equivalent because they are not even orbit equivalent \cite{gaboriaucout}. While bounded orbit equivalence is related to the study of graphings which are uniformly biLipschitz, we propose here to study graphings associated with \pmp{} actions up to isometry.
\begin{df}
Let $\Gamma$ and $\Lambda$ be two finitely generated groups and fix two finite generating systems $S_\Gamma$ and $S_\Lambda$ respectively. Two \pmp{} actions $\Gamma\curvearrowright^\alpha(X,\mu)$ and $\Lambda\curvearrowright^\beta(Y,\nu)$ are \defin{isometric orbit equivalent} if the graphings $\alpha(S_\Gamma)$ and $\beta(S_\Lambda)$ are isometric in a measurable sense, that is, if there exists a bimeasurable bijection $\Phi : X\to Y$ such that $\Phi_*\mu=\nu$ and for $\mu$-almost every $x\in X$, the map $\Phi$ is an isometry between the connected component of $x\in X$ in the graphing $\alpha(S_\Gamma)$ and the connected component of $\Phi(x)\in Y$ in the graphing $\beta(S_\Lambda)$. 
\end{df}

Beware that contrary to bounded orbit equivalence, the notion of isometric orbit equivalence heavily depends on the choice of generating systems. 

\begin{convention}In each of the statements in this article, when we say that two \pmp{} actions are isometric orbit equivalent, it refers to the generating systems of the groups that are fixed and clear in the context.
\end{convention}

Consider $\Gamma$ a finitely generated group and fix $S_\Gamma$ a finite generating system of it. The \defin{Cayley graph} $(\Gamma,S_\Gamma)$ is the graph whose vertex set is $\Gamma$ and whose edge set is the symmetric set $\{(\gamma,\delta)\in\Gamma^2\colon\exists s\in S_\Gamma, \delta=\gamma s\}$. We endow the Cayley graph with its graph distance. Two Cayley graphs $(\Gamma,S_\Gamma)$ and $(\Lambda,S_\Lambda)$ are isometric if there exists a bijective isometry between them. Our first result shows that the existence of isometric orbit equivalent actions that are essentially free is connected to the isometry class of the Cayley graph. 
 
\begin{thm}[see Theorem \ref{thm.isomOEiffsameCayleygraphs}] Let $\Gamma$ and $\Lambda$ be two finitely generated groups and fix two finite generating systems $S_\Gamma$ and $S_\Lambda$ of $\Gamma$ and $\Lambda$ respectively. Then $\Gamma$ and $\Lambda$ admit \pmp{} essentially free actions that are isometric orbit equivalent if and only if the Cayley graphs $(\Gamma,S_\Gamma)$ and $(\Lambda,S_\Lambda)$ are isometric.
\end{thm}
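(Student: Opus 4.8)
The plan is to deduce both implications from one observation. For an essentially free \pmp{} action $\Gamma\curvearrowright^\alpha(X,\mu)$ and $\mu$-almost every $x$, the orbit map $\gamma\mapsto\alpha(\gamma)x$ is a bijection from $\Gamma$ onto the orbit of $x$; since $S_\Gamma$ is symmetric and the action is free, it is moreover an isomorphism of graphs from the Cayley graph $(\Gamma,S_\Gamma)$ onto the connected component of $x$ in the graphing $\alpha(S_\Gamma)$, sending $1_\Gamma$ to $x$. I would record this as a lemma. The implication from isometric orbit equivalence to isometric Cayley graphs is then immediate: if $\Phi$ realises an isometric orbit equivalence between two essentially free actions, pick $x$ for which both orbit maps are graph isomorphisms of this kind; then the composition of the isomorphism $(\Gamma,S_\Gamma)\to$ (component of $x$), the restriction of $\Phi$ (an isometry of the components), and the isomorphism (component of $\Phi(x)$) $\to(\Lambda,S_\Lambda)$ is a graph isometry.

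For the converse, fix a graph isometry $\psi\colon(\Gamma,S_\Gamma)\to(\Lambda,S_\Lambda)$ with $\psi(1_\Gamma)=1_\Lambda$. First I would reduce to a cleaner target: transporting the $\Lambda$-action through the bimeasurable bijection realising the orbit equivalence, it is enough to produce, on a single standard probability space $(X,\mu)$, an essentially free $\Gamma$-action $\alpha$ and an essentially free $\Lambda$-action $\beta$ with $\alpha(S_\Gamma)=\beta(S_\Lambda)$ as graphings — indeed two connected graphs on a common vertex set with the same path metric have the same edge set, so ``isometric through the identity'' means ``equal''.

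The construction I have in mind uses the automorphism group of the common Cayley graph. Set $\mathcal T=(\Gamma,S_\Gamma)$ and $G=\Aut(\mathcal T)$. Since $\mathcal T$ is connected of bounded degree, $G$ is locally compact and second countable with compact open vertex stabilisers; since $\Gamma$ sits in $G$ as left translations and acts simply transitively on the vertices, $\Gamma$ is a cocompact lattice, so $G$ is unimodular and $\mathrm{covol}(\Gamma)=\mathrm{Haar}(K)$ where $K=\Stab_G(1_\Gamma)$. Conjugating the left-translation action of $\Lambda$ on $(\Lambda,S_\Lambda)$ by $\psi$ realises $\Lambda$ too as a subgroup of $G$ acting simply transitively on the vertices, hence as a cocompact lattice of the same covolume. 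Now start from any essentially free \pmp{} $\Gamma$-action $\Gamma\curvearrowright(X_0,\mu_0)$, with graphing $\mathcal G_0$ whose components are copies of $\mathcal T$. Over each component there is a space of ``$\Lambda$-parametrisations'' — rooted graph isomorphisms onto $(\Lambda,S_\Lambda)$ — which is a compact homogeneous space for (a copy of) $K$ and so carries a canonical invariant probability measure; I would build the graphing $\widetilde{\mathcal G}$ on the resulting bundle $X$ over $X_0$ whose edge transformations move the $X_0$-coordinate as in $\mathcal G_0$ and move the fibre coordinate by the natural re-rooting maps, which on $G$ are right translations by fixed elements obtained from $S_\Gamma$ and $\psi$. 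By construction the projection $X\to X_0$ is a morphism of graphings, so $\widetilde{\mathcal G}$ inherits the $\Gamma$-structure of $\mathcal G_0$; and the fibre coordinate, by design, records a consistent $\Lambda$-parametrisation of each component, i.e. an essentially free $\Lambda$-action $\beta$. Unwinding the definitions, $\alpha(S_\Gamma)=\widetilde{\mathcal G}=\beta(S_\Lambda)$, which is what is needed; and since $\mathcal G_0$ is arbitrary, this is a flexible construction, presumably the one producing the nontrivial examples (mixing versus non-mixing) for the free group.

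The main obstacle is to check that $\widetilde{\mathcal G}$ is a genuine \pmp{} graphing, i.e. that the re-rooting maps preserve the invariant probability measure on the fibres. This is exactly where the unimodularity of $G=\Aut(\mathcal T)$ is used: re-rooting is governed by right translations on $G$, which preserve the two-sided Haar measure, and one must transfer this down to the compact fibre. It is also the step where the largeness of $G$ is essential, since the fibre reduces to a point precisely when $K$ is trivial, i.e. when the Cayley graph is rigid — the regime of the companion rigidity statement, where no such construction can exist unless $\psi$ already comes from a group isomorphism. The remaining points — essential freeness of $\alpha$ and $\beta$, and the bookkeeping that the fibre coordinate really yields a $\Lambda$-parametrisation compatible with $\psi$ — are routine once this measure-theoretic point is settled.
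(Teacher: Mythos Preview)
Your approach is correct and is close to the paper's, though packaged differently. One small correction for the forward direction: with the paper's conventions the map $\gamma\mapsto\alpha(\gamma)x$ is \emph{not} a graph isomorphism onto the component of $x$ --- adjacency in the graphing reads $\delta\gamma^{-1}\in S_\Gamma$, while in the Cayley graph it reads $\gamma^{-1}\delta\in S_\Gamma$, and these are not equivalent in general. The correct rooted isomorphism is $\gamma\mapsto\alpha(\gamma^{-1})x$. This does not affect your conclusion, since you only need that each component is isometric to the Cayley graph.

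For the converse the paper proceeds in two steps. It first exhibits a canonical isometric orbit equivalence between $\Gamma\curvearrowright(\Isom(\Gamma,\Lambda)/\Lambda,m_\Lambda)$ and $\Lambda\curvearrowright(\Isom(\Lambda,\Gamma)/\Gamma,m_\Gamma)$, realised on the fundamental domain $\Isom_1(\Gamma,\Lambda)$ via the inverse map and the length-preserving cocycles $\sigma(\gamma,f)=f(\gamma^{-1})^{-1}$, $\tau(\lambda,g)=g(\lambda^{-1})^{-1}$; then, if these actions are not free, it takes a product with auxiliary free $\Gamma$- and $\Lambda$-spaces twisted by these same cocycles (Gaboriau's trick). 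Your bundle $X_0\times\Isom_1(\Gamma,\Lambda)$ is exactly this product with the $\Lambda$-side auxiliary factor omitted --- and you are right that it can be omitted, since freeness of $\Gamma$ on $X_0$ already forces the induced $\Lambda$-action to be free. Your ``re-rooting maps'' are precisely the action $(\gamma\cdot f)(\delta)=f(\gamma^{-1})^{-1}f(\gamma^{-1}\delta)$, and the measure-preservation issue you flag is exactly the invariance of the Haar probability on $\Isom_1(\Gamma,\Lambda)$ under this action, which does come down to unimodularity of $\Isom(\Gamma)$.

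So the constructions coincide; the paper's presentation has the advantage of isolating the canonical coupling $\Gamma\curvearrowright\Isom(\Gamma,\Lambda)/\Lambda$ as an object of independent interest. Your guess about the flexibility examples is slightly off: the mixing versus non-mixing examples for the free group come not from varying the base $X_0$ but from replacing $\Lambda$ by a finite-index subgroup of $\Gamma=\F_d$ and comparing $\F_d\curvearrowright\Isom(\F_d)/\Lambda$ with the diagonal action $\F_d\curvearrowright\Isom(\F_d)/\F_d\times\F_d/\Lambda$.
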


One direction of this result is immediate, because in the graphing associated with any \pmp{} essentially free action, almost every connected component is isometric to the Cayley graph of the acting group. Thus, an isometric orbit equivalence between \pmp{} essentially free actions leads to an isometry between the Cayley graphs on almost every connected components of the associated graphings. The converse is proved using the space $\Isom(\Gamma,\Lambda)$ of bijective isometries from $(\Gamma,S_\Gamma)$ to $(\Lambda,S_\Lambda)$. The group $\Lambda$ acts by postcomposition on $\Isom(\Gamma,\Lambda)$. Similarly, the group $\Gamma$ acts by postcomposition on the space $\Isom(\Lambda,\Gamma)$ of bijective isometries from $\Lambda$ to $\Gamma$. We prove that the quotient actions $\Gamma\curvearrowright \Isom(\Gamma,\Lambda)/\Lambda$ and $\Lambda\curvearrowright\Isom(\Lambda,\Gamma)/\Gamma$, endowed with their respective Haar probability measures, are isometric orbit equivalent, see Corollary \ref{cor.naturalisomOE}.

This connection between Cayley graph and isometric orbit equivalence leads to interesting rigidity results. Given a finitely generated group $\Gamma$ with a generating system $S_\Gamma$, we denote by $\Isom(\Gamma)$ the group of bijective isometries of the Cayley graph $(\Gamma,S_\Gamma)$. Equivalently, $\Isom(\Gamma)$ is isomorphic to the group of all graph automorphisms of the Cayley graph. We prove the following result by using techniques introduced by Furman in \cite{furman}.

\begin{thm}[see Theorem \ref{thm.rigidity}]\label{thm.introrigidity}
Let $\Gamma$ and $\Lambda$ be two finitely generated groups and fix two finite generating systems $S_\Gamma$ and $S_\Lambda$ of $\Gamma$ and $\Lambda$ respectively. Assume that the Cayley graphs $(\Gamma,S_\Gamma)$ and $(\Lambda,S_\Lambda)$ are isometric. Let $\Gamma\curvearrowright^\alpha (X,\mu)$ and $\Lambda\curvearrowright^\beta (Y,\nu)$ be two \pmp{} essentially free actions that are isometric orbit equivalent. If $\Isom(\Gamma)$ (equivalently $\Isom(\Lambda)$) is countable, then 
\begin{enumerate}[(i)]
\item There exists finite index subgroups $\Gamma_0\leq\Gamma$ and $\Lambda_0\leq\Lambda$ which are isomorphic. 
\item There exists a $\Gamma_0$-invariant subset $X_0\leqslant X$ of positive measure and a $\Lambda_0$-invariant subset $Y_0\leqslant Y$ of positive measure such that the \pmp{} actions $\Gamma_0\curvearrowright (X_0,\mu_{X_0})$ and $\Lambda_0\curvearrowright (Y_0,\nu_{Y_0})$ are measurably isomorphic.
\end{enumerate}
If in addition, every finite index subgroup of $\Gamma$ acts ergodically on $(X,\mu)$, then $\Gamma$ and $\Lambda$ are isomorphic and $\alpha$ and $\beta$ are measurably isomorphic. 
\end{thm}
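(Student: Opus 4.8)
The plan is to run a Furman-type analysis of the orbit-equivalence cocycle associated to $\Phi$; the key feature of the isometry hypothesis (as opposed to a mere bounded-distortion hypothesis) is that it forces this cocycle, after a suitable quotient, to take values in a \emph{finite} set. First I would set up the cocycle. For $\mu$-a.e.\ $x$, essential freeness identifies $\Orb_\alpha(x)$ with $\Gamma$ via $p_x\colon\gamma\mapsto\alpha(\gamma^{-1})x$, which is a graph isomorphism from the Cayley graph $(\Gamma,S_\Gamma)$ onto the connected component of $x$ in $\alpha(S_\Gamma)$ (here symmetry of $S_\Gamma$ is used), and similarly $q_y\colon\lambda\mapsto\beta(\lambda^{-1})y$ on the $\Lambda$-side. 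Composing, $\phi_x:=q_{\Phi(x)}^{-1}\circ\Phi\circ p_x$ is, for a.e.\ $x$, a bijective isometry of Cayley graphs with $\phi_x(1_\Gamma)=1_\Lambda$, equivalently $\Phi(\alpha(\gamma^{-1})x)=\beta(\phi_x(\gamma)^{-1})\Phi(x)$ for all $\gamma$; and $x\mapsto\phi_x$ is measurable into $\Isom(\Gamma,\Lambda)$. The countability hypothesis enters at this point: $\Isom(\Gamma,\Lambda)$ is a torsor over $\Isom(\Lambda)\cong\Isom(\Gamma)$, hence countable, and since $\Isom(\Lambda)$ acts on the locally finite connected graph $(\Lambda,S_\Lambda)$ a vertex stabilizer is compact, hence finite (a countable compact group is finite), so the left-regular copy of $\Lambda$ has finite index in $\Isom(\Lambda)$; therefore $Z:=\Lambda\backslash\Isom(\Gamma,\Lambda)$ is a \emph{finite} set.

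The heart of the proof is the following. A direct computation from the definitions of $\phi_x,p_x,q_y$ gives the transformation rule $\phi_{\alpha(a)x}=L_{\phi_x(a^{-1})^{-1}}\circ\phi_x\circ L_{a^{-1}}$, where $L_g$ denotes left translation. Because left translations by elements of $\Lambda$ lie in the left-regular copy of $\Lambda$ inside $\Isom(\Lambda)$, projecting to $Z$ kills the leftmost factor, so $\bar\Phi(x):=\Lambda\cdot\phi_x\in Z$ satisfies $\bar\Phi(\alpha(a)x)=a\cdot\bar\Phi(x)$ for a natural $\Gamma$-action on the finite set $Z$; this is the analogue of the step in \cite{furman} where the orbit-equivalence cocycle is shown to take values in a small group. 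Now pick $z_0$ with $\mu(\bar\Phi^{-1}(z_0))>0$, set $X_0:=\bar\Phi^{-1}(z_0)$ and $\Gamma_0:=\Stab_\Gamma(z_0)$; then $[\Gamma:\Gamma_0]<\infty$, $X_0$ is $\Gamma_0$-invariant of positive measure, and on $X_0$ the map $x\mapsto\phi_x$ is \emph{constant}, say equal to $\phi^{(0)}$: within a fixed $\Lambda$-coset there is a unique isometry sending $1_\Gamma$ to $1_\Lambda$, and every $\phi_x$ with $x\in X_0$ is such. Setting $\rho(\gamma):=\phi^{(0)}(\gamma^{-1})^{-1}$ one obtains $\Phi(\alpha(\gamma)x)=\beta(\rho(\gamma))\Phi(x)$ for all $x\in X_0$ and $\gamma\in\Gamma$.

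It then remains to harvest the conclusions. Restricting $\gamma$ to $\Gamma_0$ keeps us inside $X_0$, and iterating the last relation while using essential freeness of $\beta$ on the positive-measure set $\Phi(X_0)$ shows that $\rho|_{\Gamma_0}$ is a homomorphism, injective by essential freeness of $\alpha$. Put $\Lambda_0:=\rho(\Gamma_0)\cong\Gamma_0$ and $Y_0:=\Phi(X_0)$; then $Y_0$ is $\Lambda_0$-invariant, and distinct cosets of $\Lambda_0$ translate $Y_0$ to disjoint sets of equal positive measure, whence $[\Lambda:\Lambda_0]<\infty$. This gives (i), and $(\Phi|_{X_0},\rho)$ is a measurable conjugacy between $\Gamma_0\curvearrowright(X_0,\mu_{X_0})$ and $\Lambda_0\curvearrowright(Y_0,\nu_{Y_0})$, which is (ii). If in addition every finite-index subgroup of $\Gamma$ acts ergodically on $(X,\mu)$, then $\Gamma_0$ does, so the $\Gamma_0$-invariant set $X_0$ is conull; since the $\bar\Phi$-fibers over the $\Gamma$-orbit of $z_0$ all have the same measure, that orbit is a single point, hence $\Gamma_0=\Gamma$, $X_0=X$, $Y_0=\Phi(X)=Y$, $\Lambda_0=\Lambda$, and $(\Phi,\rho)$ is a measurable isomorphism of $\alpha$ and $\beta$.

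The main obstacle — and the only place where "isometric" is genuinely stronger than "bounded" — is the reduction in the second paragraph: identifying the correct finite quotient $Z$ on which the isometry-valued cocycle descends to an honest action. The left/right bookkeeping behind $\phi_{\alpha(a)x}=L_{\phi_x(a^{-1})^{-1}}\circ\phi_x\circ L_{a^{-1}}$ and the finiteness of $Z$ (via compactness of vertex stabilizers in $\Isom(\Lambda)$) are the technical points that need care; everything afterwards is routine orbit-equivalence manipulation.
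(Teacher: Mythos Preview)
Your proof is correct and follows essentially the same Furman-type approach as the paper: your map $\phi_x$ coincides with the paper's $\sigma_x(\gamma)\coloneqq\sigma(\gamma^{-1},x)^{-1}$, and your finite quotient $Z=\Lambda\backslash\Isom(\Gamma,\Lambda)$ is canonically identified (via your unique-representative observation) with the set $\Isom_1(\Gamma,\Lambda)$ that the paper uses directly. The only organizational differences are that the paper argues finiteness of $\Isom_1(\Gamma,\Lambda)$ simply as a compact subset of a countable space (rather than via $[\Isom(\Lambda):\Lambda]<\infty$), and defines $\Lambda_0$ symmetrically as the stabilizer of $(\phi^{(0)})^{-1}$ in $\Isom_1(\Lambda,\Gamma)$ rather than as $\rho(\Gamma_0)$ with a disjoint-translates argument for finite index; both routes yield the same subgroup.
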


We discuss concrete examples of finitely generated groups $\Gamma=\langle S_\Gamma\rangle$ such that $\Isom(\Gamma)$ is countable in Example \ref{ex.fewisometries}. For instance, if $\Z^d$ is equipped with any finite generating set $S_{\Z^d}$, then the Cayley graph $(\Z^d,S_{\Z^d})$ has only countably many bijective isometries. The ergodicity constraint on any finite index subgroup is fulfilled for instance by Bernoulli shifts, or more generally by any mixing action. A \pmp{} action $\Gamma\curvearrowright^\alpha(X,\mu)$ is \defin{mixing} if for any measurable subset $A,B\subseteq X$, 
\[\mu(\alpha(\gamma)A\cap B)\underset{\gamma\to +\infty}{\longrightarrow}\mu(A)\mu(B).\]
The isometric orbit equivalence rigidity result obtained in Theorem \ref{thm.introrigidity} is false in the context of bounded orbit equivalence. Indeed, Fieldsteel and Friedman proved that for any $d\geq 2$ and any ergodic action $\Z^d\curvearrowright^\alpha (X,\mu)$, there exists a mixing \pmp{} action $\Z^d\curvearrowright^\beta (Y,\nu)$ such that $\alpha$ and $\beta$ are bounded orbit equivalent \cite[Thm.~3]{fieldsteelfriedman}. 

Let $\F_d$ be the free group on $d\geq 2$ generators $x_1,\dots,x_d$ with its generating set $S_{\F_d}=\{x_1^{\pm 1},\dots,x_d^{\pm 1}\}$. Then the Cayley graph $(\F_d,S_{\F_d})$ has uncountably many bijective isometries. We prove the following statement, which shows in particular that Theorem \ref{thm.introrigidity} is false in general if $\Isom(\Gamma)$ is uncountable. 

\begin{thm}\label{thm.introfreegroup} There exists ergodic actions $\F_d\curvearrowright^\alpha (X,\mu)$ and $\F_d\curvearrowright^\beta (Y,\nu)$ that are isometric orbit equivalent, such that $\alpha$ is not mixing but $\beta$ is.
\end{thm}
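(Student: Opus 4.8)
The plan is to keep the mixing action fixed and as simple as possible, and to produce an isometric orbit equivalent partner whose failure of mixing comes from a factor that the \emph{unlabelled} graphing cannot detect. Thus it suffices to exhibit one mixing, ergodic, essentially free action $\F_d\curvearrowright^\beta(Y,\nu)$ — for which I take a Bernoulli shift of $\F_d$ over $([0,1],\mathrm{Leb})$ — and an ergodic, non-mixing action $\F_d\curvearrowright^\alpha(X,\mu)$ that is isometric orbit equivalent to $\beta$.

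The guiding observation is that not every kind of non-mixing is permitted. The bipartition of the $2d$-regular tree makes every essentially free $\F_d$-action factor onto the $\F_d$-action on $\Z/2\Z$ through the homomorphism $x_i\mapsto 1$; this factor is \emph{visible} to the graphing — it is ``graph distance modulo $2$'' — hence is preserved under isometric orbit equivalence, so since $\beta$ is mixing it has no such factor and neither can $\alpha$. By contrast, the homomorphism $\phi\colon\F_d\to\Z/2\Z$ with $\phi(x_1)=1$ and $\phi(x_j)=0$ for $j\geq2$ defines an $\F_d$-action on $\Z/2\Z$ that is ergodic but not mixing (it is never mixed by the sequence $x_1^n$), and the structure it puts on the tree — ``flip when you cross an $x_1$-edge'' — is \emph{labelling} data: it requires singling out the $x_1$-edges among the $2d$ edge-directions, which an abstract tree isometry need not preserve. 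Equivalently, an $\F_d$-action has the factor $(\Z/2\Z,\phi)$ if and only if its orbit-graphing carries a measurable family of bi-infinite ``$x_1$-lines'' along which some $\Z/2\Z$-valued function flips; for the Bernoulli shift $\beta$ the obvious such family, the $\beta(x_1)$-orbits, carries no such function because $\beta(x_1)$ is mixing as a $\Z$-action, but an isometric orbit equivalence is free to transport this family to a quite different ``hidden'' one inside $\mathcal{G}_\beta$. So targeting an $\alpha$ that factors onto $(\Z/2\Z,\phi)$ is not a priori excluded.

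To build $\alpha$ I would invoke the general re-marking construction developed earlier in the paper: an isometric orbit equivalent partner of a fixed essentially free action is obtained by choosing, measurably and compatibly along orbits, a new $\F_d$-marking of each orbit-tree — i.e.\ a cocycle valued in the (profinite) stabilizer of the base vertex inside $\Isom(\F_d)$. Starting from $\beta$, one uses a cocycle built from the \emph{local} tree automorphism that transposes the $x_1$- and $x_1^{-1}$-subtrees hanging at a vertex while fixing the other $2d-2$ subtrees pointwise. Because this automorphism is \emph{not} a group automorphism of $\F_d$, the cocycle identity does not force the cocycle to be constant on orbits — unlike a global sign change $x_1\mapsto x_1^{-1}$, which by ergodicity of $\beta$ would only reproduce $\beta$ up to isomorphism — so one genuinely leaves the isomorphism class of $\beta$, and the cocycle is arranged so that the output action $\alpha$ factors onto $(\Z/2\Z,\phi)$. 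The same $\alpha$ also has a transparent concrete guise: one natural incarnation is the co-induced action $\mathrm{Ind}_{N}^{\F_d}(\sigma)$ along the index-two subgroup $N=\ker\phi\cong\F_{2d-1}$, with $\sigma$ a Bernoulli shift of $N$ — this manifestly factors onto $(\Z/2\Z,\phi)$ and is ergodic because $\sigma$ is ergodic (equivalently, because it is relatively weakly mixing over its two-point factor), and one checks that its orbit-graphing is measurably isometric to that of $\beta$ since $\beta|_N$ is itself isomorphic to a Bernoulli shift of $N$.

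The remaining verifications are short: $\alpha$ is essentially free; it is not mixing because it factors onto the non-mixing action $(\Z/2\Z,\phi)$; it is ergodic by the above; and $\beta$ is a Bernoulli shift, hence mixing. I expect the real work — and the main obstacle — to be the middle step: carrying out the general construction in this specific case, that is, producing the measurable re-marking cocycle so that it genuinely defines a \pmp{} action, genuinely alters the dynamics of $x_1$ so as to create the $\phi$-factor, and yet keeps the graphing measurably isometric to that of a Bernoulli shift — in effect, exhibiting the ``hidden $x_1$-line structure'' inside a Bernoulli graphing and checking that moving to it destroys mixing.
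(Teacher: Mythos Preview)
Your plan has the right instinct about which finite quotients can be hidden from the unlabelled tree --- a factor through $\phi\colon\F_d\to\Z/2\Z$ with $\phi(x_1)=1$, $\phi(x_j)=0$ for $j\geq 2$ is precisely a quotient $\F_d/\Lambda$ with $\Lambda\not\subseteq\F_d^{ev}$, and that is exactly the condition the paper isolates. But the choice of $\beta$ as a Bernoulli shift is a genuine obstacle, not a simplification: whether \emph{any} essentially free $\F_d$-action isometric orbit equivalent to the Bernoulli shift $\F_d\curvearrowright([0,1],\mathrm{Leb})^{\F_d}$ must already be measurably isomorphic to it is posed in the paper as an open question. In particular the $f$-invariant is preserved under (bounded, hence) isometric orbit equivalence, so your $\alpha$ would have to match the entropy of $\beta$; your induced/co-induced construction does not obviously meet even this constraint. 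And the one-line justification you offer (``$\beta|_N$ is itself a Bernoulli shift of $N$'') does not produce the isometric orbit equivalence: if you mean the co-induced action $\mathrm{CInd}_N^{\F_d}(\sigma)$ with $\sigma$ an $N$-Bernoulli shift, that is again a Bernoulli shift of $\F_d$ and hence mixing; if you mean the induced action on $\F_d/N\times Z$, it does factor onto $\Z/2\Z$, but its restriction to $N$ is a disjoint union of two copies of $\sigma$, not isomorphic to $\beta|_N$, and no argument is given (or apparent) that its graphing is measurably isometric to that of $\beta$. The ``local swap of the $x_1$- and $x_1^{-1}$-subtrees'' is also not a well-defined \pmp{} automorphism as stated: $y\mapsto\beta(x_1^{\varepsilon(y)})(y)$ need not be injective.

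The paper avoids this difficulty by choosing $\beta$ differently. It takes a finite-index $\Lambda\leq\F_d$ with $\Lambda\not\subseteq\F_d^{ev}$, sets $\beta$ to be the homogeneous action $\F_d\curvearrowright(\Isom(\F_d)/\Lambda,m_\Lambda)$ and $\alpha$ to be the diagonal action $\F_d\curvearrowright(\Isom(\F_d)/\F_d\times\F_d/\Lambda,m_{\F_d}\otimes u)$. The isometric orbit equivalence between these two comes for free from the general construction (Theorem~\ref{thm.constructionisomOE}): the inverse map on $\Isom_1(\F_d)$ supplies a length-preserving cocycle intertwining them, so no delicate re-marking of a Bernoulli graphing is needed. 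That $\alpha$ is not mixing is immediate from the finite factor $\F_d/\Lambda$. That $\beta$ \emph{is} mixing is the substantive step, and it is accessible: the paper uses the Howe--Moore property of $\Isom^{ev}(\F_d)$ (Lubotzky--Mozes, Pemantle) together with a combinatorial lemma showing that $\Lambda\not\subseteq\F_d^{ev}$ forces $\Isom^{ev}(\F_d)$ to act ergodically (hence mixingly) on $\Isom(\F_d)/\Lambda$. So the paper trades your hard step (building a non-isomorphic isometric-OE partner of a Bernoulli shift --- possibly impossible) for a different hard step (proving mixing of a concrete homogeneous action), and the latter yields to known representation-theoretic input.
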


Therefore, mixing is not invariant under isometric orbit equivalence.  Theorem \ref{thm.introfreegroup} is false for $\F_1\simeq\Z$. In fact, for ergodic actions of $\Z$, Belinskaya proved in \cite{belinskayaPartitionsLebesguespace1968} that any two \pmp{} ergodic actions of $\Z$ that are $\LL^1$ orbit equivalent are flip-conjugate, that is, measurably isomorphic up to an automorphism of the group $\Z$. 

Theorem \ref{thm.introfreegroup} is proved using a general construction of isometric orbit equivalent actions that we explain now. For a concrete illustration of Theorem \ref{thm.introfreegroup}, we refer to Section \ref{sec.concreteexample}. 

Let $\Gamma$ be a finitely generated group and fix $S_\Gamma$ a finite generating system. Let $\Lambda\leq\Gamma$ be a finite index subgroup. The groups $\Lambda$ and $\Gamma$ both act by postcomposition on $\Isom(\Gamma)$. Then we prove in Section \ref{sec.generalconstruction} that the quotient action $\Gamma\curvearrowright\Isom(\Gamma)/\Lambda$,  equipped with its Haar probability measure, is isometric orbit equivalent to the diagonal action $\Gamma\curvearrowright\Isom(\Gamma)/\Gamma\times\Gamma/\Lambda$, equipped with the product of the Haar probability measure and the uniform probability measure. Since the diagonal action is never mixing, the aim in order to prove Theorem \ref{thm.introfreegroup} becomes to show that for some subgroups $\Lambda\leqslant\F_d$, the action $\F_d\curvearrowright\Isom(\F_d)/\Lambda$ is mixing. We actually give a complete characterization of such subgroups $\Lambda$ in Section \ref{sec.freegroupmixing}. 
Along the proof, we use the Howe-Moore property satisfied by the group $\Isom(\F_d)$, a result due independently to Lubotzky and Mozes \cite{lubotzky} and to Pemantle \cite{pemantle}.

In Appendix \ref{app}, we use the results of Section \ref{sec.freegroupmixing} to construct new concrete \pmp actions of $\F_d$ that are mixing but not strongly ergodic, which bears witness that free groups have the Haagerup property by a characterization due to Jolissaint \cite[Thm.~2.1.3]{Jolissaint}. 

\begin{thm} Let $d\geq 2$. Let $\mathbf{F}_d=\Gamma_0\leq\Gamma_1\leq\dots\leq\Gamma_n\leq\dots$ be a sequence of finite index subgroups ,each normal in $\mathbf{F}_d$, such that $(i)$ the sequence of Schreier graphs $\mathrm{Sch}(\mathbf{F}_d/\Gamma_n,S_{\mathbf{F}_d})$ is not a family of expanders and $(ii)$ none of the Schreier graphs $\mathrm{Sch}(\mathbf{F}_d/\Gamma_n,S_{\mathbf{F}_d})$ is bipartite. Let $m_n$ denotes the Haar probability measure on $\Isom(\F_d)/\Gamma_n$. Then the projective limit
\[\mathbb{F}_d\curvearrowright \varprojlim(\Isom(\F_d)/\Gamma_n,m_n)\]
is a \pmp{} mixing action that is not strongly ergodic. 
\end{thm}

\paragraph{Acknowledgments.} I would like to thank László Márton Tóth, which provides valuable feedback on this article. I thank Robin Tucker-Drob for suggesting me that Corollary \ref{cor.isomOEflexibility} could lead to a new proof that free groups have the Haagerup property.

\section{Preliminaries}
\subsection{Cayley graph}
In the text, if we write $\Gamma=\langle S\rangle$, we always mean that $\Gamma$ is a countable group and $S$ a \emph{\textbf{generating system}} for $\Gamma$, that is, a symmetric set which generates $\Gamma$, and which does not contain the identity element $1_\Gamma$ of $\Gamma$. In the case where the generating system $S$ is finite, then we say that $\Gamma$ is \defin{finitely generated}. When we say that $\Gamma=\langle S\rangle$ is finitely generated, we implicitly mean that $S$ is a finite generating system. 

Let $\Gamma=\langle S\rangle$ be a countable group. We denote by $\lvert\cdot\rvert_{S}$ the word length on $\Gamma$ with respect to the generating system $S$. It is defined by $\lvert 1_\Gamma\rvert_S =1$ and for all $\gamma\in\Gamma\setminus\{1_\Gamma\}$, 
\[\lvert \gamma\rvert_S\coloneqq \min\{n\in\N\colon \exists s_1,\dots,s_n\in S, \gamma=s_1\dots s_n\}.\]

The \emph{\textbf{Cayley graph}} of $\Gamma=\langle S\rangle$, denoted $(\Gamma,S)$, is the simplicial graph whose vertex set is $\Gamma$ endowed with the metric $(\gamma,\delta)\mapsto\lvert\gamma^{-1}\delta\rvert_S$, called the word metric. In practice, this means that $(\Gamma,S)$ is a graph without multiple edges or loops and there is an edge between two vertices $\gamma,\delta\in\Gamma$ if and only if there is $s\in S$ such that $\gamma=\delta s$. A map $f:\Gamma\to\Gamma$ is an isometry if for all $\gamma,\delta\in\Gamma$ we have
\[\lvert \gamma\inv\delta\rvert_S=\lvert f(\gamma)\inv f(\delta)\rvert_S.\]
The \defin{group of bijective isometries} of a Cayley graph $(\Gamma,S)$ is the group denoted by $\Isom(\Gamma,S)$ of all bijections $f:\Gamma\to\Gamma$ which are isometries. 
The action of $\Gamma$ by left multiplication on itself yields a canonical injective group homomorphism $\Gamma\into\Isom(\Gamma,S)$. Thus, $\Gamma$ identifies naturally as a subgroup of $\Isom(\Gamma,S)$. Each time the generating set is clear in the context, we will write $\Isom(\Gamma)$ instead of $\Isom(\Gamma,S)$. In the literature, the group $\Isom(\Gamma)$ is often referred to as the automorphism group of the Cayley graph $(\Gamma,S_\Gamma)$ and denoted by $\Aut(\Gamma,S_\Gamma)$.

Let $\Gamma=\langle S_\Gamma\rangle$ and $\Lambda=\langle S_\Lambda\rangle$ be two countable groups. A map $f:\Gamma\to\Lambda$ is an isometry if for all $\gamma,\delta\in\Gamma$ we have
\[\lvert\gamma^{-1}\delta\rvert_{S_\Gamma}=\lvert f(\gamma)^{-1}f(\delta)\rvert_{S_\Lambda}.\]
Let $\Isom((\Gamma,S_\Gamma),(\Lambda,S_\Lambda))$ be the space of bijective isometries between the Cayley graphs $(\Gamma,S_\Gamma)$ and $(\Lambda,S_\Lambda)$. We say that the Cayley graphs $(\Gamma,S_\Gamma)$ and $(\Lambda,S_\Lambda)$ are \defin{isometric} if $\Isom((\Gamma,S_\Gamma),(\Lambda,S_\Lambda))$ is nonempty. Again, if the generating systems are clear in the context, we will write $\Isom(\Gamma,\Lambda)$ instead of $\Isom((\Gamma,S_\Gamma),(\Lambda,S_\Lambda))$. 

\begin{ex} Here are some examples of finitely generated groups with isometric Cayley graphs.
\begin{enumerate}
\item Let $\Gamma=\Z$ be equipped with its usual generating system $S_\Gamma=\{-1,+1\}$. Let $\Lambda$ be the infinite dihedral group $D_\infty$, that is, the free product of two copies of the cyclic group $C_2$ of order two. A presentation of $D_\infty$ is given by 
\[D_\infty\coloneqq \langle a,b\mid a^2=b^2=1\rangle.\] 
Let $S_\Lambda$ be the usual generating system $\{a,b\}$. Then the Cayley graphs $(\Gamma,S_\Gamma)$ and $(\Lambda,S_\Lambda)$ are isometric. 

\item More generally, let $\Gamma=\F_d$ be the free group on $d\geq 2$ generators $x_1,\dots,x_d$, and let $S_{\Gamma}$ be the generating system $\{x_1^{\pm 1},\dots,x_d^{\pm 1}\}$. Let $W_{2d}$ be the universal Coxeter group of rank $2d$, that is, the free product of $2d$ copies of $C_2$. A presentation of $W_{2d}$ is given by 
\[W_{2d} = \langle a_1,\dots,a_{2d}\mid a_1^2=\dots=a_{2d}^2=1\rangle,\] 
and let $S_{W_{2d}}$ be the usual generating system $\{a_1,\dots,a_{2d}\}$. Then the Cayley graphs $(\Gamma,S_\Gamma)$ and $(W_{2d},S_{W_{2d}})$ are isometric. 

\item Let $g\geq 2$. Let $\Gamma_g$ be the fundamental group of a closed connected surface of genus $g$. A presentation of $\Gamma_g$ is given by
\[\Gamma_g=\langle x_1,y_1,\dots,x_g,y_g\mid [x_1,y_1]\dots[x_g,y_g]=1\rangle.\]
Let $S_{\Gamma_g}$ be the generating system $\{a_1^{\pm 1},b_1^{\pm 1},\dots,a_g^{\pm 1},b_g^{\pm 1}\}$. Let $\Lambda_g$ be the Coxeter group whose presentation is given by 
\[\Lambda_{g}=\langle a_1,\dots,a_{4g}\mid a_i^2=1, (a_ia_{i+1})^2=1, 1\leq i\leq 4g\rangle\] where the subscript in $(a_ia_{i+1})^2$ is taken modulo $4g$. Let $S_{\Lambda_g}$ be the generating system $\{a_1,\dots,a_{4g}\}$. Then the Cayley graphs $(\Gamma_g,S_{\Gamma_g})$ and $(\Lambda_g,S_{\Lambda_g})$ are isometric \cite{surfaceisometric}.
\item Let $m,n\geq 2$ be two integers. Let $\F_m$ and $\F_n$ be the free group on $m$ generators $x_1,\dots,x_m$ and on $n$ generators $y_1,\dots,y_n$ respectively. Let $\Gamma_{m,n}$ be the direct product $\F_m\times\F_n$ and $S_{\Gamma_{m,n}}$ be the generating system \[S_{\Gamma_{m,n}}\coloneqq\{(x_1^{\pm 1},1_{\F_n}),\dots,(x_m^{\pm 1},1_{\F_n}),(1_{\F_m},y_1^{\pm 1}),\dots,(1_{\F_m},y_n^{\pm 1})\}.\] It follows from the work of Burger and Mozes \cite{burgermozesTrees2000, burgermozesProduct2000} that for appropriate values of $m$ and $n$, the Cayley graph $(\Gamma_{m,n},S_{\Gamma_{m,n}})$ is isometric to the Cayley graph of some virtually simple group. 
\item Dyubina observed in \cite{dyubina} that there exist finitely generated groups $\Gamma=\langle S_\Gamma\rangle$ and $\Lambda=\langle S_\Lambda\rangle$ with isometric Cayley graphs, such that $\Gamma$ is solvable but $\Lambda$ not virtually solvable.  This example and the former one illustrate the fact that some algebraic properties are not preserved under the property of having isometric Cayley graphs.
\end{enumerate}
\end{ex}

\subsection{Definition of isometric orbit equivalence}


Let $(X,\mu)$ be a standard probability space. A bimeasurable bijection $T : X\to X$ is a \defin{\pmp{} automorphism} of $(X,\mu)$ if for all measurable set $A\subseteq X$, one has $\mu(T^{-1}(A))=\mu(A)$. We denote by $\Aut(X,\mu)$ the group of all \pmp{} automorphisms of $(X,\mu)$, two such automorphisms being identified if they coincide on a conull set. A bimeasurable bijection $T : (X,\mu)\to(Y,\nu)$ between two standard probability spaces is a \defin{\pmp{} isomorphism} if $T_*\mu=\nu$. A \pmp{} automorphism $T\in\Aut(X,\mu)$ is \defin{aperiodic} if the $T$-orbit of $\mu$-almost every $x\in X$ is infinite. A \pmp{} action of a countable group $\Gamma$ on $(X,\mu)$ is a homomorphism $\alpha : \Gamma\to\Aut(X,\mu)$. It is \defin{essentially free} if for each $\gamma\in\Gamma\setminus\{1_\Gamma\}$, the \pmp{} automorphism $\alpha(\gamma)$ is aperiodic. If there is no need to give a name to the \pmp{} action $\alpha$, we simply write $\gamma x$ instead of $\alpha(\gamma)x$. 

A \defin{\pmp{} partial automorphism} on $(X,\mu)$ is a bimeasurable bijection $T : A\to B$ between measurable subsets $A,B\subseteq X$, which preserves the measure, that is, for all measurable subset $C\subseteq A$, one has $\mu(T(C))=\mu(C)$. A \defin{graphing} on $(X,\mu)$ is a countable set $\Theta\coloneqq \{T_i : A_i\to B_i\mid i\in I\}$ of \pmp{} partial automorphisms. 
The shortest path distance $d_\Theta(x,y)$ between two distinct points $x,y\in X$ is the smallest integer $n\in\N\cup\{+\infty\}$ for which there exists $i_1,\dots,i_n\in I$ and $\varepsilon_1,\dots,\varepsilon_n\in \{\pm 1\}$ such that $y=T_{i_1}^{\varepsilon_1}\dots T_{i_n}^{\varepsilon_n}(x)$. This defines an (extended) metric $d_\Theta : X\times X\to\N\cup\{+\infty\}$. A measurable subset $A\subseteq X$ is \defin{$\Theta$-saturated} if $A$ is equal to the set of $y\in X$ such that there exists $x\in A$ with $d_\Theta(x,y)<+\infty$.

\begin{df}[Isometric graphings] Two graphings $\Theta$ on $(X,\mu)$ and $\Xi$ on $(Y,\nu)$ are \defin{measurably isometric} if there exist a \pmp{} isomorphism $\Phi : (X,\mu)\to(Y,\nu)$ and $\Theta$-saturated set $X_0\subseteq X$ with $\mu(X_0)=1$, such that $\Phi(X_0)$ is $\Xi$-saturated and  for all $x,x'\in X_0$ we have \[d_\Xi(\Phi(x),\Phi(x'))=d_\Theta(x,x').\]
\end{df}



\begin{df}[Isometric orbit equivalence]\label{df.isomOE}
Let $\Gamma=\langle S_\Gamma\rangle$ and $\Lambda=\langle S_\Lambda\rangle$ be two countable groups. Two \pmp{} actions $\Gamma\curvearrowright^\alpha (X,\mu)$ and $\Lambda\curvearrowright^\beta (Y,\nu)$ are \defin{isometric orbit equivalent} if the graphings $\alpha(S_\Gamma)$ on $(X,\mu)$ and $\beta(S_\Lambda)$ on $(Y,\nu)$ are measurably isometric. 
\end{df}

In the definition of isometric orbit equivalence, both groups $\Gamma$ and $\Lambda$ come with their fixed generating system. It is thus noteworthy that the notion of isometric orbit equivalence heavily depends on the generating systems of the groups.

\begin{ex} Let $T\in\Aut(X,\mu)$ be an aperiodic transformation. Let $C_2$ be the cyclic group of order two, and $\nu$ be the uniform probability measure on $C_2$. Let $\alpha$ be the \pmp{} $\Z$-action on $(X\times C_2,\mu\otimes\nu)$ defined by 
\[\alpha(n)(x,\varepsilon)\coloneqq\left\{\begin{array}{ll}(T^n(x),\varepsilon)& \text{if } n \text{ is even},\\ (T^n(x),1-\varepsilon)&\text{if } n \text{ is odd}.
\end{array}\right.\]
 Let $\beta$ be the \pmp{} action of the infinite dihedral group $D_\infty\coloneqq\langle a,b\mid a^2=b^2=1\rangle$ on $(X\times C_2,\mu\otimes u)$ defined by the action of the two generators
\begin{align*}
\beta(a)(x,0)\coloneqq(T(x),1) &,\quad \beta(a)(x,1)\coloneqq(T^{-1}(x),0),\\
\beta(b)(x,1)\coloneqq(T(x),0) &,\quad \beta(b)(x,0)\coloneqq(T^{-1}(x),1).
\end{align*}
Then the graphings $\alpha(\{\pm 1\})$ and $\beta(\{a,b\})$ are measurably isometric. Thus, if $\Z$ is equipped with the generating system $S_{\Z}\coloneqq\{\pm 1\}$ and $D_\infty$ with the generating system $S_{D_\infty}\coloneqq\{a,b\}$, then the \pmp{} actions $\alpha$ and $\beta$ are isometric orbit equivalent. 
\end{ex}

\begin{ex}\label{ex.coloring} Let $\Gamma=\langle S_\Gamma\rangle$ be a finitely generated group. Let $\Gamma\curvearrowright^\alpha(X,\mu)$ be a \pmp{} action and define \[E\coloneqq\{(x,x')\in X\times X\colon \exists s\in S_\Gamma, \alpha(s)x=x'\}.\] Fix a proper $d$-coloring  on $E$, that is, a measurable map $c: E\to \{1,\dots,d\}$ such that for $\mu$-almost every $x,x',x''\in X$, if $(x,x')$ and $(x,x'')$ are distinct elements of $E$, then $c(x,x')\neq c(x,x'')$. This always exists when $d$ is large enough (see \cite{CLP} for a precise statement) and the smallest such $d$ is called the measurable edge chromatic number of the graphing $\alpha(S_\Gamma)$. Let $W_d$ be the universal Coxeter group of rank $d$, which is given by the presentation
\[\Lambda\coloneqq \langle a_1,\dots,a_d\mid a_1^2=\dots=a_d^2=1\rangle\]
and let $S_{W_d}$ be the generating system $\{a_1,\dots,a_d\}$. Let $\beta$ be the \pmp{} action of $W_d$ defined by the action of its generators 
\[\beta(a_i)(x)\coloneqq\left\{\begin{array}{cl}x' &\text{ if }(x,x')\in E\text{ is such that }c(x,x')=i, \\ x &\text{ else.}\end{array}\right.\]
Then the \pmp{} actions $\alpha$ and $\beta$ are isometric orbit equivalent.
\end{ex}

We now explain a way to prove that \pmp{} actions are isometric orbit equivalent. For this, we need to introduce the notion of length-preserving cocycle. Let $\Gamma$ and $\Lambda$ be two countable groups. Let $\Gamma\curvearrowright (X,\mu)$ be a \pmp{} action. A measurable function $\sigma : \Gamma\times X\to\Lambda$ is a \defin{cocycle} if for all $\gamma,\delta\in\Gamma$,
\[\sigma(\gamma\delta,x)=\sigma(\gamma,\delta x)\sigma(\delta,x) \text{ for }\mu\text{-almost every }x\in X.\] 
If $\Gamma=\langle S_\Gamma\rangle$ and $\Lambda=\langle S_\Lambda\rangle$, we say that a cocycle $\sigma : \Gamma\times X\to\Lambda$ is \defin{length-preserving} if for all $\gamma\in\Gamma$, 
\[\lvert \sigma(\gamma,x)\rvert_{S_\Lambda}=\lvert\gamma\rvert_{S_\Gamma}\text{ for }\mu\text{-almost every }x\in X.\]

\begin{lem}\label{lem.isomOEnonfree} Let $\Gamma=\langle S_\Gamma\rangle$ and $\Lambda=\langle S_\Lambda\rangle$ be two countable groups. Let $\Gamma\curvearrowright^\alpha (X,\mu)$ and $\Lambda\curvearrowright^\beta (Y,\nu)$ be two \pmp{} actions. Assume that there is a \pmp{} isomorphism $\Phi : (X,\mu)\to (Y,\nu)$ and two length-preserving cocycles $\sigma : \Gamma\times X\to\Lambda$ and $\tau:\Lambda\times Y\to\Gamma$ such that for all $\gamma\in\Gamma$, $\lambda\in\Lambda$,
\begin{align*}
&\Phi(\alpha(\gamma)x)=\beta(\sigma(\gamma,x))\Phi(x),\quad\text{ for }\mu\text{-almost every }x\in X,\\
&\Phi(\alpha(\tau(\lambda,y))\Phi\inv(y))=\beta(\lambda)y,\quad\text{ for }\nu\text{-almost every }y\in Y.
\end{align*}
Then $\alpha$ and $\beta$ are isometric orbit equivalent.
\end{lem}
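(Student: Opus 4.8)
The plan is to verify the definition of measurably isometric graphings directly, taking $\Theta\coloneqq\alpha(S_\Gamma)$, $\Xi\coloneqq\beta(S_\Lambda)$, and $\Phi$ itself as the candidate \pmp{} isomorphism. The starting point is the elementary description of the graphing metrics in terms of word metrics: since $S_\Gamma$ is symmetric, for two points $x,y$ lying in the same $\Theta$-component one has $d_\Theta(x,y)=\min\{\lvert\gamma\rvert_{S_\Gamma}\colon\alpha(\gamma)x=y\}$, and likewise $d_\Xi(u,v)=\min\{\lvert\lambda\rvert_{S_\Lambda}\colon\beta(\lambda)u=v\}$; indeed a $\Theta$-path from $x$ to $y$ of length $n$ is the same thing as an element $\gamma$ written as a product of $n$ elements of $S_\Gamma$ with $\alpha(\gamma)x=y$.

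Next I would pin down the conull sets on which the hypotheses hold for all group elements simultaneously. As $\Gamma$ and $\Lambda$ are countable, there is a conull set $X_1\subseteq X$ such that for every $\gamma\in\Gamma$ and every $x\in X_1$ one has $\Phi(\alpha(\gamma)x)=\beta(\sigma(\gamma,x))\Phi(x)$ and $\lvert\sigma(\gamma,x)\rvert_{S_\Lambda}=\lvert\gamma\rvert_{S_\Gamma}$, and a conull set $Y_1\subseteq Y$ such that for every $\lambda\in\Lambda$ and every $y\in Y_1$ one has $\Phi(\alpha(\tau(\lambda,y))\Phi^{-1}(y))=\beta(\lambda)y$ and $\lvert\tau(\lambda,y)\rvert_{S_\Gamma}=\lvert\lambda\rvert_{S_\Lambda}$; shrinking $X_1$ I may also assume $\Phi$ is injective on $X_1$ and $\Phi(X_1)\subseteq Y_1$. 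Let $\mathcal R_\alpha$ and $\mathcal R_\beta$ be the \pmp{} equivalence relations generated by $\Theta$ and $\Xi$. Since the $\mathcal R_\alpha$-saturation of a null set is null, the set $X_0\coloneqq X\setminus[X\setminus X_1]_{\mathcal R_\alpha}$ is conull, $\Theta$-saturated, and contained in $X_1$.

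It then remains to check the two required properties on $X_0$. First, $\Phi(X_0)$ is $\Xi$-saturated: if $x\in X_0$ and $v=\beta(\lambda)\Phi(x)$ for some $\lambda\in\Lambda$, then since $\Phi(x)\in Y_1$ the hypothesis on $\tau$ gives $v=\Phi(\alpha(\tau(\lambda,\Phi(x)))x)$, so $\Phi^{-1}(v)$ lies in the $\mathcal R_\alpha$-class of $x$, hence in $X_0$; thus the entire $\Xi$-component of $\Phi(x)$ is contained in $\Phi(X_0)$. Second, $d_\Xi(\Phi(x),\Phi(x'))=d_\Theta(x,x')$ for $x,x'\in X_0$: for the inequality $\leq$, choose $\gamma$ with $\alpha(\gamma)x=x'$ realizing $d_\Theta(x,x')=\lvert\gamma\rvert_{S_\Gamma}$; then $\Phi(x')=\beta(\sigma(\gamma,x))\Phi(x)$ with $\lvert\sigma(\gamma,x)\rvert_{S_\Lambda}=\lvert\gamma\rvert_{S_\Gamma}$, so $d_\Xi(\Phi(x),\Phi(x'))\leq d_\Theta(x,x')$. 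For $\geq$, choose $\lambda$ with $\beta(\lambda)\Phi(x)=\Phi(x')$ realizing $d_\Xi(\Phi(x),\Phi(x'))=\lvert\lambda\rvert_{S_\Lambda}$; the hypothesis on $\tau$ gives $\Phi(\alpha(\tau(\lambda,\Phi(x)))x)=\Phi(x')$, and injectivity of $\Phi$ on $X_0$ (note $\alpha(\tau(\lambda,\Phi(x)))x\in X_0$, as $X_0$ is $\mathcal R_\alpha$-saturated) forces $\alpha(\tau(\lambda,\Phi(x)))x=x'$ with $\lvert\tau(\lambda,\Phi(x))\rvert_{S_\Gamma}=\lvert\lambda\rvert_{S_\Lambda}$, whence $d_\Theta(x,x')\leq d_\Xi(\Phi(x),\Phi(x'))$.

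The only point requiring genuine care is the interplay of the two cocycles: a single length-preserving cocycle $\sigma$ yields only the inequality $d_\Xi(\Phi(\cdot),\Phi(\cdot))\leq d_\Theta(\cdot,\cdot)$, and since $\alpha$ is not assumed essentially free one cannot recover the reverse inequality by inverting $\sigma$. The hypothesis involving $\tau$ is exactly what supplies the opposite inequality, and at the same time the $\Xi$-saturation of $\Phi(X_0)$. Everything else is routine bookkeeping with countable intersections of conull sets and with saturations of null sets under \pmp{} countable equivalence relations.
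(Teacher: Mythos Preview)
Your proof is correct and follows essentially the same approach as the paper: use $\sigma$ to get $d_\Xi(\Phi(x),\Phi(x'))\leq d_\Theta(x,x')$ and $\tau$ for the reverse inequality, after passing to a suitable conull saturated set. The paper is terser about the existence of $X_0$ (it simply asserts a $\Gamma$-invariant conull $X_0$ with $\Phi(X_0)$ $\Lambda$-invariant on which all equalities hold), whereas you spell out the saturation argument more carefully, but the substance is the same.
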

\begin{proof} Fix a $\Gamma$-invariant set $X_0\subseteq X$ of full measure such that $\Phi(X_0)$ is a $\Lambda$-invariant set and for all $\gamma\in\Gamma$, $\lambda\in\Lambda$, the equalities in the lemma hold for all $x\in X_0$ and $y\in \Phi(X_0)$. Then $X_0$ is $\alpha(S_\Gamma)$-saturated and $\Phi(X_0)$ is $\beta(S_\Lambda)$-saturated. Let us prove that for all $x,x'\in X_0$, we have $d_{\alpha(S_\Gamma)}(x,x')=d_{\beta(S_\Lambda)}(\Phi(x),\Phi(x'))$. Let $x,x'\in X_0$. First, we have $d_{\alpha(S_\Gamma)}(x,x')=+\infty$ if and only if $d_{\beta(S_\Lambda)}(\Phi(x),\Phi(x'))=+\infty$. Thus, we can assume that $d_{\alpha(S_\Gamma)}(x,x')<+\infty$. Let $\gamma\in\Gamma$ such that $d_{\alpha(S_\Gamma)}(x,x')=\lvert \gamma\rvert_{S_\Gamma}$ and $\alpha(\gamma)x=x'$. We have
\begin{align*}
d_{\beta(S_\Lambda)}(\Phi(x),\Phi(x')) &= d_{\beta(S_\Lambda)}(\Phi(x),\beta(\sigma(\gamma,x))\Phi(x)) \\
&\leq \lvert \sigma(\gamma,x)\rvert_{S_\Lambda} \\ 
&=\lvert \gamma\rvert_{S_\Gamma} \\
&=d_{\alpha(S_\Gamma)}(x,x'). 
\end{align*}
The reverse inequality is proved in a similar way, using the fact that $\tau$ is length-preserving. We thus obtain that \[d_{\beta(S_\Lambda)}(\Phi(x),\Phi(x'))=d_{\alpha(S_\Gamma)}(x,x'),\] which proves that $\alpha$ and $\beta$ are isometric orbit equivalent. 
\end{proof}

\subsection{Isometric orbit equivalence for essentially free actions}

Two \pmp{} actions of two countable groups $\Gamma\curvearrowright(X,\mu)$ and $\Lambda\curvearrowright(Y,\nu)$ are \defin{orbit equivalent} if there exists an \defin{orbit equivalence} between them, that is, a \pmp{} isomorphism $\Phi :(X,\mu)\to (Y,\nu)$ such that for $\mu$-almost every $x\in X$, 
\[\Phi(\Gamma x)=\Lambda\Phi(x).\]

Consider now two \pmp{} \emph{essentially free} actions $\Gamma\curvearrowright (X,\mu)$ and $\Lambda\curvearrowright (Y,\nu)$ and let $\Phi : (X,\mu)\to(Y,\nu)$ be an orbit equivalence between them. We therefore have two maps, which are uniquely defined (on a set of full measure) by freeness of the actions:
\begin{align*}
\sigma : \Gamma\times X\to\Lambda&,\quad \Phi(\gamma x)=\sigma(\gamma,x)\Phi(x),\\
\tau : \Lambda\times Y\to\Gamma&,\quad \Phi(\tau(\lambda,y)\Phi\inv(y))=\lambda y.
\end{align*}
They are called the \defin{orbit equivalence cocycles} associated with $\Phi$. Moreover, they satisfy the following properties.
\begin{lem}[Properties of the orbit equivalence cocycles]\label{lem.propertiescocycles} For $\mu$-almost every $x\in X$ and $\nu$-almost every $y\in Y$, the following hold.
\begin{enumerate}[(i)]
\item (Cocycles) The maps $\sigma : \Gamma\times X\to\Lambda$ and $\tau: \Lambda\times X\to\Gamma$ are cocycles. 
\item\label{item.bijection} (Bijections) The maps $\sigma(-,x):\Gamma\to\Lambda$ and $\tau(-,y):\Lambda\to\Gamma$ are bijections which are inverse one of each other: for all $\gamma\in\Gamma$, $\lambda\in\Lambda$,
\[\sigma(\tau(\lambda,y),\Phi\inv(y))=\lambda\text{ and }\tau(\sigma(\gamma,x),\Phi(x))=\gamma.\]
\item (Fixing the identity) We have $\sigma(1_\Gamma,x)=1_\Lambda$ and $\tau(1_\Lambda,y)=1_\Gamma$. 
\end{enumerate}
\end{lem}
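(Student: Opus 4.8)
The plan is to reduce everything to a single bookkeeping step: fixing one conull set on which all the relevant relations hold simultaneously for all group elements, after which the three items follow from short computations that cancel group elements acting on points with trivial stabilizer. Concretely, since $\Gamma$ and $\Lambda$ are countable, I would first choose a conull $\Gamma$-invariant set $X_0\subseteq X$ such that $Y_0:=\Phi(X_0)$ is conull and $\Lambda$-invariant, $\Phi$ restricts to a bijection $X_0\to Y_0$ with inverse $\Phi^{-1}$, every $x\in X_0$ has trivial $\Gamma$-stabilizer, every $y\in Y_0$ has trivial $\Lambda$-stabilizer, and the two defining relations
\[\Phi(\gamma x)=\sigma(\gamma,x)\Phi(x),\qquad \Phi\big(\tau(\lambda,y)\Phi^{-1}(y)\big)=\lambda y\]
hold for all $x\in X_0$, all $y\in Y_0$, all $\gamma\in\Gamma$ and all $\lambda\in\Lambda$. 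This is possible because each of these is an almost-everywhere statement indexed by a countable set (group elements, or pairs of them), so the corresponding null sets can all be removed at once and the remainder saturated under the two actions.

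Once $X_0$ is fixed, item (iii) follows by taking $\gamma=1_\Gamma$ in the first relation: $\sigma(1_\Gamma,x)\Phi(x)=\Phi(x)$, and since $\Phi(x)\in Y_0$ has trivial stabilizer, $\sigma(1_\Gamma,x)=1_\Lambda$; symmetrically, $\tau(1_\Lambda,y)\Phi^{-1}(y)=\Phi^{-1}(y)$ gives $\tau(1_\Lambda,y)=1_\Gamma$. For item (i), the cocycle relation for $\sigma$ comes from computing $\Phi\big((\gamma\delta)x\big)$ in two ways: directly it equals $\sigma(\gamma\delta,x)\Phi(x)$, while writing it as $\Phi\big(\gamma(\delta x)\big)$ and applying the relation twice gives $\sigma(\gamma,\delta x)\sigma(\delta,x)\Phi(x)$; cancelling $\Phi(x)$ yields $\sigma(\gamma\delta,x)=\sigma(\gamma,\delta x)\sigma(\delta,x)$. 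The cocycle relation for $\tau$ is obtained by the same computation with $\Phi^{-1}$, $Y$ and $\tau$ in place of $\Phi$, $X$ and $\sigma$.

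For item (ii), to prove $\tau(\sigma(\gamma,x),\Phi(x))=\gamma$ I would substitute $\lambda=\sigma(\gamma,x)$ and $y=\Phi(x)$ into the second defining relation, obtaining $\Phi\big(\tau(\sigma(\gamma,x),\Phi(x))\,x\big)=\sigma(\gamma,x)\Phi(x)=\Phi(\gamma x)$; injectivity of $\Phi$ on $X_0$ then gives $\tau(\sigma(\gamma,x),\Phi(x))\,x=\gamma x$, and freeness at $x$ gives the claim. Symmetrically, substituting $\gamma=\tau(\lambda,y)$ and $x=\Phi^{-1}(y)$ into the first relation yields $\sigma(\tau(\lambda,y),\Phi^{-1}(y))=\lambda$. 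Together these two identities say that $\sigma(-,x)\colon\Gamma\to\Lambda$ and $\tau(-,\Phi(x))\colon\Lambda\to\Gamma$ are mutually inverse, hence both are bijections, which is item (ii). The only delicate point in the whole argument is the first paragraph — ensuring that one conull set serves simultaneously for all group elements and both actions — and this is exactly where countability of $\Gamma$ and $\Lambda$ is used; the remaining steps are routine cancellations.
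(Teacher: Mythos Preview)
Your proof is correct. The paper states this lemma without proof, treating it as a standard fact about orbit equivalence cocycles; your argument is exactly the routine verification one would expect, and each step (reducing to a single conull invariant set via countability, then cancelling via freeness and injectivity of $\Phi$) is sound.
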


\begin{rmq}\label{rmq.randomorphism} Let $[\Gamma,\Lambda]$ be the set of maps $f : \Gamma\to\Lambda$ such that $f(1_\Gamma)=f(1_\Lambda)$. There is a natural action $\Gamma\curvearrowright [\Gamma,\Lambda]$ defined by \[(\gamma\cdot f)(\delta)\coloneqq f(\gamma\inv)\inv f(\gamma\inv \delta).\] A \defin{randomorphism} from $\Gamma$ to $\Lambda$ is a $\Gamma$-invariant probability measure on $[\Gamma,\Lambda]$, see \cite[Def.~5.2]{monodInvitation}. Monod observed that randomorphisms supported on bijections can be obtained via orbit equivalences as follows. Let $\Phi$ be an orbit equivalence between \pmp{} essentially free actions $\Gamma\curvearrowright (X,\mu)$ and $\Lambda\curvearrowright (Y,\nu)$. Let $\sigma : \Gamma\times X\to \Lambda$ and $\tau : \Lambda\times Y\to\Gamma$ be the orbit equivalence cocycles associated with $\Phi$. For $x\in X$ and $y\in Y$, let $\sigma_x :\Gamma\to\Lambda$ and $\tau_y :\Lambda\to\Gamma$ be defined by  
\begin{align*}
\sigma_x(\gamma)&\coloneqq \sigma(\gamma\inv,x)\inv,\\
\tau_y(\lambda)&\coloneqq \tau(\lambda\inv,y)\inv.
\end{align*}
We deduce by Lemma \ref{lem.propertiescocycles} that $\sigma_x$ is a bijection such that $\sigma_x(1_\Gamma)=1_\Lambda$. Similarly, we get that $\tau_y$ is a bijection such that $\tau_y(1_\Lambda)=1_\Gamma$. Moreover, the cocycle identity for $\sigma$ and $\tau$ implies that $x\in X\mapsto \sigma_x\in [\Gamma,\Lambda]$ is $\Gamma$-equivariant and that $y\in Y\mapsto \tau_y\in [\Lambda,\Gamma]$ is $\Lambda$-equivariant. 
Thus, the pushforward of $\mu$ by $x\mapsto \sigma_x$ is a randomorphism from $\Gamma$ to $\Lambda$ supported on bijections. Similarly, the pushforward of $\nu$ by $y\mapsto \tau_y$ is a randomorphism from $\Lambda$ to $\Gamma$ supported on bijections.
\end{rmq}

Isometric orbit equivalent actions was defined in Definition \ref{df.isomOE} thanks to the graphings associated with the actions. We now characterize isometric orbit equivalent actions in terms of cocycles. In Lemma \ref{lem.isometriccocyclefree}, we will prove that two essentially free actions are isometric orbit equivalent if and only if there exists an orbit equivalence whose associated cocycles are length-preserving. 

\begin{df} Let $\Gamma=\langle S_\Gamma\rangle$ and $\Lambda=\langle S_\Lambda\rangle$ be two countable groups. Let $\Phi : (X,\mu)\to (Y,\nu)$ be an orbit equivalence between two \pmp{} essentially free actions $\Gamma\curvearrowright(X,\mu)$ and $\Lambda\curvearrowright(Y,\nu)$. We say that $\Phi$ is a \defin{length-preserving orbit equivalence} if the orbit equivalence cocycles $\sigma : \Gamma\times X\to\Lambda$ and $\tau : \Lambda\times X\to\Gamma$ associated with $\Phi$ are length-preserving.
\end{df}

\begin{rmq}\label{rmq.randisometry} Let $\Gamma=\langle S_\Gamma\rangle$ and $\Lambda=\langle S_\Lambda\rangle$ be two countable groups. Let $\Isom_1(\Gamma,\Lambda)$ be the set of $f\in\Isom(\Gamma,\Lambda)$ such that $f(1_\Gamma)=1_\Lambda$. This is a subspace of $[\Gamma,\Lambda]$ which is invariant under the action $\Gamma\curvearrowright\Isom_1(\Gamma,\Lambda)$ defined by \[(\gamma\cdot f)(\delta)\coloneqq f(\gamma\inv)\inv f(\gamma\inv\delta).\] Inspired by the language of randomorphism proposed by Monod \cite{monodInvitation}, we say that a $\Gamma$-invariant probability measure on $\Isom_1(\Gamma,\Lambda)$ is a \defin{randisometry} from $\Gamma$ to $\Lambda$. Randisometries can be obtained via length-preserving orbit equivalence as follows. Let $\Phi$ be a length-preserving orbit equivalence between two \pmp{} essentially free actions $\Gamma\curvearrowright (X,\mu)$ and $\Lambda\curvearrowright (Y,\nu)$. Let $\sigma : \Gamma\times X\to \Lambda$ and $\tau : \Lambda\times Y\to\Gamma$ be the orbit equivalence cocycles associated with $\Phi$. Let $\sigma_x\in [\Gamma,\Lambda]$ and $\tau_y\in [\Lambda,\Gamma]$ be defined as in Remark \ref{rmq.randomorphism}. Since $\Phi$ is length-preserving, we get by the cocycle identity that for all $\gamma,\delta\in\Gamma$,
\begin{align*}\lvert\sigma_x(\gamma)\inv\sigma_x(\delta)\rvert_{S_\Lambda}&=\lvert\sigma(\gamma\inv\delta,\delta\inv x)\rvert_{S_\Lambda} \\
&=\lvert \gamma\inv\delta\rvert_{S_\Gamma}.
\end{align*}
This means that $\sigma_x\in\Isom_1(\Gamma,\Lambda)$. Similarly, we have $\tau_y\in\Isom_1(\Gamma,\Lambda)$. Moreover, the map $x\in X\mapsto \sigma_x\in\Isom_1(\Gamma,\Lambda)$ is $\Gamma$-equivariant and the map $y\in Y\mapsto \tau_y\in\Isom_1(\Lambda,\Gamma)$ is $\Lambda$-equivariant. Thus, the pushforward of $\mu$ by $x\mapsto\sigma_x$ is a randisometry from $\Gamma$ to $\Lambda$. Similarly, the pushforward of $\nu$ by $y\mapsto \tau_y$ is a randisometry from $\Lambda$ to $\Gamma$. 
\end{rmq}

\begin{lem}\label{lem.isometriccocyclefree} Let $\Gamma=\langle S_\Gamma\rangle$ and $\Lambda=\langle S_\Lambda\rangle$ be two countable groups. Two \pmp{} essentially free actions $\Gamma\curvearrowright^\alpha (X,\mu)$ and $\Lambda\curvearrowright^\beta(Y,\nu)$ are isometric orbit equivalent if and only if there exists a length-preserving orbit equivalence between them. \end{lem}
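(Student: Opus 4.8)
The plan is to prove the two implications separately. The forward direction should be essentially immediate from Lemma~\ref{lem.isomOEnonfree}: given a length-preserving orbit equivalence $\Phi$ with associated orbit equivalence cocycles $\sigma$ and $\tau$, the defining relations of these cocycles are precisely the two equations $\Phi(\alpha(\gamma)x)=\beta(\sigma(\gamma,x))\Phi(x)$ and $\Phi(\alpha(\tau(\lambda,y))\Phi\inv(y))=\beta(\lambda)y$ required in Lemma~\ref{lem.isomOEnonfree}; since $\sigma,\tau$ are cocycles by Lemma~\ref{lem.propertiescocycles} and length-preserving by hypothesis, Lemma~\ref{lem.isomOEnonfree} applies directly and yields that $\alpha$ and $\beta$ are isometric orbit equivalent.

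For the converse, I would start from a \pmp{} isomorphism $\Phi$ and an $\alpha(S_\Gamma)$-saturated conull set $X_0$ as in the definition of isometric orbit equivalence, with $\Phi(X_0)$ being $\beta(S_\Lambda)$-saturated and $d_{\beta(S_\Lambda)}(\Phi(x),\Phi(x'))=d_{\alpha(S_\Gamma)}(x,x')$ for all $x,x'\in X_0$. The first observation is that, because $S_\Gamma$ generates $\Gamma$, two points lie at finite $d_{\alpha(S_\Gamma)}$-distance if and only if they are in the same $\Gamma$-orbit; hence an $\alpha(S_\Gamma)$-saturated set is the same thing as a $\Gamma$-invariant set, so $X_0$ is $\Gamma$-invariant and $\Phi(X_0)$ is $\Lambda$-invariant, and the distance identity (which preserves finiteness of distances) forces $\Phi(\Gamma x)=\Lambda\Phi(x)$ for $x\in X_0$. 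Thus $\Phi$ is an orbit equivalence, and I may speak of its orbit equivalence cocycles $\sigma$ and $\tau$.

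The remaining point — and the only place where care is needed — is to upgrade the isometry of graphings to the statement that $\sigma$ and $\tau$ are length-preserving. The key auxiliary fact to establish is that in the graphing of an essentially free action one has $d_{\alpha(S_\Gamma)}(x,\alpha(\gamma)x)=\abs{\gamma}_{S_\Gamma}$ for a.e.\ $x$ (each $\gamma$ fixed): the inequality $\leq$ is trivial, and the reverse follows because essential freeness makes all stabilizers trivial on a conull set, so no word strictly shorter than $\gamma$ can send $x$ to $\alpha(\gamma)x$ — only finitely many such words need to be excluded, so the bad set is null. Granting this on both sides, fix $\gamma\in\Gamma$ and pick $x$ in the conull set on which this holds at $x$, on which $x,\alpha(\gamma)x\in X_0$, on which the analogous identity holds at $y=\Phi(x)$ (for all $\lambda\in\Lambda$), and on which the cocycle relation $\Phi(\alpha(\gamma)x)=\beta(\sigma(\gamma,x))\Phi(x)$ holds; then
\[\abs{\gamma}_{S_\Gamma}=d_{\alpha(S_\Gamma)}(x,\alpha(\gamma)x)=d_{\beta(S_\Lambda)}(\Phi(x),\Phi(\alpha(\gamma)x))=d_{\beta(S_\Lambda)}(\Phi(x),\beta(\sigma(\gamma,x))\Phi(x))=\abs{\sigma(\gamma,x)}_{S_\Lambda},\]
so $\sigma$ is length-preserving; running the same argument for the isometric orbit equivalence $\Phi\inv$, whose orbit equivalence cocycle is $\tau$, handles $\tau$. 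Hence $\Phi$ is a length-preserving orbit equivalence. The main obstacle is thus essentially the bookkeeping of conull sets in this last step, together with the elementary but crucial use of aperiodicity to prevent the graphing metric from being strictly smaller than the word metric along an orbit.
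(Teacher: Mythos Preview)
Your proof is correct and follows the same route as the paper: the ``if'' direction is dispatched via Lemma~\ref{lem.isomOEnonfree}, and for the converse you show $\Phi$ is an orbit equivalence and then use essential freeness to identify $d_{\alpha(S_\Gamma)}(x,\alpha(\gamma)x)$ with $\lvert\gamma\rvert_{S_\Gamma}$ almost everywhere, exactly as the paper does. One tiny slip: since the lemma is stated for arbitrary countable (not necessarily finite) generating systems, there may be infinitely many group elements of length $<\lvert\gamma\rvert_{S_\Gamma}$ rather than finitely many --- but there are still only countably many, so the null-set bookkeeping goes through unchanged (the paper sidesteps this by simply passing to a conull set on which the actions are genuinely free).
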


\begin{proof}
If there exists a length-preserving orbit equivalence $\Phi$ between $\alpha$ and $\beta$, then we conclude by Lemma \ref{lem.isomOEnonfree} that $\alpha$ and $\beta$ are isometric orbit equivalent. Let us prove the converse. Assume that $\alpha$ and $\beta$ are isometric orbit equivalent. Let $\Phi : (X,\mu)\to(Y,\nu)$ be a pmp isomorphism, let $X_0\subseteq X$ be a set of full measure which is $\alpha(S_\Gamma)$-saturated, such that $\Phi(X_0)$ is $\beta(S_\Lambda)$-saturated and for all $x,x'\in X_0$, 
 \[d_{\alpha(S_\Gamma)}(x,x')=d_{\beta(S_\Lambda)}(\Phi(x),\Phi(x')).\]
This implies that $d_{\alpha(S_\Gamma)}(x,x')$ is finite if and only if $x$ and $x'$ are in the same $\Gamma$-orbit. Similarly, the distance $d_{\beta(S_\Lambda)}(\Phi(x),\Phi(x'))$ is finite if and only if $\Phi(x)$ and $\Phi(x')$ are in the same $\Lambda$-orbit. Thus, we deduce that $\Phi$ is an orbit equivalence. Let $\sigma : \Gamma\times X\to\Lambda$ and $\tau : \Lambda\times Y\to\Gamma$ be the orbit equivalence cocycles associated with $\Phi$. By definition, for all $\gamma\in\Gamma$ and all $x\in X_0$, we have
\begin{equation}\label{eq.distance}d_{\alpha(S_\Gamma)}(x,\alpha(\gamma)x)=d_{\beta(S_\Lambda)}(\Phi(x),\beta(\sigma(\gamma,x))\Phi(x)).\end{equation}
Since $\alpha$ and $\beta$ are essentially free, we can make the further assumption that $\alpha$ and $\beta$ are genuinely free on $X_0$ and $\Phi(X_0)$ respectively. Then the left hand side of \eqref{eq.distance} is equal to $\lvert \gamma\rvert_{S_\Gamma}$ while the right hand side of \eqref{eq.distance} is equal to $\lvert \sigma(\gamma,x)\rvert_{S_\Lambda}$. We thus get that $\lvert \sigma(\gamma,x)\rvert_{S_\Lambda}=\lvert \gamma\rvert_{S_\Gamma}$. Similarly, we get that $\lvert \tau(\lambda,y)\rvert_{S_\Gamma}=\lvert\lambda\rvert_{S_\Lambda}$ for all $\lambda\in\Lambda$ and  all $y\in \Phi(X_0)$. This proves that $\Phi$ is a length-preserving orbit equivalence. 
\end{proof}

\section{A canonical isometric orbit equivalence}\label{sec.canonical}

Let $\Gamma=\langle S_\Gamma\rangle$ and $\Lambda=\langle S_\Lambda\rangle$ be two finitely generated groups. The space $\Isom(\Gamma,\Lambda)$ of bijective isometries between the Cayley graphs $(\Gamma,S_\Gamma)$ and $(\Lambda,S_\Lambda)$ is a locally compact, totally disconnected space when equipped with the topology of pointwise convergence. The space $\Isom(\Gamma)$ of bijective isometries of the Cayley graph $(\Gamma,S_\Gamma)$ is a totally disconnected, locally compact group when endowed with the topology of pointwise convergence. Moreover, it contains naturally $\Gamma$ as a lattice, and thus is unimodular. 

Assume that $\Isom(\Gamma,\Lambda)$ is nonempty. Then the group $\Isom(\Gamma)$ acts simply transitively on $\Isom(\Gamma,\Lambda)$ by precomposition by the inverse. The group $\Isom(\Lambda)$ also acts simply transitively on $\Isom(\Gamma,\Lambda)$ by postcomposition. This implies that there exists a unique measure (up to a multiplicative constant) on $\Isom(\Gamma,\Lambda)$ which is invariant by the actions $\Isom(\Gamma)\curvearrowright\Isom(\Gamma,\Lambda)$ and $\Isom(\Lambda)\curvearrowright\Isom(\Gamma,\Lambda)$. We call it the Haar measure on $\Isom(\Gamma,\Lambda)$. The inverse map $\Isom(\Gamma,\Lambda)\to\Isom(\Lambda,\Gamma)$ is a bimeasurable bijection, which sends the Haar measure to the Haar measure. The pushforward of the Haar measure on $\Isom(\Gamma,\Lambda)$ by the quotient map $\Isom(\Gamma,\Lambda)\to\Isom(\Gamma,\Lambda)/\Lambda$, rescaled to have mass $1$, is called the Haar probability measure on $\Isom(\Gamma,\Lambda)/\Lambda$. Let $\Isom_1(\Gamma,\Lambda)$ be the compact open subspace of $\Isom(\Gamma,\Lambda)$ defined by 
\[\Isom_1(\Gamma,\Lambda)\coloneqq\{f\in\Isom(\Gamma,\Lambda)\colon f(1_\Gamma)=1_\Lambda\}.\]
This is a fundamental domain for the action $\Lambda\curvearrowright\Isom(\Gamma,\Lambda)$. Let $m$ be the Haar measure on $\Isom(\Gamma,\Lambda)$ such that $m(\Isom_1(\Gamma,\Lambda))=1$. The restriction of $m$ to $\Isom_1(\Gamma,\Lambda)$ is called the Haar probability measure on $\Isom_1(\Gamma,\Lambda)$. Let $m_\Lambda$ be the Haar probability measure on $\Isom(\Gamma,\Lambda)/\Lambda$. We then obtain that the \pmp{} action $\Isom(\Gamma,\Lambda)\curvearrowright (\Isom(\Gamma,\Lambda)/\Lambda,m_\Lambda)$ is measurably isomorphic to the \pmp{} action $\Isom(\Gamma,\Lambda)\curvearrowright(\Isom_1(\Gamma,\Lambda),m)$ defined by 
\[(g\cdot f)\coloneqq f(g\inv(1_\Gamma))\inv f\circ g\inv.\]
The restriction of this action to $\Gamma\leq\Isom(\Gamma,\Lambda)$ boils down to the \pmp{} action $\Gamma\curvearrowright (\Isom_1(\Gamma,\Lambda),m)$ encountered in Remark \ref{rmq.randisometry} and given by
\[(\gamma\cdot f)(\delta)\coloneqq f(\gamma\inv)\inv f(\gamma\inv\delta).\]
Let $m_\Gamma$ denote the Haar probability measure on $\Isom(\Lambda,\Gamma)/\Gamma$. One of the aim of this section is to prove that the actions $\Gamma\curvearrowright (\Isom(\Gamma,\Lambda)/\Lambda,m_\Lambda)$ and $\Lambda\curvearrowright (\Isom(\Lambda,\Gamma)/\Gamma,m_\Gamma)$ are isometric orbit equivalent. In order to prove this, we will work with the $\Gamma$-action on $\Isom_1(\Gamma,\Lambda)$ and the $\Lambda$-action on $\Isom_1(\Lambda,\Gamma)$ instead.

\begin{lem}\label{lem.naturalisomOE} Let $\Gamma=\langle S_\Gamma\rangle$ and $\Lambda=\langle S_\Lambda\rangle$ be two finitely generated groups, such that the Cayley graphs $(\Gamma,S_\Gamma)$ and $(\Lambda,S_\Lambda)$ are isometric. Let $\mu$ be the Haar probability measure on $\Isom_1(\Gamma,\Lambda)$ and $\nu$ the Haar probability measure on $\Isom_1(\Lambda,\Gamma)$. Then the following are true.
\begin{enumerate}[(i)]
\item\label{item.1} The map $\sigma : \Gamma\times\Isom_1(\Gamma,\Lambda)\to\Lambda$ defined by $\sigma(\gamma,f)\coloneqq f(\gamma\inv)\inv$ is a length-preserving cocycle.
\item\label{item.2} The map $\tau : \Lambda\times\Isom_1(\Lambda,\Gamma)\to\Gamma$ defined by $\tau(\lambda,f)\coloneqq f(\lambda\inv)\inv$ is a length-preserving cocycle.
\item\label{item.3} The inverse map $\Phi : (\Isom_1(\Gamma,\Lambda),\mu)\to(\Isom_1(\Lambda,\Gamma),\nu)$ is a \pmp{} isomorphism, such that for all $\gamma\in\Gamma,\lambda\in \Lambda$, we have
\begin{align*}
&\Phi(\gamma\cdot f)=\sigma(\gamma,f)\cdot\Phi(f),\quad \text{ for all }f\in\Isom_1(\Gamma,\Lambda), \\
&\Phi(\tau(\lambda,f)\cdot\Phi\inv(f))=\lambda\cdot f,\quad \text{ for all }f\in\Isom_1(\Lambda,\Gamma).
\end{align*}
\end{enumerate}
 \end{lem}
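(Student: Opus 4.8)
The statement is essentially a computation, so the plan is to verify each item by substituting into the two explicit action formulas: $(\gamma\cdot f)(\delta)=f(\gamma\inv)\inv f(\gamma\inv\delta)$ on $\Isom_1(\Gamma,\Lambda)$ and its mirror image on $\Isom_1(\Lambda,\Gamma)$. The one preliminary fact worth isolating is the formula for the inverse of $\gamma\cdot f$: solving $\lambda=(\gamma\cdot f)(\delta)=f(\gamma\inv)\inv f(\gamma\inv\delta)$ for $\delta$ yields
\begin{equation}\label{eq.invformula}
(\gamma\cdot f)\inv(\lambda)=\gamma\, f\inv\!\bigl(f(\gamma\inv)\,\lambda\bigr),
\end{equation}
and similarly on $\Isom_1(\Lambda,\Gamma)$; this is the identity that makes part (iii) drop out.

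For (i), one first checks that $\sigma(\gamma,f)=f(\gamma\inv)\inv$ genuinely takes values in $\Lambda$ and is continuous in $f$ for the topology of pointwise convergence, hence measurable. The cocycle relation $\sigma(\gamma\eta,f)=\sigma(\gamma,\eta\cdot f)\,\sigma(\eta,f)$ follows by expanding $\sigma(\gamma,\eta\cdot f)=(\eta\cdot f)(\gamma\inv)\inv$ with the action formula: a factor $f(\eta\inv)$ appears and cancels against $\sigma(\eta,f)=f(\eta\inv)\inv$, leaving $f((\gamma\eta)\inv)\inv=\sigma(\gamma\eta,f)$. Length-preservation uses only that $f$ is an isometry with $f(1_\Gamma)=1_\Lambda$:
\[
\lvert\sigma(\gamma,f)\rvert_{S_\Lambda}=\lvert f(\gamma\inv)\rvert_{S_\Lambda}=\lvert f(1_\Gamma)\inv f(\gamma\inv)\rvert_{S_\Lambda}=\lvert\gamma\inv\rvert_{S_\Gamma}=\lvert\gamma\rvert_{S_\Gamma}.
\]
Item (ii) is then item (i) verbatim, with the roles of $(\Gamma,S_\Gamma)$ and $(\Lambda,S_\Lambda)$ exchanged and the computation carried out on $\Isom_1(\Lambda,\Gamma)$.

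For (iii), the map $\Phi\colon f\mapsto f\inv$ carries a bijective isometry $(\Gamma,S_\Gamma)\to(\Lambda,S_\Lambda)$ sending $1_\Gamma$ to $1_\Lambda$ to a bijective isometry $(\Lambda,S_\Lambda)\to(\Gamma,S_\Gamma)$ sending $1_\Lambda$ to $1_\Gamma$, so it restricts to a bijection $\Isom_1(\Gamma,\Lambda)\to\Isom_1(\Lambda,\Gamma)$; by the facts recalled at the start of Section~\ref{sec.canonical} it is bimeasurable and sends Haar measure to Haar measure, and after normalizing so that $\Isom_1$ has total mass $1$ on both sides this gives $\Phi_*\mu=\nu$. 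For the first intertwining identity I would evaluate both sides at a point $\delta\in\Lambda$: by \eqref{eq.invformula} the left side is $\Phi(\gamma\cdot f)(\delta)=(\gamma\cdot f)\inv(\delta)=\gamma f\inv(f(\gamma\inv)\delta)$, while the right side $\sigma(\gamma,f)\cdot\Phi(f)$ is the action of the element $f(\gamma\inv)\inv\in\Lambda$ on $f\inv\in\Isom_1(\Lambda,\Gamma)$, which the $\Lambda$-action formula evaluates to $f\inv(f(\gamma\inv))\inv f\inv(f(\gamma\inv)\delta)=\gamma f\inv(f(\gamma\inv)\delta)$; the two maps agree. The second identity is obtained identically: for $f\in\Isom_1(\Lambda,\Gamma)$ one has $\Phi\inv(f)=f\inv\in\Isom_1(\Gamma,\Lambda)$ and $\tau(\lambda,f)=f(\lambda\inv)\inv\in\Gamma$, so $\tau(\lambda,f)\cdot\Phi\inv(f)$ is, by the $\Gamma$-action formula, the map $\delta\mapsto\lambda f\inv(f(\lambda\inv)\delta)$ on $\Gamma$; applying $\Phi$ (equivalently, \eqref{eq.invformula} with $\gamma$ replaced by $f(\lambda\inv)\inv$ and $f$ by $f\inv$) inverts this map and produces $\delta\mapsto f(\lambda\inv)\inv f(\lambda\inv\delta)$, which is precisely $\lambda\cdot f$.

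I do not expect a genuine obstacle here; the only point requiring care is bookkeeping the four actions in play — the $\Gamma$- and $\Lambda$-actions on each of $\Isom_1(\Gamma,\Lambda)$ and $\Isom_1(\Lambda,\Gamma)$ — and, when verifying the two identities in (iii), invoking each one with the correct orientation (which group element inverts which isometry, and on which side).
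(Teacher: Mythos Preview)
Your proposal is correct and follows exactly the same approach as the paper's proof: direct computation for the cocycle identity, use of the isometry property and $f(1_\Gamma)=1_\Lambda$ for length-preservation, and appeal to the fact that the inverse map sends Haar measure to Haar measure for (iii). The paper is actually more terse than you are---it dismisses the two intertwining identities in (iii) as ``straightforward computations'' without writing them out---so your explicit derivation of the formula \eqref{eq.invformula} for $(\gamma\cdot f)\inv$ and the pointwise verification of both identities supply detail that the paper omits.
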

 
 \begin{proof}
We start by proving \eqref{item.1}. The fact that $\sigma$ is a cocycle is a straightforward computation:
\[\sigma(\gamma\delta,f)=f(\delta\inv\gamma\inv)\inv =f(\delta\inv\gamma\inv)\inv f(\gamma\inv)f(\gamma\inv)\inv =\sigma(\gamma,\delta\cdot f)\sigma(\gamma,f).\]
Moreover, since $f\in\Isom_1(\Gamma,\Lambda)$, we get that
\[\lvert \sigma(\gamma,f)\rvert_{S_\Lambda}=\lvert f(\gamma\inv)\inv f(1_\Gamma)\rvert_{S_\Lambda} = \lvert\gamma\rvert_{S_\Gamma}.\]
This proves that $\sigma$ is length-preserving. The proof of \eqref{item.2} is identical. For the proof of \eqref{item.3}, it is clear that $\Phi$ is a bimeasurable map. Moreover, since the inverse map sends Haar measure to Haar measure, we obtain that $\Phi_*\mu=\nu$. Finally, the two formulas left to prove are straightforward computations. 
\end{proof}

We obtain the following result as a corollary.

\begin{cor}\label{cor.naturalisomOE}
Let $\Gamma=\langle S_\Gamma\rangle$ and $\Lambda=\langle S_\Lambda\rangle$ be two finitely generated groups, such that the Cayley graphs $(\Gamma,S_\Gamma)$ and $(\Lambda,S_\Lambda)$ are isometric. Let $m_\Lambda$ be the Haar probability measure on $\Isom(\Gamma,\Lambda)/\Lambda$ and $m_\Gamma$ the Haar probability measure on $\Isom(\Lambda,\Gamma)/\Gamma$. Then the \pmp{} actions $\Gamma\curvearrowright (\Isom(\Gamma,\Lambda)/\Lambda,m_\Lambda)$ and $\Lambda\curvearrowright (\Isom(\Lambda,\Gamma)/\Gamma,m_\Gamma)$ are isometric orbit equivalent.
\end{cor}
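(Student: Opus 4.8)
The plan is to deduce Corollary~\ref{cor.naturalisomOE} directly from Lemma~\ref{lem.naturalisomOE} together with Lemma~\ref{lem.isomOEnonfree}, by transporting the data provided on $\Isom_1$ to $\Isom(\Gamma,\Lambda)/\Lambda$ via the measurable isomorphism identified in Section~\ref{sec.canonical}. First I would recall the $\Gamma$-equivariant \pmp{} isomorphism $(\Isom_1(\Gamma,\Lambda),\mu)\cong(\Isom(\Gamma,\Lambda)/\Lambda,m_\Lambda)$ coming from the fact that $\Isom_1(\Gamma,\Lambda)$ is a fundamental domain for $\Lambda\curvearrowright\Isom(\Gamma,\Lambda)$ and that the $\Isom(\Gamma,\Lambda)$-action restricts on $\Gamma$ to $(\gamma\cdot f)(\delta)=f(\gamma\inv)\inv f(\gamma\inv\delta)$, exactly the action appearing in Lemma~\ref{lem.naturalisomOE}. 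Symmetrically, one has the $\Lambda$-equivariant isomorphism $(\Isom_1(\Lambda,\Gamma),\nu)\cong(\Isom(\Lambda,\Gamma)/\Gamma,m_\Gamma)$. Under these identifications, the statement of Corollary~\ref{cor.naturalisomOE} is literally equivalent to: the $\Gamma$-action on $(\Isom_1(\Gamma,\Lambda),\mu)$ and the $\Lambda$-action on $(\Isom_1(\Lambda,\Gamma),\nu)$ are isometric orbit equivalent.

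Then I would simply invoke Lemma~\ref{lem.naturalisomOE}: it furnishes a \pmp{} isomorphism $\Phi$ (the inverse map) between these two spaces, together with two length-preserving cocycles $\sigma$ and $\tau$ satisfying precisely the two intertwining identities
\begin{align*}
&\Phi(\gamma\cdot f)=\sigma(\gamma,f)\cdot\Phi(f),\\
&\Phi(\tau(\lambda,f)\cdot\Phi\inv(f))=\lambda\cdot f,
\end{align*}
which are exactly the hypotheses of Lemma~\ref{lem.isomOEnonfree}. Applying Lemma~\ref{lem.isomOEnonfree} yields that these two actions are isometric orbit equivalent, and transporting back along the two equivariant measurable isomorphisms gives the corollary.

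The only point requiring a little care — and the step I expect to be the main (mild) obstacle — is the bookkeeping identifying $(\Isom_1(\Gamma,\Lambda),\mu)$ with $(\Isom(\Gamma,\Lambda)/\Lambda,m_\Lambda)$ as $\Gamma$-spaces, i.e.\ checking that the quotient map $\Isom_1(\Gamma,\Lambda)\into\Isom(\Gamma,\Lambda)\onto\Isom(\Gamma,\Lambda)/\Lambda$ is a $\Gamma$-equivariant \pmp{} isomorphism for the Haar probability measures. This is essentially already spelled out in the discussion preceding Lemma~\ref{lem.naturalisomOE} (the action $\Isom(\Gamma,\Lambda)\curvearrowright(\Isom_1(\Gamma,\Lambda),m)$ is measurably isomorphic to $\Isom(\Gamma,\Lambda)\curvearrowright(\Isom(\Gamma,\Lambda)/\Lambda,m_\Lambda)$, and restricting to $\Gamma$ gives the action of Remark~\ref{rmq.randisometry}); one only needs to observe that the isomorphism is measure-preserving because $m$ was normalized so that $m(\Isom_1(\Gamma,\Lambda))=1$, hence restricts to the Haar probability measure. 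Everything else is a direct citation, so the proof is short: set up the two equivariant identifications, quote Lemma~\ref{lem.naturalisomOE} for the $\Phi,\sigma,\tau$ data, and conclude with Lemma~\ref{lem.isomOEnonfree}.
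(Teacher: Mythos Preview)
Your proposal is correct and follows essentially the same approach as the paper's proof: cite Lemma~\ref{lem.naturalisomOE} for the data $(\Phi,\sigma,\tau)$, feed it into Lemma~\ref{lem.isomOEnonfree} to obtain isometric orbit equivalence of the actions on $\Isom_1$, and then pass to the quotient actions via the identification from Section~\ref{sec.canonical}. The paper's proof is slightly terser on the last identification step (it is stated in one line, relying on the discussion preceding Lemma~\ref{lem.naturalisomOE}), whereas you spell it out more carefully; this is fine and arguably clearer.
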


\begin{proof} Let $\sigma :\Gamma\times \Isom_1(\Gamma,\Lambda)\to\Lambda$ be defined by $\sigma(\gamma,f)\coloneqq f(\gamma\inv)\inv$ and $\tau : \Lambda\times\Isom_1(\Lambda,\Gamma)\to\Gamma$ be defined by $\tau(\lambda,f)\coloneqq f(\lambda\inv)\inv$. By Lemma \ref{lem.naturalisomOE}, these are length-preserving cocycles. Let $\mu$ and $\nu$ be the Haar probability measures on $\Isom_1(\Gamma,\Lambda)$ and $\Isom_1(\Lambda,\Gamma)$ respectively. Then by Lemma \ref{lem.isomOEnonfree} we get that the actions $\Gamma\curvearrowright (\Isom_1(\Gamma,\Lambda),\mu)$ and $\Lambda\curvearrowright (\Isom_1(\Lambda,\Gamma),\nu)$ are isometric orbit equivalent. Thus, the \pmp{} actions $\Gamma\curvearrowright (\Isom(\Gamma,\Lambda)/\Lambda,m_\Lambda)$ and $\Lambda\curvearrowright (\Isom(\Lambda,\Gamma)/\Gamma,m_\Gamma)$ are isometric orbit equivalent.
\end{proof}

In general, the action $\Gamma\curvearrowright (\Isom(\Gamma,\Lambda)/\Lambda,m_\Lambda)$ is not essentially free. However, a standard trick can be used to obtain \pmp{} essentially free actions that are isometric orbit equivalent.

\begin{thm}\label{thm.isomOEiffsameCayleygraphs} Let $\Gamma=\langle S_\Gamma\rangle$ and $\Lambda=\langle S_\Lambda\rangle$ be two finitely generated groups. Then $\Gamma$ and $\Lambda$ admit \pmp{} essentially free actions that are isometric orbit equivalent if and only if the Cayley graphs $(\Gamma,S_\Gamma)$ and $(\Lambda,S_\Lambda)$ are isometric. 
\end{thm}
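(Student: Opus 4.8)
The plan is to prove the two implications separately. The implication ``essentially free isometric orbit equivalent actions exist $\Rightarrow$ the Cayley graphs are isometric'' is a direct translation into the cocycle language of Section~2, while the converse starts from the canonical (generally non-free) isometric orbit equivalence of Lemma~\ref{lem.naturalisomOE} and upgrades it to an essentially free one by a skew-product trick, which is the ``standard trick'' alluded to above.

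For the forward implication, suppose $\Gamma\curvearrowright^\alpha(X,\mu)$ and $\Lambda\curvearrowright^\beta(Y,\nu)$ are \pmp{} essentially free and isometric orbit equivalent. By Lemma~\ref{lem.isometriccocyclefree} there is a length-preserving orbit equivalence $\Phi$ with orbit equivalence cocycles $\sigma\colon\Gamma\times X\to\Lambda$ and $\tau\colon\Lambda\times Y\to\Gamma$. Fixing a $\mu$-generic point $x\in X$ and forming $\sigma_x\in[\Gamma,\Lambda]$ as in Remark~\ref{rmq.randisometry}, the computation recorded there shows $\sigma_x\in\Isom_1(\Gamma,\Lambda)$; in particular $\Isom((\Gamma,S_\Gamma),(\Lambda,S_\Lambda))\neq\emptyset$, i.e.\ the Cayley graphs are isometric. (Geometrically this is the remark from the introduction: essential freeness makes almost every connected component of $\alpha(S_\Gamma)$ isometric to $(\Gamma,S_\Gamma)$, and the isometric orbit equivalence carries it onto a component of $\beta(S_\Lambda)$, which is isometric to $(\Lambda,S_\Lambda)$.)

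For the converse, assume $\Isom(\Gamma,\Lambda)\neq\emptyset$. I would take $\mu,\nu$ the Haar probability measures on $\Isom_1(\Gamma,\Lambda)$ and $\Isom_1(\Lambda,\Gamma)$, the maps $\sigma(\gamma,f)=f(\gamma\inv)\inv$, $\tau(\lambda,f)=f(\lambda\inv)\inv$, and $\Phi$ the inverse map, so that all conclusions of Lemma~\ref{lem.naturalisomOE} hold. These data already satisfy the hypotheses of Lemma~\ref{lem.isomOEnonfree}, but the $\Gamma$-action on $\Isom_1(\Gamma,\Lambda)$ need not be essentially free. To remedy this, fix \emph{any} essentially free \pmp{} action $\Lambda\curvearrowright(Z,\rho)$ — say the Bernoulli shift on $([0,1]^\Lambda,\mathrm{Leb}^{\otimes\Lambda})$ — and form the skew-product action
\[\widetilde\alpha\colon\quad\gamma\cdot(f,z)\coloneqq\bigl(\gamma\cdot f,\ \sigma(\gamma,f)z\bigr)\qquad\text{on }\bigl(\Isom_1(\Gamma,\Lambda)\times Z,\ \mu\otimes\rho\bigr)\]
together with the product action
\[\widetilde\beta\colon\quad\lambda\cdot(g,z)\coloneqq\bigl(\lambda\cdot g,\ \lambda z\bigr)\qquad\text{on }\bigl(\Isom_1(\Lambda,\Gamma)\times Z,\ \nu\otimes\rho\bigr).\]
Here $\widetilde\alpha$ is a genuine \pmp{} action because $\sigma$ is a cocycle (Lemma~\ref{lem.naturalisomOE}\eqref{item.1}), and it is essentially free: for $\gamma\neq 1_\Gamma$ the formula $\sigma(\gamma,f)=f(\gamma\inv)\inv$ forces $\sigma(\gamma,f)\neq 1_\Lambda$ for every $f$ (as $f$ is a bijection with $f(1_\Gamma)=1_\Lambda$), so $\sigma(\gamma,f)$ acts freely on $Z$ for $\mu$-a.e.\ $f$; and $\widetilde\beta$ is essentially free as a product of $\beta$ with a free action.

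Finally I would check that $\widetilde\alpha$ and $\widetilde\beta$ are isometric orbit equivalent by applying Lemma~\ref{lem.isomOEnonfree} to the \pmp{} isomorphism $\widetilde\Phi(f,z)\coloneqq(\Phi(f),z)$ and the length-preserving cocycles $\widetilde\sigma(\gamma,(f,z))\coloneqq\sigma(\gamma,f)$, $\widetilde\tau(\lambda,(g,z))\coloneqq\tau(\lambda,g)$: the two required identities
\[\widetilde\Phi(\gamma\cdot(f,z))=\widetilde\sigma(\gamma,(f,z))\cdot\widetilde\Phi(f,z),\qquad\widetilde\Phi\bigl(\widetilde\tau(\lambda,(g,z))\cdot\widetilde\Phi\inv(g,z)\bigr)=\lambda\cdot(g,z)\]
follow coordinate-by-coordinate from Lemma~\ref{lem.naturalisomOE}\eqref{item.3}, using for the second one the elementary fact $\sigma(\tau(\lambda,g),\Phi\inv(g))=\lambda$, which is immediate from the explicit formulas for $\sigma,\tau,\Phi$. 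I do not expect any serious obstacle once Lemma~\ref{lem.naturalisomOE} is in hand; the only points demanding care are the verification of essential freeness of the skew product $\widetilde\alpha$ and the bookkeeping needed to check the hypotheses of Lemma~\ref{lem.isomOEnonfree} after taking the product with $Z$.
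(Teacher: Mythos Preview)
Your proof is correct and follows essentially the same approach as the paper: the forward direction is identical, and for the converse you start from Lemma~\ref{lem.naturalisomOE} and upgrade to essentially free actions by a skew product, exactly as in the paper's proof. The only cosmetic difference is that the paper takes \emph{two} auxiliary free actions $\Gamma\curvearrowright(X,\mu_X)$ and $\Lambda\curvearrowright(Y,\mu_Y)$ and works on $\Isom_1(\Gamma,\Lambda)\times X\times Y$, whereas you use a single $\Lambda$-space $Z$ and exploit the observation that $\sigma(\gamma,f)=f(\gamma\inv)\inv\neq 1_\Lambda$ for $\gamma\neq 1_\Gamma$ to secure essential freeness of $\widetilde\alpha$; this is a mild simplification but not a different strategy.
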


\begin{proof} Assume that $\Gamma\curvearrowright (X,\mu)$ and $\Lambda\curvearrowright (Y,\nu)$ are \pmp{} essentially free actions that are isometric orbit equivalent. By Lemma \ref{lem.isometriccocyclefree}, there exists a length-preserving orbit equivalence $\Phi : (X,\mu)\to (Y,\nu)$. Let $\sigma: \Gamma\times X\to\Lambda$ and $\tau:\Lambda\times Y\to\Gamma$ be the orbit equivalence cocycles associated with $\Phi$. For $x\in X$, let $\sigma_x : \Gamma\to\Lambda$ be defined by $\sigma_x(\gamma)\coloneqq \sigma(\gamma\inv,x)\inv$. By Remark \ref{rmq.randisometry}, we obtain that $\sigma_x\in\Isom_1(\Gamma,\Lambda)$ for $\mu$-almost every $x\in X$. Thus, there exists a bijective isometry between the Cayley graphs $(\Gamma,S_\Gamma)$ and $(\Lambda,S_\Lambda)$. 

Conversely, assume that the Cayley graphs $(\Gamma,S_\Gamma)$ and $(\Lambda,S_\Lambda)$ are isometric. Let $\mu$ and $\nu$ be the Haar probability measures on $\Isom_1(\Gamma,\Lambda)$ and $\Isom_1(\Lambda,\Gamma)$ respectively. By Corollary \ref{cor.naturalisomOE}, the \pmp{} actions $\Gamma\curvearrowright (\Isom_1(\Gamma,\Lambda),m_\Gamma)$ and $\Lambda\curvearrowright (\Isom_1(\Lambda,\Gamma),\nu)$ are isometric orbit equivalent. If these actions are essentially free, then the proof is complete. Else, we fix two \pmp{} essentially free actions $\Gamma\curvearrowright (X,\mu_X)$ and $\Lambda\curvearrowright (Y,\mu_Y)$ and consider the \pmp{} actions. 
\begin{align*}
\Gamma\curvearrowright (\Isom_1(\Gamma,\Lambda)\times X\times Y,\mu\otimes\mu_X\otimes\mu_Y), \quad &\gamma(f,x,y)\coloneqq (\gamma\cdot f, \gamma x, f(\gamma\inv)\inv y), \\
\Lambda\curvearrowright (\Isom_1(\Lambda,\Gamma)\times X\times Y,\nu\otimes\mu_X\otimes\mu_Y),\quad &\lambda(f,x,y)\coloneqq (\lambda\cdot f, f(\lambda\inv)\inv x, \lambda y).\end{align*}
These actions are essentially free. Moreover, as a direct consequence of Lemma \ref{lem.naturalisomOE}, there is a length-preserving orbit equivalence between them, which implies by Lemma \ref{lem.isometriccocyclefree} that they are isometric orbit equivalent. 
\end{proof}

\begin{rmq} The trick used at the end of the proof for getting essentially free actions while staying (isometric) orbit equivalent is due to Gaboriau \cite[Thm.~2.3]{gaboriausurvey}.
\end{rmq}

\section{Rigidity of isometric orbit equivalence}

The aim of this section is to understand isometric orbit equivalence when the space of bijective isometries $\Isom(\Gamma,\Lambda)$ between $\Gamma=\langle S_\Gamma\rangle$ and $\Lambda=\langle S_\Lambda\rangle$ is countable. Observe that the cardinality of $\Isom(\Gamma)$, $\Isom(\Lambda)$ and $\Isom(\Gamma,\Lambda)$ coincide, because the groups $\Isom(\Gamma)$ and $\Isom(\Lambda)$ acts simply transitively on $\Isom(\Gamma,\Lambda)$.

We say that two countable groups $\Gamma$ and $\Lambda$ are \defin{virtually isomorphic} if there exists finite index subgroups $\Gamma_0\leq\Gamma$ and $\Lambda_0\leq\Lambda$ which are isomorphic. We say that two \pmp{} actions $\Gamma\curvearrowright (X,\mu)$ and $\Lambda\curvearrowright (Y,\nu)$ are \defin{virtually measurably isomorphic} if there exist finite index subgroups $\Gamma_0\leq\Gamma$ and $\Lambda_0\leq\Lambda$, as well as a $\Gamma_0$-invariant subset $X_0\subseteq X$ of positive measure and a $\Lambda_0$-invariant subset $Y_0\subseteq Y$ of positive measure, such that the \pmp{} actions $\Gamma_0\curvearrowright (X_0,\mu_{X_0})$ and $\Lambda_0\curvearrowright (Y_0,\nu_{Y_0})$ are measurably isomorphic.

We prove a rigidity result for isometric orbit equivalence. The strategy of the proof is modeled on the proof of orbit equivalence rigidity phenomena due to Furman \cite{furman}.

\begin{thm}\label{thm.rigidity} Let $\Gamma=\langle S_\Gamma\rangle$ and $\Lambda=\langle S_\Lambda\rangle$ be two finitely generated groups, such that the Cayley graphs $(\Gamma,S_\Gamma)$ and $(\Lambda,S_\Lambda)$ are isometric. Let $\Gamma\curvearrowright^\alpha (X,\mu)$ and $\Lambda\curvearrowright^\beta (Y,\nu)$ be two \pmp{} essentially free actions that are isometric orbit equivalent. Assume that $\Isom(\Gamma)$ (equivalently $\Isom(\Lambda)$) is countable. Then $\Gamma$ and $\Lambda$ are virtually isomorphic groups and the \pmp{} actions $\alpha$ and $\beta$ are virtually measurably isomorphic. If in addition, every finite index subgroup of $\Gamma$ acts ergodically on $(X,\mu)$, then $\Gamma$ and $\Lambda$ are isomorphic and $\alpha$ and $\beta$ are measurably isomorphic. 
\end{thm}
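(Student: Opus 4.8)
The plan is to transplant Furman's orbit equivalence rigidity scheme \cite{furman}, the key simplification being that under the countability hypothesis the relevant space of isometries is in fact \emph{finite}.

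\textbf{Step 1: turning the cocycle into a $\Gamma$-equivariant map to a finite set.} By Lemma \ref{lem.isometriccocyclefree}, the isometric orbit equivalence of $\alpha$ and $\beta$ yields a length-preserving orbit equivalence $\Phi\colon(X,\mu)\to(Y,\nu)$ with orbit equivalence cocycles $\sigma\colon\Gamma\times X\to\Lambda$ and $\tau\colon\Lambda\times Y\to\Gamma$. By Remark \ref{rmq.randisometry}, the map $\phi\colon x\mapsto\sigma_x$, where $\sigma_x(\gamma)\coloneqq\sigma(\gamma\inv,x)\inv$, is a $\Gamma$-equivariant measurable map $(X,\mu)\to\Isom_1(\Gamma,\Lambda)$, the target carrying the $\Gamma$-action $(\gamma\cdot f)(\delta)=f(\gamma\inv)\inv f(\gamma\inv\delta)$. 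The point is that $\Isom_1(\Gamma,\Lambda)$ is a finite set: $\Isom(\Gamma)$ is a countable locally compact group, hence discrete (a countable Baire space has an isolated point, and $\Isom(\Gamma)$ is homogeneous), so the lattice $\Gamma\leq\Isom(\Gamma)$ has finite index, and likewise $[\Isom(\Lambda):\Lambda]<\infty$; since $\Isom(\Lambda)$ acts simply transitively on $\Isom(\Gamma,\Lambda)$ by postcomposition, the $\Lambda$-orbits on $\Isom(\Gamma,\Lambda)$ are in bijection with $\Lambda\backslash\Isom(\Lambda)$, and as $\Isom_1(\Gamma,\Lambda)$ is a fundamental domain for $\Lambda\curvearrowright\Isom(\Gamma,\Lambda)$ we get $\lvert\Isom_1(\Gamma,\Lambda)\rvert=[\Isom(\Lambda):\Lambda]<\infty$.

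\textbf{Step 2: extracting the virtually isomorphic pieces.} The fibers of $\phi$ form a finite measurable partition of $X$ which $\Gamma$ permutes according to its action on $\Isom_1(\Gamma,\Lambda)$. Pick $f_0$ with $\mu(X_0)>0$, where $X_0\coloneqq\phi\inv(f_0)$; discarding a $\Gamma$-invariant null set, assume all almost-everywhere identities below hold genuinely. Let $\Gamma_0\coloneqq\Stab_\Gamma(f_0)$; by orbit–stabilizer (finite orbit) $\Gamma_0$ has finite index in $\Gamma$, and by equivariance of $\phi$ the set $X_0$ is $\Gamma_0$-invariant. On $X_0$ one has $\sigma_x=f_0$, equivalently $\sigma(\gamma,x)=f_0(\gamma\inv)\inv$ for every $\gamma\in\Gamma$, \emph{independently of $x\in X_0$}. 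Plugging this into the cocycle identity $\sigma(\gamma\delta,x)=\sigma(\gamma,\delta x)\sigma(\delta,x)$ with $x\in X_0$ and $\delta\in\Gamma_0$ (so $\delta x\in X_0$) gives $f_0(ab)=f_0(a)f_0(b)$ for all $a\in\Gamma_0,\ b\in\Gamma$; in particular $f_0$ restricts to an injective homomorphism on $\Gamma_0$. Put $\Lambda_0\coloneqq f_0(\Gamma_0)\leq\Lambda$; then $f_0\colon\Gamma_0\to\Lambda_0$ is an isomorphism, and since the bijection $f_0$ carries the (finitely many) cosets of $\Gamma_0$ onto cosets of $\Lambda_0$ we get $[\Lambda:\Lambda_0]=[\Gamma:\Gamma_0]<\infty$. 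Finally, for $\gamma\in\Gamma_0$ and $x\in X_0$, $\Phi(\alpha(\gamma)x)=\sigma(\gamma,x)\Phi(x)=f_0(\gamma\inv)\inv\Phi(x)=f_0(\gamma)\Phi(x)$; so, setting $Y_0\coloneqq\Phi(X_0)$ (which is $\Lambda_0$-invariant of measure $\mu(X_0)>0$), $\Phi$ restricts to a conjugacy of $\Gamma_0\curvearrowright(X_0,\mu_{X_0})$ with $\Lambda_0\curvearrowright(Y_0,\nu_{Y_0})$ through $f_0\colon\Gamma_0\xrightarrow{\sim}\Lambda_0$. This gives the virtual isomorphism of groups and of actions.

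\textbf{Step 3: the ergodic case, and the main difficulty.} If moreover every finite index subgroup of $\Gamma$ acts ergodically on $(X,\mu)$, then $\Gamma_0$ does, so the $\Gamma_0$-invariant set $X_0$ of positive measure is conull; hence $\phi$ is essentially constant equal to $f_0$, forcing $f_0$ to be $\Gamma$-fixed, i.e. $\Gamma_0=\Gamma$. Then $\Lambda_0=\Lambda$, $f_0\colon\Gamma\to\Lambda$ is a group isomorphism, and $\Phi(\alpha(\gamma)x)=f_0(\gamma)\Phi(x)$ holds for $\mu$-a.e.\ $x$ and all $\gamma$, which says exactly that $\Phi$ is a measurable isomorphism from $\alpha$ to $\beta$ through $f_0$. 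I expect the conceptual crux to be Step 1 — noticing that length-preservation plus the randisometry construction turns the orbit equivalence cocycle into a $\Gamma$-equivariant map into the \emph{finite} homogeneous space $\Isom_1(\Gamma,\Lambda)$, which is where the countability (hence discreteness, hence finite covolume) of $\Isom(\Gamma)$ enters and where Furman's argument is adapted; the rest is bookkeeping, the only routine care being the almost-everywhere-to-everywhere passage needed to make $X_0$ and $Y_0$ genuinely invariant.
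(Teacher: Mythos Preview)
Your proof is correct and follows the same scheme as the paper's proof: both use Lemma \ref{lem.isometriccocyclefree} and Remark \ref{rmq.randisometry} to turn the length-preserving cocycle into a $\Gamma$-equivariant map $x\mapsto\sigma_x$ into the finite set $\Isom_1(\Gamma,\Lambda)$, pick a fiber $X_0$ of positive measure, and read off the virtual isomorphism from the constancy of the cocycle on $X_0$. The only cosmetic differences are that the paper argues finiteness of $\Isom_1(\Gamma,\Lambda)$ simply by noting it is a compact subset of a countable space, and that the paper treats $\sigma$ and $\tau$ symmetrically (defining $\Lambda_0$ as the stabilizer of $g_0\coloneqq f_0\inv$ and $Y_0\coloneqq\{y:\tau_y=g_0\}$, then using $\sigma_x\circ\tau_{\Phi(x)}=\id$ to get $\Phi(X_0)=Y_0$), whereas you work only with $\sigma$ and set $\Lambda_0\coloneqq f_0(\Gamma_0)$, $Y_0\coloneqq\Phi(X_0)$ directly via the homomorphism identity $f_0(ab)=f_0(a)f_0(b)$ for $a\in\Gamma_0$.
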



\begin{proof} Thanks to Lemma \ref{lem.isometriccocyclefree}, we fix a length-preserving orbit equivalence $\Phi : (X,\mu)\to(Y,\nu)$ between $\alpha$ and $\beta$. Let $\sigma : X\times\Gamma\to\Lambda$ and $\tau : \Lambda\times Y\to\Gamma$ be the orbit equivalence cocycles associated with $\Phi$. For $x\in X$ and $y\in Y$, let $\sigma_x : \Gamma\to\Lambda$ and $\tau_y : \Lambda\to\Gamma$ be the maps defined by $\sigma_x(\gamma)\coloneqq\sigma(\gamma\inv,x)\inv$ and $\tau_y(\lambda)\coloneqq \tau(\lambda\inv,y)\inv$. By Remark \ref{rmq.randisometry}, we know that $\sigma_x\in\Isom_1(\Gamma,\Lambda)$ and $\tau_y\in\Isom_1(\Lambda,\Gamma)$ for $\mu$-almost every $x\in X$ and $\nu$-almost every $y\in Y$. Moreover, the map $x\mapsto \sigma_x$ is $\Gamma$-invariant and the map $y\mapsto\tau_y$ is $\Lambda$-invariant. Since $\Isom(\Gamma)$ is countable, the space $\Isom(\Gamma,\Lambda)$ is countable and thus the compact subset $\Isom_1(\Gamma,\Lambda)$ is finite. Thus there is $f_0\in\Isom_1(\Gamma,\Lambda)$ such that the set \[X_0\coloneqq \{x\in X\colon \sigma_x=f_0\}\] satisfies $\mu(X_0)>0$. We define \[\Gamma_0\coloneqq\{\gamma\in\Gamma\colon \gamma\cdot f_0=f_0\}.\] This is a finite index subgroup of $\Gamma$, which leaves $X_0$ invariant. Indeed, for all $\gamma\in\Gamma_0$ and $x\in X_0$, we have $\sigma_{\alpha(\gamma) x}=\gamma\cdot\sigma_x=\gamma\cdot f_0=f_0$. Let $g_0\in\Isom_1(\Lambda,\Gamma)$ be the inverse of $f_0$ and let \[Y_0\coloneqq\{y\in Y\colon \tau_y=g_0\}.\] We define \[\Lambda_0\coloneqq\{\lambda\in\Lambda\colon \lambda\cdot g_0=g_0\}.\] This is a finite index subgroup of $\Lambda$, which leaves $Y_0$ invariant. We know by Lemma \ref{lem.propertiescocycles}, that for $\mu$-almost every $x\in X$, for all $\gamma\in\Gamma$ and $\lambda\in\Lambda$,
\[\tau(\sigma(\gamma,x),\Phi(x))=\gamma\text{ and }\sigma(\tau(\lambda,\Phi(x)),x)=\lambda.\]
Thus, the maps $\sigma_x$ and $\tau_{\Phi(x)}$ are inverses of one another, that is $\sigma_x\circ\tau_{\Phi(x)}=\id_\Lambda$ and $\tau_{\Phi(x)}\circ\sigma_x=\id_{\Gamma}$. Therefore, we have $\Phi(X_0)=Y_0$. Thus the map $\Phi$ induces an orbit equivalence (still denoted by) $\Phi : (X_0,\mu_{X_0})\to (Y_0,\mu_{Y_0})$ between the \pmp{} actions $\Gamma_0\curvearrowright (X_0,\mu_{X_0})$ and $\Lambda_0\curvearrowright (Y_0,\nu_{Y_0})$. The orbit equivalence cocycles $\sigma_0: \Gamma_0\times X_0\to Y_0$ and $\tau_0 : \Lambda_0\times Y_0\to \Gamma_0$ associated with this orbit equivalence are independent of the space variable. Indeed for all $\gamma\in\Gamma_0$, $\lambda\in\Lambda_0$,
\begin{align*}
\sigma_0(\gamma,x)=\sigma(\gamma,x)=f_0(\gamma\inv)\inv \text{ for }\mu\text{-almost every }x\in X_0,\\
\tau_0(\lambda,y)=\tau(\lambda,y)=g_0(\lambda\inv)\inv \text{ for }\nu\text{-almost every }y\in Y_0.
\end{align*}
Thus, the groups $\Gamma_0$ and $\Lambda_0$ are isomorphic and the \pmp{} actions $\Gamma_0\curvearrowright (X_0,\mu_{X_0})$ and $\Lambda_0\curvearrowright (Y_0,\nu_{Y_0})$ are measurably isomorphic. This proves that the groups $\Gamma$ and $\Lambda$ are virtually isomorphic and that the actions $\alpha$ and $\beta$ are virtually isomorphic.

If in addition, every finite index subgroup of $\Gamma$ acts ergodically on $(X,\mu)$, then $\mu(X_0)=1$. Since $\Phi$ is a \pmp{} isomorphism, we deduce that $\nu(Y_0)=1$. In order to prove that $\alpha$ and $\beta$ are measurably isomorphic, it remains to show that $\Gamma_0=\Gamma$ and $\Lambda_0=\Lambda$. Up to null set, one can assume that $X_0$ is a $\Gamma$-invariant full measure set. Thus, for all $\gamma\in\Gamma$ and $x\in X_0$, we have $\gamma\cdot f_0=\gamma\cdot\sigma_x =\sigma_{\alpha(\gamma)x}=f_0$. Thus $\Gamma_0=\Gamma$. One proves similarly that $\Lambda_0=\Lambda$. We therefore conclude that $\Gamma$ and $\Lambda$ are isomorphic and that the \pmp{} actions $\alpha$ and $\beta$ are measurably isomorphic.
\end{proof}

Weakly mixing actions are examples of \pmp{} actions for which every finite index subgroup acts ergodically. Concrete examples of weakly mixing actions are Bernoulli shifts. Therefore, we have the following result.

\begin{cor}\label{cor.rigidity} Let $\Gamma=\langle S_\Gamma\rangle$ be a finitely generated group. Assume that $\Isom_1(\Gamma)$ is finite. Let $(A,\kappa)$ be a probability space. Any \pmp{} action  $\Lambda\curvearrowright^\beta (Y,\nu)$ of some finitely generated group $\Lambda=\langle S_\Lambda\rangle$ which is isometric orbit equivalent to the Bernoulli shift $\Gamma\curvearrowright (A,\kappa)^{\Gamma}$ is actually measurably isomorphic to it and $\Lambda$ is isomorphic to $\Gamma$.
\end{cor}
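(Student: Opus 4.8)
The strategy is simply to verify that the Bernoulli shift $\Gamma\curvearrowright (A,\kappa)^\Gamma$ satisfies all the hypotheses of Theorem \ref{thm.rigidity}, and then to feed it into that theorem. First I would recall that the Bernoulli shift is \pmp{} essentially free: for any $\gamma\in\Gamma\setminus\{1_\Gamma\}$, the shift $\alpha(\gamma)$ acts without fixed points almost surely since the coordinates are \iid{} and $\gamma$ has infinite order or at least moves infinitely many coordinates (more carefully, freeness of Bernoulli shifts of infinite groups is standard). Second, the key ergodicity input: every finite index subgroup $\Gamma_0\leq\Gamma$ is itself infinite (as $\Gamma$ is infinite, being finitely generated with an essentially free action on a nonatomic space), and the restriction of a Bernoulli shift to any infinite subgroup is again ergodic—indeed it is again a Bernoulli shift over the base $(A^{\Gamma/\Gamma_0},\kappa^{\Gamma/\Gamma_0})$ if $\Gamma_0$ is normal, and in general one uses that the tail/asymptotically-invariant argument gives ergodicity of Bernoulli shifts over any infinite group. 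This is exactly the statement that Bernoulli shifts are weakly mixing, which the paragraph preceding the corollary already invokes.

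Third, I would note that the hypothesis ``$\Isom(\Gamma)$ countable'' in Theorem \ref{thm.rigidity} is equivalent to ``$\Isom_1(\Gamma)$ finite'': since $\Gamma$ acts simply transitively on $\Isom(\Gamma)/\Gamma$... more precisely, $\Isom(\Gamma)$ is a totally disconnected locally compact group containing $\Gamma$ as a lattice, and $\Isom_1(\Gamma)$ is a compact open subgroup which is a fundamental domain for the $\Gamma$-action by right multiplication; hence $\Isom(\Gamma)$ is countable iff the compact open set $\Isom_1(\Gamma)$ is countable iff (being compact and discrete in the countable case) it is finite. So the assumption of the corollary matches the assumption of the theorem.

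With these observations, Theorem \ref{thm.rigidity} applies verbatim: since $\Lambda\curvearrowright^\beta(Y,\nu)$ is isometric orbit equivalent to the essentially free action $\Gamma\curvearrowright (A,\kappa)^\Gamma$, and since every finite index subgroup of $\Gamma$ acts ergodically on $(A,\kappa)^\Gamma$, the ``in addition'' clause of Theorem \ref{thm.rigidity} yields that $\Gamma$ and $\Lambda$ are isomorphic and that $\alpha$ and $\beta$ are measurably isomorphic. (One small point worth spelling out: isometric orbit equivalence forces the Cayley graphs $(\Gamma,S_\Gamma)$ and $(\Lambda,S_\Lambda)$ to be isometric—this is part of Theorem \ref{thm.isomOEiffsameCayleygraphs}, or can be read off directly from Lemma \ref{lem.isometriccocyclefree} and Remark \ref{rmq.randisometry} as in the first paragraph of the proof of Theorem \ref{thm.isomOEiffsameCayleygraphs}—so the standing hypothesis of Theorem \ref{thm.rigidity} that the Cayley graphs are isometric is automatically satisfied, and $\beta$ is essentially free because it is measurably isomorphic to... no, one must be slightly careful and instead argue that $\beta$ is essentially free because an isometric orbit equivalence with an essentially free action is an orbit equivalence, and orbit equivalence with a free action on a nonatomic space forces freeness up to passing to an invariant conull set where stabilizers are trivial—this is exactly how Lemma \ref{lem.isometriccocyclefree} is used, which already presupposes both actions essentially free; so in fact one should phrase the corollary's hypothesis as ``essentially free'' implicitly, or note that the graphing isometry forces each $\beta(\lambda)$, $\lambda\neq 1_\Lambda$, to move every point since $d_{\beta(S_\Lambda)}(\Phi(x),\beta(\lambda)\Phi(x))=|\lambda|_{S_\Lambda}\geq 1$ for a.e.\ $x$.) The only genuine content beyond bookkeeping is the ergodicity of all finite-index restrictions of the Bernoulli shift, and that is a classical fact (weak mixing), so there is no serious obstacle here—the corollary is a direct specialization.
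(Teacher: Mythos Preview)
Your proposal is correct and matches the paper's approach exactly: the paper derives the corollary in one sentence by noting that Bernoulli shifts are weakly mixing (hence every finite index subgroup acts ergodically) and then invoking Theorem~\ref{thm.rigidity}. Your write-up is in fact more careful than the paper's, spelling out the equivalence $\Isom_1(\Gamma)$ finite $\Leftrightarrow$ $\Isom(\Gamma)$ countable, the isometry of Cayley graphs via Theorem~\ref{thm.isomOEiffsameCayleygraphs}, and the essential freeness of $\beta$---points the paper leaves implicit.
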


\begin{ex}\label{ex.fewisometries} Here are examples of finitely generated groups $\Gamma=\langle S\rangle$ such that $\Isom_1(\Gamma,S)$ is finite. Leemann and de la Salle proved that any finitely generated group $\Gamma$ admits a finite generating system $S$ such that $\Isom_1(\Gamma,S)$ is finite \cite{leemann}. 
For some finitely generated groups $\Gamma$, the set $\Isom_1(\Gamma,S)$ is finite for \emph{all} finite generating systems $S$. For instance, let $\Gamma$ be a finitely generated, torsion free group which is either of polynomial growth, or a lattice in a simple Lie group $G$ (in case $G\simeq\mathrm{SL}_2(\R)$, assume that $\Gamma$ is uniform in $G$). Then for any finite generating system $S$ of $\Gamma$, the space $\Isom_1(\Gamma,S)$ is finite. These facts are due to Trofimov for groups with polynomial growth \cite{trofimov} and to Furman for lattice in simple Lie groups \cite{furmanCayleyGraphs}. We refer to \cite[Sec.~6]{delasalle} for a discussion about these results. Other examples of such groups are obtained by Guirardel and Horbez. They proved the following result: if $\Gamma$ is a torsion-free finite index subgroup of the group of outer automorphisms of the free group $\F_d$ on $d\geq 3$ generators, then for any finite generating systems $S$ of $\Gamma$, the space $\Isom_1(\Gamma,S)$ is finite \cite{horbezguirardel}.
\end{ex}

\begin{rmq} The result of Corollary \ref{cor.rigidity} is false if $\Isom_1(\Gamma,\Lambda)$ is infinite. For instance, let $\Lambda_1$ and $\Lambda_2$ be two non-isomorphic finite groups. Let $\Gamma=\langle S_\Gamma\rangle$ be an infinite, finitely generated group and let $\Gamma_i\coloneqq \Lambda_i*\Gamma$ for $i\in\{1,2\}$, equipped with the finite generating set $\Lambda_i\cup S_\Gamma$. By a co-induction argument, one can show that the Bernoulli shifts $\Gamma_1\curvearrowright ([0,1],\mathrm{Leb})^{\Gamma_1}$ and $\Gamma_2\curvearrowright ([0,1],\mathrm{Leb})^{\Gamma_2}$ are isometric orbit equivalent, see for instance \cite[Thm.~1.1]{bowencoinduction}. However,  one can choose $\Lambda_1,\Lambda_2$ and $\Gamma$ so that $\Gamma_1$ and $\Gamma_2$ are not isomorphic.
\end{rmq}
\begin{qu} Let $\F_d$ be the free group on $d\geq 2$ generators $x_1,\dots,x_d$ and let $S_\Gamma\coloneqq \{x_1^{\pm 1},\dots,x_d^{\pm 1}\}$. Let $\Lambda=\langle S_\Lambda\rangle$ be a finitely generated group. If $\Lambda$ has a \pmp{} essentially free action $\Lambda\curvearrowright^\alpha (Y,\nu)$ which is isometric orbit equivalent to the Bernoulli shift $\F_d\curvearrowright^\beta ([0,1],\mathrm{Leb})^{\F_d}$, does this imply that $\Lambda$ is isomorphic to $\F_d$ and that the \pmp{} actions $\alpha$ and $\beta$ are measurably isomorphic?  
\end{qu}

This question is related to the following problem.  The measurable edge chromatic number of the graphing given by the Bernoulli shift $\F_d\curvearrowright ([0,1],\mathrm{Leb})^{\F_d}$ is known to be either $2d$ or $2d+1$ \cite{CLP}. However, its exact value is unknown \cite[Prob.~5.39]{kechrismarks}. As explained in Example \ref{ex.coloring}, the measurable edge chromatic number is equal to $2d$ if and only if the Bernoulli shift $\F_d\curvearrowright ([0,1],\mathrm{Leb})^{\F_d}$ is isometric orbit equivalent to some \pmp{} essentially free action of the universal Coxeter group of rank $2d$, which admits a presentation of the form
\[\langle a_1,\dots,a_{2d}\mid a_1^2=\dots=a_{2d}^2=1\rangle.\]

\begin{rmq} Let $\Gamma=\langle S_\Gamma\rangle$ be either a finitely generated amenable group, or the free group $\F_d$ on $d\geq 2$ generators with $S_\Gamma$ any free generating system. Then nontrivial Bernoulli shifts over $\Gamma$ are all orbit equivalent. This is a consequence of Ornstein and Weiss' theorem \cite{OrnsteinWeiss} if $\Gamma$ is amenable and a consequence of Bowen's theorem \cite{bowencoinduction} if $\Gamma$ is a free group. The picture is very different when it comes to isometric orbit equivalence. Let $(A,\kappa_A)$ and $(B,\kappa_B)$ be two nontrivial probability spaces. Then the Bernoulli shifts $\Gamma\curvearrowright (A,\kappa_A)^\Gamma$ and $\Gamma\curvearrowright (B,\kappa_B)^\Gamma$ are isometric orbit equivalent if and only if they are measurably isomorphic. This is a consequence of the fact that there is a notion of entropy which distinguishes Bernoulli shifts up to measure-isomorphism and which is preserved under bounded orbit equivalence and thus under isometric orbit equivalence. For amenable groups, Kolmogorov-Sinai entropy is preserved under bounded orbit equivalence \cite{austinBehaviourEntropyBounded2016}, whereas for free groups, the $f$-invariant is preserved under bounded orbit equivalence \cite{bowenboundedOE}.

 \end{rmq}

\section{Construction of isometric orbit equivalent actions}

\subsection{The general construction}\label{sec.generalconstruction}
Given a finitely generated group $\Gamma=\langle S_\Gamma\rangle$ and a finite index subgroup $\Lambda\leq\Gamma$, we explain in this section a construction of \pmp{} isometric orbit equivalent actions of $\Gamma$. In the sequel, we will use the following notation. For all $g\in\Isom(\Gamma)$ and all $f\in\Isom_1(\Gamma)$, we denote by $g\cdot f$ the element of $\Isom_1(\Gamma)$ defined by
\[g\cdot f :\delta\mapsto f(g\inv(1_\Gamma))\inv f(g\inv(\delta)).\] 
We explained in Section \ref{sec.canonical} why the action $(g,f)\mapsto g\cdot f$ is measurably isomorphic to the action $\Isom(\Gamma)\curvearrowright \Isom(\Gamma)/\Gamma$. Beware here, that $\Isom(\Gamma)/\Gamma$ means the quotient of $\Isom(\Gamma)$ under the $\Gamma$-action on $\Isom(\Gamma)$ by postcomposition.

\begin{lem}\label{lem.actionquotient}
Let $\Gamma=\langle S_\Gamma\rangle$ be a finitely generated group. Let $\Lambda\leq\Gamma$ be a finite index subgroup. Let $m_\Lambda$ be the Haar probability measure on $\Isom(\Gamma)/\Lambda$. Let $\mu$ be the Haar probability measure on $\Isom_1(\Gamma)$ and $u$ be the uniform probability measure on $\Gamma/\Lambda$. Then the \pmp{} action $\Isom(\Gamma)\curvearrowright (\Isom(\Gamma)/\Lambda,m_\Lambda)$ is measurably isomorphic to the \pmp{} action $\Isom(\Gamma)\curvearrowright (\Isom_1(\Gamma)\times\Gamma/\Lambda,\mu\otimes u)$ defined by
\[g(f,q)\coloneqq (g\cdot f,f(g\inv(1_\Gamma))\inv q).\]
\end{lem}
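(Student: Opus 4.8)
The plan is to exhibit an explicit measurable bijection between the two spaces and check it intertwines the $\Isom(\Gamma)$-actions and preserves the measures. Recall from Section \ref{sec.canonical} that the $\Isom(\Gamma)$-action $(g,f)\mapsto g\cdot f$ on $\Isom_1(\Gamma)$, equipped with the Haar probability measure $\mu$, is a concrete model for $\Isom(\Gamma)\curvearrowright(\Isom(\Gamma)/\Gamma,m_\Gamma)$, via the identification of $\Isom_1(\Gamma)$ with a fundamental domain for the postcomposition action of $\Gamma$ on $\Isom(\Gamma)$. The idea is that an element of $\Isom(\Gamma)/\Lambda$ is the datum of an element of $\Isom(\Gamma)/\Gamma$ together with a coset in $\Gamma/\Lambda$; the map $f\mapsto f(g\inv(1_\Gamma))\inv$ appearing in the cocycle of Lemma \ref{lem.naturalisomOE} is exactly the gadget that records how the $\Gamma$-coordinate twists.

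First I would fix a set-theoretic section $s:\Gamma/\Lambda\to\Gamma$ of the quotient map $\pi:\Gamma\to\Gamma/\Lambda$, with $s(\Lambda)=1_\Gamma$, and use the resulting bijection $\Isom(\Gamma)\simeq\Isom_1(\Gamma)\times\Gamma$ coming from the fundamental domain $\Isom_1(\Gamma)$: every $h\in\Isom(\Gamma)$ writes uniquely as $h=\gamma_h\circ f_h$ with $f_h\in\Isom_1(\Gamma)$, $\gamma_h\in\Gamma$ (postcomposition by $\gamma_h$, i.e.\ $h=\gamma_h\cdot f_h$ in the notation where $\Gamma$ acts on the left by postcomposition). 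Then $\Isom(\Gamma)/\Lambda$ is in bijection with $\Isom_1(\Gamma)\times(\Gamma/\Lambda)$ by $h\Lambda\mapsto (f_h,\pi(\gamma_h))$; this is clearly a bimeasurable bijection. Next I would compute the pushforward of the action: for $g\in\Isom(\Gamma)$ and $h=\gamma\cdot f$ with $f\in\Isom_1(\Gamma)$, $\gamma\in\Gamma$, one has $gh=(g\gamma)\cdot f$, and using the already-established description of the $\Isom(\Gamma)$-action on the fundamental domain one reads off that the $\Isom_1(\Gamma)$-coordinate of $gh$ is $g\cdot f$ and its $\Gamma$-coordinate is $\gamma\cdot f(g\inv(1_\Gamma))\inv$ — here the factor $f(g\inv(1_\Gamma))\inv$ is precisely the cocycle $\sigma(g,f)$ from Lemma \ref{lem.naturalisomOE}\eqref{item.1} (extended from $\Gamma$ to $\Isom(\Gamma)$), so after projecting modulo $\Lambda$ the action becomes $g(f,q)=(g\cdot f,\,f(g\inv(1_\Gamma))\inv q)$, as claimed. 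Finally, for the measure: the Haar measure on $\Isom(\Gamma)$ restricted to the fundamental domain $\Isom_1(\Gamma)\times\{\text{point}\}$ is $\mu$ and it is invariant under the postcomposition $\Gamma$-action permuting the $\Gamma$-labels, so the induced measure on $\Isom(\Gamma)/\Lambda\simeq\Isom_1(\Gamma)\times(\Gamma/\Lambda)$ is $\mu\otimes u$ with $u$ uniform on the finite set $\Gamma/\Lambda$; this is by construction the Haar probability measure $m_\Lambda$.

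The main obstacle — really the only nontrivial point — is the bookkeeping in the action computation: one must be careful that $\Isom(\Gamma)$ acts by the twisted formula $g\cdot f:\delta\mapsto f(g\inv(1_\Gamma))\inv f(g\inv(\delta))$ and \emph{not} by naive left composition, and that the $\Gamma$-coordinate picks up exactly the cocycle $f\mapsto f(g\inv(1_\Gamma))\inv$ rather than its inverse or a conjugate. I would verify the cocycle relation $f(g_1g_2\inv(1_\Gamma))\inv$ versus the composite of the two twists to make sure the formula for the product action is consistent (this is the same computation as in the proof of Lemma \ref{lem.naturalisomOE}\eqref{item.1}), and double-check that the class $f(g\inv(1_\Gamma))\inv q$ in $\Gamma/\Lambda$ is well defined, i.e.\ independent of the representative of $q$ and of the choice of section $s$ — which it is, since changing $q$ by right multiplication by an element of $\Lambda$ only changes $f(g\inv(1_\Gamma))\inv q$ on the right by the same element of $\Lambda$. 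Once these consistency checks are in place the three verifications (bimeasurable bijection, equivariance, measure preservation) are routine.
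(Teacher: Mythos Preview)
Your overall strategy matches the paper's: build an explicit bijection $\Isom(\Gamma)/\Lambda \simeq \Isom_1(\Gamma)\times\Gamma/\Lambda$ using the decomposition $h=\gamma_h\circ f_h$ with $\gamma_h=h(1_\Gamma)$ and $f_h\in\Isom_1(\Gamma)$, then compute the induced action and check the measure. The paper does exactly this, phrasing it via an explicit fundamental domain $D=\{s(q)\circ f : f\in\Isom_1(\Gamma),\ q\in\Gamma/\Lambda\}$ for the $\Lambda$-action.

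However, the bookkeeping you flag as ``the only nontrivial point'' does in fact go wrong in your write-up. In the paper's conventions, $\Lambda$ acts on $\Isom(\Gamma)$ by \emph{postcomposition}, $\lambda:h\mapsto\lambda\circ h$, while $\Isom(\Gamma)$ acts on the quotient by \emph{precomposition by the inverse}, $g:h\mapsto h\circ g^{-1}$ (these are the two commuting actions). Under the $\Lambda$-action, $\gamma_h=h(1_\Gamma)$ becomes $\lambda\gamma_h$, so your proposed second coordinate $\pi(\gamma_h)=\gamma_h\Lambda\in\Gamma/\Lambda$ is \emph{not} $\Lambda$-invariant, and the map $h\Lambda\mapsto(f_h,\gamma_h\Lambda)$ is not well defined on the quotient. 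The invariant quantity is the left coset $\Lambda\gamma_h$, or equivalently $\gamma_h^{-1}\Lambda\in\Gamma/\Lambda$. With $q\coloneqq\gamma_h^{-1}\Lambda$ the map $h\mapsto(f_h,q)$ is a well-defined bimeasurable bijection, and the action computation then goes through cleanly: $h\circ g^{-1}$ has $\Gamma$-coordinate $(h\circ g^{-1})(1_\Gamma)=\gamma_h\, f_h(g^{-1}(1_\Gamma))$, hence new $q$-coordinate
\[
\bigl(\gamma_h\, f_h(g^{-1}(1_\Gamma))\bigr)^{-1}\Lambda \;=\; f_h(g^{-1}(1_\Gamma))^{-1}\,\gamma_h^{-1}\Lambda \;=\; f_h(g^{-1}(1_\Gamma))^{-1}\,q,
\]
as required. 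Relatedly, your intermediate line ``$gh=(g\gamma)\cdot f$'' conflates group multiplication in $\Isom(\Gamma)$ with the action on the quotient: the relevant action is $h\mapsto h\circ g^{-1}$, not $h\mapsto g\circ h$ (the latter does not even commute with the $\Lambda$-postcomposition you are quotienting by). Once you replace $\gamma_h\Lambda$ by $\gamma_h^{-1}\Lambda$ and use the correct action, the rest of your outline is exactly the paper's argument.
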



\begin{proof} We fix a section $s : \Gamma/\Lambda\to\Gamma$. For all $f\in\Isom_1(\Gamma)$ and $q\in\Gamma/\Lambda$, we let $\psi_{f,q}\in\Isom(\Gamma)$ be the map defined by $\psi_{f,q}(\gamma)\coloneqq s(q)f(\gamma)$. We define the subset $D\subseteq\Isom(\Gamma)$ by
\[D\coloneqq\{\psi_{f,q}\colon f\in\Isom_1(\Gamma), q\in \Gamma/\Lambda\}.\]
\begin{claim*} The set $D$ is a fundamental domain for $\Isom(\Gamma)/\Lambda$, that is, for all $f\in\Isom(\Gamma)$, there exists a unique $\lambda\in\Lambda$ such that the map $\gamma\mapsto \lambda\inv f(\gamma)$ belongs to $D$.  
\end{claim*}
\begin{cproof}
Let $f\in\Isom(\Gamma)$. Let $\delta=f(1_\Gamma)$. Then there exists $\lambda\in\Lambda$ and $q\in\Gamma/\Lambda$ such that $\delta=\lambda s(q)$. Observe that $\gamma\mapsto \delta\inv f(\gamma)$ belongs to $\Isom_1(\Gamma)$. Then the map $\gamma\mapsto \lambda\inv f(\gamma)$ belongs to $D$, because it coincides with $\psi_{\delta\inv f,q}$. 
\end{cproof}
This yields an action $\Isom(\Gamma)\curvearrowright D$, defined for all $g\in\Isom(\Gamma)$ and $\psi\in D$ by 
\[(g,\psi)\mapsto (\gamma\mapsto \lambda\inv \psi(g\inv(\gamma))),\]
where $\lambda$ is the unique element of $\Lambda$ such that the map $\gamma\mapsto \lambda\inv \psi(g\inv(\gamma))$ belongs to $D$. If we denote by $\mu_D$ the Haar measure on $\Isom(\Gamma)$ which satisfies $\mu_D(D)=1$, then the action $\Isom(\Gamma)\curvearrowright D$ preserves $\mu_D$ and is measurably isomorphic to the \pmp{} action $\Isom(\Gamma)\curvearrowright (\Isom(\Gamma)/\Lambda,m_\Lambda)$. 

Let us prove that the action $\Isom(\Gamma)\curvearrowright (D,\mu_D)$ is measurably isomorphic to the action $\Isom(\Gamma)\curvearrowright (\Isom_1(\Gamma)\times\Gamma/\Lambda,\mu\otimes u)$ defined for $g\in\Isom(\Gamma)$ and $(f,q)\in\Isom_1(\Gamma)\times\Gamma/\Lambda$ by
\[g(f,q)\coloneqq (g\cdot f,f(g\inv(1_\Gamma))\inv q).\]
Let us define a map $\Phi:\Isom_1(\Gamma)\times\Gamma/\Lambda\to D$ by the formula
\[\Phi(f,q)\coloneqq \gamma\mapsto \psi_{f,q}(\gamma).\]
This is a bijection, as it is a surjective map by definition of $D$ and it is straightforward to check that it is an injective map. Moreover, by definition of $\mu$ and $\mu_D$, we get $\Phi_*(\mu\otimes u)=\mu_D$. It is a straightforward computation to check that $\Phi$ intertwines the actions $\Isom(\Gamma)\curvearrowright \Isom_1(\Gamma)\times\Gamma/\Lambda$ and $\Isom(\Gamma)\curvearrowright D$, which finishes the proof of the lemma. 
\end{proof}

\begin{thm}\label{thm.constructionisomOE} Let $\Gamma=\langle S_\Gamma\rangle$ be a finitely generated group. Let $\Lambda\leq \Gamma$ be a finite index subgroup. Let $\mu_\Gamma$ be the Haar probability measure on $\Isom(\Gamma)/\Gamma$ and $u$ be the uniform probability measure on $\Gamma/\Lambda$. Then the \pmp{} action $\Gamma\curvearrowright (\Isom(\Gamma)/\Lambda,m_\Lambda)$ is isometric orbit equivalent to the diagonal action $\Gamma\curvearrowright (\Isom(\Gamma)/\Gamma\times \Gamma/\Lambda,m_\Gamma\otimes u)$. 
\end{thm}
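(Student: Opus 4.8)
The plan is to exhibit a length-preserving orbit equivalence between the two $\Gamma$-actions and then invoke Lemma~\ref{lem.isomOEnonfree}. The natural candidate comes from restricting to $\Gamma\leq\Isom(\Gamma)$ the measurable isomorphism established in Lemma~\ref{lem.actionquotient}. Concretely, Lemma~\ref{lem.actionquotient} identifies $\Isom(\Gamma)\curvearrowright(\Isom(\Gamma)/\Lambda,m_\Lambda)$ with $\Isom(\Gamma)\curvearrowright(\Isom_1(\Gamma)\times\Gamma/\Lambda,\mu\otimes u)$ via $g(f,q)=(g\cdot f,\ f(g\inv(1_\Gamma))\inv q)$. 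Restricting $g$ to $\gamma\in\Gamma$, one has $\gamma\inv(1_\Gamma)=\gamma\inv$ (left multiplication), so the restricted action is $\gamma(f,q)=(\gamma\cdot f,\ f(\gamma\inv)\inv q)$, where $(\gamma\cdot f)(\delta)=f(\gamma\inv)\inv f(\gamma\inv\delta)$ is exactly the $\Gamma$-action on $\Isom_1(\Gamma)$ from Remark~\ref{rmq.randisometry}. So the target action $\Gamma\curvearrowright(\Isom(\Gamma)/\Lambda,m_\Lambda)$ is measurably isomorphic to $\Gamma\curvearrowright(\Isom_1(\Gamma)\times\Gamma/\Lambda,\mu\otimes u)$ with this formula. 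Similarly, applying Lemma~\ref{lem.actionquotient} with $\Lambda$ replaced by $\Gamma$ itself shows that the first factor $\Gamma\curvearrowright(\Isom(\Gamma)/\Gamma,m_\Gamma)$ is measurably isomorphic to $\Gamma\curvearrowright(\Isom_1(\Gamma),\mu)$ with the Remark~\ref{rmq.randisometry} action, and the diagonal action $\Gamma\curvearrowright(\Isom(\Gamma)/\Gamma\times\Gamma/\Lambda,m_\Gamma\otimes u)$ becomes the \emph{diagonal} action $\gamma(f,q)=(\gamma\cdot f,\ \gamma q)$ on $\Isom_1(\Gamma)\times\Gamma/\Lambda$.

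The problem thus reduces to showing that the two $\Gamma$-actions on $X\coloneqq\Isom_1(\Gamma)\times\Gamma/\Lambda$ with the same underlying measure $\mu\otimes u$, namely
\[
\alpha:\ \gamma(f,q)=(\gamma\cdot f,\ f(\gamma\inv)\inv q)
\qquad\text{and}\qquad
\beta:\ \gamma(f,q)=(\gamma\cdot f,\ \gamma q),
\]
are isometric orbit equivalent. First I would observe that $\alpha$ and $\beta$ have \emph{the same orbits}: for fixed $f$, the bijection $q\mapsto f(\,\cdot\,)$ intertwines the two coordinate actions on $\Gamma/\Lambda$ — more precisely, one checks that $f(\gamma\inv)\inv q$ and $\gamma q'$ describe the same orbit because postcomposition by elements of $\Lambda$ is absorbed in the quotient and $f$, being an isometry fixing $1_\Gamma$, carries the $\Gamma$-action on cosets to the twisted one. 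The cleanest route is to define $\Phi:X\to X$ directly. A good guess is $\Phi(f,q)=(f,\ f(s(q))\Lambda)$ or a similarly simple formula built from a section $s:\Gamma/\Lambda\to\Gamma$, designed so that $\Phi$ conjugates $\beta$ into $\alpha$ up to a cocycle valued in $\Gamma$ acting through $f$. Then the orbit equivalence cocycle $\sigma:\Gamma\times X\to\Gamma$ reads off as $\sigma(\gamma,(f,q))=f(\gamma\inv)\inv\cdot(\text{correction in }\Lambda)$, and the point is that its word length equals $\lvert\gamma\rvert_{S_\Gamma}$ \emph{on the nose}, because $f$ is an isometry of the Cayley graph fixing $1_\Gamma$, so $\lvert f(\gamma\inv)\rvert_{S_\Gamma}=\lvert f(\gamma\inv)\inv f(1_\Gamma)\rvert_{S_\Gamma}=\lvert\gamma\inv\rvert_{S_\Gamma}=\lvert\gamma\rvert_{S_\Gamma}$. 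The same computation handles the reverse cocycle $\tau$. Feeding $\Phi$, $\sigma$, $\tau$ into Lemma~\ref{lem.isomOEnonfree} yields the conclusion.

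The main obstacle is pinning down the correct formula for $\Phi$ and verifying the intertwining identities of Lemma~\ref{lem.isomOEnonfree}, including the compatibility condition between $\sigma$ and $\tau$; the bookkeeping with the section $s$ and the $\Lambda$-corrections (the coset representative one picks when writing $\delta=\lambda s(q)$, as in the claim inside the proof of Lemma~\ref{lem.actionquotient}) is where errors creep in, and one must be careful that the corrections genuinely land in $\Lambda$ and therefore do not affect the $\Gamma/\Lambda$-coordinate. Everything else — measure preservation of $\Phi$ (immediate since both sides carry $\mu\otimes u$ and $\Phi$ permutes the second coordinate fibrewise by a measure-preserving bijection of the finite set $\Gamma/\Lambda$ depending measurably on $f$), the cocycle identity for $\sigma$ and $\tau$, and the length-preservation — is a routine computation once the formula is in place. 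I would also remark that it is not necessary to pass through $\Isom_1$: one could instead work directly on $\Isom(\Gamma)/\Lambda$ using the fundamental domain $D$ from Lemma~\ref{lem.actionquotient}, but the $\Isom_1\times\Gamma/\Lambda$ model makes the length-preservation transparent, which is the only subtle point.
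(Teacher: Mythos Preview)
Your reduction via Lemma~\ref{lem.actionquotient} to the two $\Gamma$-actions
\[
\alpha:\ \gamma(f,q)=(\gamma\cdot f,\ f(\gamma\inv)\inv q),
\qquad
\beta:\ \gamma(f,q)=(\gamma\cdot f,\ \gamma q)
\]
on $(\Isom_1(\Gamma)\times\Gamma/\Lambda,\mu\otimes u)$ is exactly right, and the strategy of exhibiting a length-preserving orbit equivalence and invoking Lemma~\ref{lem.isomOEnonfree} is the paper's strategy. The gap is in the choice of $\Phi$.

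You propose $\Phi(f,q)=(f,\phi_f(q))$, i.e.\ fixing the first coordinate and permuting the second fibrewise; you also assert that $\alpha$ and $\beta$ have the same orbits. Both points fail. If $\Phi$ fixes the first coordinate, then matching first coordinates in $\Phi(\alpha(\gamma)(f,q))=\beta(\sigma)\Phi(f,q)$ forces $\gamma\cdot f=\sigma\cdot f$; since the $\Gamma$-action on $\Isom_1(\Gamma)\cong\Isom(\Gamma)/\Gamma$ is essentially free whenever $\Isom(\Gamma)$ is nondiscrete (e.g.\ for $\Gamma=\F_d$), this gives $\sigma=\gamma$. Your $\Phi$ would then be a \emph{measurable isomorphism} between $\alpha$ and $\beta$, contradicting Corollary~\ref{cor.isomOEflexibility}, where one action is mixing and the other is not. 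For the same reason the orbits of $\alpha$ and $\beta$ do \emph{not} coincide: with trivial stabilizer, equality of orbits through $(f,q)$ would require $\gamma\inv f(\gamma\inv)\inv\in\mathrm{Stab}(q)$ for all $\gamma$, i.e.\ that $f$ respects $\Lambda$-cosets, which generic $f\in\Isom_1(\Gamma)$ does not. So no formula of the shape $(f,\phi_f(q))$, with or without a section $s$, can work.

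The paper's $\Psi$ acts on the \emph{first} coordinate instead: $\Psi(f,q)\coloneqq(f\inv,q)$. The crucial identity, supplied by Lemma~\ref{lem.naturalisomOE} (with $\Lambda=\Gamma$), is that the inverse map on $\Isom_1(\Gamma)$ satisfies $(\gamma\cdot f)\inv=f(\gamma\inv)\inv\cdot f\inv$. This gives
\[
\Psi(\gamma*(f,q))=\bigl((\gamma\cdot f)\inv,\ f(\gamma\inv)\inv q\bigr)=f(\gamma\inv)\inv\star\Psi(f,q),
\]
so the cocycle is $\sigma(\gamma,(f,q))=f(\gamma\inv)\inv$, which is length-preserving precisely by your computation $\lvert f(\gamma\inv)\inv\rvert_{S_\Gamma}=\lvert\gamma\rvert_{S_\Gamma}$. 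The reverse cocycle is handled symmetrically (note $\Psi=\Psi\inv$), and Lemma~\ref{lem.isomOEnonfree} finishes. In short: your length computation and overall architecture are correct, but the isometric orbit equivalence comes from inverting $f$, not from rearranging $q$.
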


\begin{proof} In this proof, we will denote by $*$ the \pmp{} action $\Gamma\curvearrowright (\Isom(\Gamma)/\Lambda,m_\Lambda)$ and by $\star$ the diagonal action $\Gamma\curvearrowright (\Isom(\Gamma)/\Gamma\times\Gamma/\Lambda,m_\Gamma\otimes u)$. 


Let $u$ be the uniform probability measure on $\Gamma/\Lambda$. We know by Lemma \ref{lem.actionquotient} that $*$ is measurably isomorphic to the action $\Gamma\curvearrowright (\Isom_1(\Gamma)\times\Gamma/\Lambda,\mu\otimes u)$, still denoted by $*$ and given by
\[\gamma*(f,q)\coloneqq (\gamma\cdot f,f(\gamma\inv)\inv q).\]
Let $\Phi : \Isom_1(\Gamma)\to\Isom_1(\Gamma)$ be the inverse map, defined by $f\circ \Phi(f)=\Phi(f)\circ f=\id_\Gamma$. This is a \pmp{} isomorphism by Lemma \ref{lem.naturalisomOE}. Morever, we have $\Phi(\gamma\cdot f)=f(\gamma\inv)\inv\Phi(f)$. 
Therefore, if $\Psi : \Isom_1(\Gamma)\times\Gamma/\Lambda\to \Isom_1(\Gamma)\times\Gamma/\Lambda$ is defined by $\Psi(f,q)=(\Phi(f),q)$, then $\Psi$ is a \pmp{} isomorphism which satisfies  
\begin{align*}
&\Psi(\gamma* (f,q))=f(\gamma\inv)\inv\star\Psi(f,q),\\
&\Psi(f(\gamma\inv)\inv *\Psi(f,q))=\gamma\star (f,q).
\end{align*}
By Lemma \ref{lem.naturalisomOE}, the map $(\gamma,f)\mapsto f(\gamma\inv)\inv$ is a length-preserving cocycle, thus we obtain by Lemma \ref{lem.isomOEnonfree} that the \pmp{} actions $*$ and $\star$ are isometric orbit equivalent, which concludes the proof. 
\end{proof}

In Section \ref{sec.concreteexample}, we give a concrete illustration of Theorem \ref{thm.constructionisomOE} with $\Gamma=\F_2$, the free group on two generators. 

\subsection{The case of the free group}\label{sec.freegroupmixing}

In this section, we characterize the subgroups $\Lambda\leq\F_d$ for which the \pmp{} action $\F_d\curvearrowright (\Isom(\F_d)/\Lambda,m_\Lambda)$ is mixing. Here, $m_\Lambda$ denotes the Haar probability measure. Before this, we need to give some properties of mixing actions of locally compact groups.

Let $G$ be a locally compact, non-compact, second countable group. A function $f:G\to\C$ vanishes at infinity if for all $\varepsilon >0$, the set $\{g\in G\colon \lvert f(g)\rvert \geq\varepsilon \}$ is compact. A \pmp{} action $G\curvearrowright (X,\mu)$ is \defin{mixing} if for all measurable subsets $A,B\subseteq X$, the function 
\[g\mapsto \lvert \mu(gA\cap B)-\mu(A)\mu(B)\rvert\]
vanishes at infinity. With this definition, the proofs of the following two lemmas are straightforward.

\begin{lem}\label{lem.subgroupmixing} Let $G\curvearrowright (X,\mu)$ be a \pmp{} mixing action. Then for any closed subgroup $H\leq G$, the action $H\curvearrowright (X,\mu)$ is mixing. 
\end{lem}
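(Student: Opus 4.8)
The plan is to unwind the definitions and reduce everything to the classical fact that mixing for a $G$-action restricts to mixing for any closed subgroup. First I would fix a \pmp{} mixing action $G\curvearrowright(X,\mu)$ and a closed subgroup $H\leq G$. Since $H$ is closed in the locally compact, second countable group $G$, it is itself locally compact and second countable, so the notion of ``vanishing at infinity'' makes sense for functions on $H$. The only subtlety is the case distinction hidden in the definition: if $H$ is compact, then ``mixing'' for $H$ is vacuous (every function on a compact group trivially vanishes at infinity, as the whole group is compact), so there is nothing to prove; hence I may assume $H$ is non-compact, which is the hypothesis needed to talk about mixing of an $H$-action.

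Next I would take arbitrary measurable subsets $A,B\subseteq X$ and consider the function $\varphi:G\to\C$, $\varphi(g)=\lvert\mu(gA\cap B)-\mu(A)\mu(B)\rvert$, which vanishes at infinity on $G$ by the mixing hypothesis. The key observation is that the restriction $\varphi|_H$ vanishes at infinity on $H$: for $\eps>0$, the set $\{h\in H\colon\varphi(h)\geq\eps\}$ equals $H\cap\{g\in G\colon\varphi(g)\geq\eps\}$, which is the intersection of the closed subgroup $H$ with a compact subset of $G$, hence compact in $G$ and therefore compact in $H$ (compactness is absolute, and $H$ carries the subspace topology). Since $\mu(hA\cap B)$ computed in the $H$-action agrees with the value computed in the ambient $G$-action, this says exactly that $h\mapsto\lvert\mu(hA\cap B)-\mu(A)\mu(B)\rvert$ vanishes at infinity on $H$. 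As $A,B$ were arbitrary, the action $H\curvearrowright(X,\mu)$ is mixing.

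I do not expect any real obstacle here; the statement is essentially a bookkeeping exercise, and indeed the excerpt itself says ``the proofs of the following two lemmas are straightforward.'' The only point that deserves a word of care is making sure the topology on $H$ is the subspace topology (so that compactness and ``vanishing at infinity'' transfer correctly) and noting that the measure-theoretic quantities are literally the same in both actions since the $H$-action is just the restriction of the $G$-action. If one wanted to be fully rigorous about the compact case one would simply remark that the definition of mixing for $H$ presupposes $H$ non-compact, so that case is outside the scope of the statement, or alternatively observe that a compact $H$ acting in a measure-preserving way is trivially mixing under the vanishing-at-infinity formulation.

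\begin{proof}
We may assume $H$ is non-compact (otherwise there is nothing to prove, or the statement is vacuous). Fix measurable subsets $A,B\subseteq X$ and set $\varphi(g)\coloneqq\lvert\mu(gA\cap B)-\mu(A)\mu(B)\rvert$ for $g\in G$. By hypothesis $\varphi$ vanishes at infinity on $G$. Since $H$ is a closed subgroup of $G$, it is locally compact and second countable in the subspace topology, and the $H$-action on $(X,\mu)$ is the restriction of the $G$-action, so the relevant quantity $\mu(hA\cap B)$ for $h\in H$ agrees with $\varphi(h)$. For any $\eps>0$, the set
\[\{h\in H\colon\varphi(h)\geq\eps\}=H\cap\{g\in G\colon\varphi(g)\geq\eps\}\]
is the intersection of the closed set $H$ with a compact subset of $G$, hence is compact in $G$, and therefore compact in $H$ as well. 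Thus $h\mapsto\lvert\mu(hA\cap B)-\mu(A)\mu(B)\rvert$ vanishes at infinity on $H$. Since $A$ and $B$ were arbitrary, the action $H\curvearrowright(X,\mu)$ is mixing.
\end{proof}
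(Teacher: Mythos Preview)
Your proof is correct and is exactly the straightforward verification the paper alludes to (the paper does not write out a proof, only notes that it is immediate from the definition). The one-line content is that the restriction to a closed subgroup of a function vanishing at infinity again vanishes at infinity, which you handle cleanly via the closed-intersect-compact argument.
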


\begin{lem}\label{lem.fisubgroupmixing} Let $H\leq G$ be a finite index closed subgroup. Let $G\curvearrowright (X,\mu)$ be a \pmp{} action. If $H\curvearrowright (X,\mu)$ is mixing, then so is $G\curvearrowright (X,\mu)$.
\end{lem}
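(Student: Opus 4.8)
The plan is to prove that if $H \leq G$ is a finite-index closed subgroup and $H \curvearrowright (X,\mu)$ is mixing, then $G \curvearrowright (X,\mu)$ is mixing. Fix measurable subsets $A, B \subseteq X$; I need to show the function $\phi(g) \coloneqq \lvert \mu(gA \cap B) - \mu(A)\mu(B) \rvert$ vanishes at infinity on $G$, i.e.\ that for every $\eps > 0$ the set $\{g \in G : \phi(g) \geq \eps\}$ is compact.

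First I would choose a set of coset representatives $g_1, \dots, g_n \in G$ for $G/H$ (so $G = \bigsqcup_{i=1}^n g_i H$), which is a finite set since $[G:H] = n < \infty$. Each coset $g_i H$ is a closed (indeed clopen, as $H$ is finite index and closed, hence open) subset of $G$. The key observation is that for $g = g_i h$ with $h \in H$, we have $gA = g_i(hA)$, so $\mu(gA \cap B) = \mu(hA \cap g_i^{-1}B)$, where $g_i^{-1}B$ is a fixed measurable set depending only on $i$. Thus $\phi(g_i h) = \lvert \mu(h A \cap g_i^{-1}B) - \mu(A)\mu(g_i^{-1}B) \rvert$, using that $\mu$ is $G$-invariant so $\mu(g_i^{-1}B) = \mu(B)$. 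By the mixing assumption for $H$ applied to the pair $(A, g_i^{-1}B)$, the function $h \mapsto \lvert \mu(hA \cap g_i^{-1}B) - \mu(A)\mu(g_i^{-1}B)\rvert$ vanishes at infinity on $H$, so the set $K_i \coloneqq \{h \in H : \phi(g_i h) \geq \eps\}$ is compact in $H$, hence compact in $G$.

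Then $\{g \in G : \phi(g) \geq \eps\} = \bigcup_{i=1}^n g_i K_i$, a finite union of compact sets (each $g_i K_i$ is compact as the image of the compact set $K_i$ under the continuous map $h \mapsto g_i h$), hence compact. This establishes that $\phi$ vanishes at infinity, so $G \curvearrowright (X,\mu)$ is mixing. One minor point to handle: if $G$ happens to be compact the statement is vacuous (or trivially interpreted), and we should note that $H$ being finite index in a non-compact second countable locally compact $G$ is again non-compact second countable, so the definition of mixing applies to $H$ as stated.

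The proof is essentially a bookkeeping argument, and I do not expect a serious obstacle; the only thing to be careful about is making sure the $G$-invariance of $\mu$ is used correctly to reduce the computation on each coset to a mixing statement for $H$ with a translated second set, and that "compact in $H$ implies compact in $G$" is valid, which holds because $H$ is closed in $G$.
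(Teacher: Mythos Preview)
Your proof is correct. The paper does not actually write out a proof of this lemma; it simply states that ``the proofs of the following two lemmas are straightforward.'' Your coset-decomposition argument is precisely the intended straightforward verification, and all the small points you flag (compactness in $H$ versus in $G$, $H$ non-compact when $G$ is, invariance of $\mu$ under $g_i^{-1}$) are handled correctly.
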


In this section, we fix an integer $d\geq 2$ and we let $\F_d$ be the free group on $d$ generators $x_1,\dots,x_d$ with the generating system $S\coloneqq\{a_1^{\pm 1},\dots,a_d^{\pm 1}\}$. The \defin{even subgroup} of $\F_d$ is the normal subgroup of index two, denoted by $\F_d^{ev}$, consisting of all elements $\gamma\in \F_d$ such that $\lvert\gamma\rvert_S$ is even. The \defin{even subgroup} of $\Isom(\F_d)$ is the closed, normal subgroup of index two of $\Isom(\F_d)$, denoted by $\Isom^{ev}(\F_d)$ and defined by

\[\Isom^{ev}(\F_d)\coloneqq\{f\in\Isom(\F_d)\colon f(\F_d^{ev})=\F_d^{ev}\}.\]
This group satisfies the Howe-Moore property.

\begin{thm}[Lubotzky-Mozes \cite{lubotzky}, Pemantle \cite{pemantle}]\label{thm.lubotzkymozes} Any \pmp{} ergodic action of $\Isom^{ev}(\F_d)$ on a standard probability space is mixing. 
\end{thm}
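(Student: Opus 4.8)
The plan is to derive this statement from the abstract Howe-Moore property for $\Isom^{ev}(\F_d)$, which is exactly the content of the cited works of Lubotzky-Mozes and Pemantle; everything else is the routine translation from unitary representations to dynamics. Recall that $\Isom(\F_d)=\Isom(\Cay(\F_d,S))$ is the automorphism group of the $2d$-regular tree $\mathcal T\coloneqq\Cay(\F_d,S)$, a non-compact (it contains hyperbolic automorphisms translating along axes), second countable (since $\mathcal T$ is countable), totally disconnected locally compact group, and that $\Isom^{ev}(\F_d)$ is its index-two subgroup preserving the bipartition of $\mathcal T$, so it inherits these properties. Writing $G\coloneqq\Isom^{ev}(\F_d)$, the Howe-Moore property asserts that every strongly continuous unitary representation of $G$ with no nonzero $G$-invariant vector has all its matrix coefficients vanishing at infinity, i.e.\ is a $C_0$-representation.

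Granting this, I would take a \pmp{} ergodic action $G\curvearrowright(X,\mu)$ and consider the associated Koopman representation $\pi$ on $L^2(X,\mu)$, given by $(\pi(g)\xi)(x)=\xi(g\inv x)$; it is strongly continuous since the action is \pmp{} on a standard probability space. It preserves the orthogonal decomposition $L^2(X,\mu)=\C\mathbf 1_X\oplus L^2_0(X,\mu)$, where $L^2_0(X,\mu)=\{\xi\colon\int_X\xi\,d\mu=0\}$, and ergodicity of the action is precisely the statement that $\pi$ has no nonzero $G$-invariant vector in $L^2_0(X,\mu)$. Applying the Howe-Moore property to the restriction of $\pi$ to $L^2_0(X,\mu)$ then gives that $g\mapsto\langle\pi(g)\xi,\eta\rangle$ vanishes at infinity for all $\xi,\eta\in L^2_0(X,\mu)$.

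To finish, I would fix measurable subsets $A,B\subseteq X$ and set $\xi\coloneqq\mathbf 1_A-\mu(A)\mathbf 1_X$ and $\eta\coloneqq\mathbf 1_B-\mu(B)\mathbf 1_X$, both of which lie in $L^2_0(X,\mu)$. Using $\pi(g)\mathbf 1_X=\mathbf 1_X$, $\langle\pi(g)\mathbf 1_A,\mathbf 1_B\rangle=\mu(gA\cap B)$ and $\mu(gA)=\mu(A)$, one computes
\[\mu(gA\cap B)-\mu(A)\mu(B)=\langle\pi(g)\xi,\eta\rangle\qquad\text{for all }g\in G,\]
so $g\mapsto\lvert\mu(gA\cap B)-\mu(A)\mu(B)\rvert$ vanishes at infinity, which is exactly the definition of mixing recalled just before the statement; hence $G\curvearrowright(X,\mu)$ is mixing.

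The only genuinely substantial input is the Howe-Moore property itself, and I would cite it rather than reprove it: in the tree setting it relies on a Mautner-type phenomenon — a vector fixed by a horospherical subgroup of $\Aut(\mathcal T)$ is automatically fixed by a contracting hyperbolic element, hence by ever larger subgroups, and then, using topological simplicity of $\Isom^{ev}(\F_d)$, by the whole group — and this is precisely the work done in \cite{lubotzky} and \cite{pemantle}. The main thing to be careful about when writing the short argument above is to record that $G$ is non-compact and second countable, so that "vanishing at infinity" is the right notion and the abstract Howe-Moore statement applies as formulated.
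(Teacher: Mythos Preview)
The paper does not give its own proof of this theorem: it is stated as a citation of Lubotzky--Mozes and Pemantle and used as a black box in the proof of Theorem~\ref{thm.notcontained}. Your proposal correctly identifies that the cited works establish the Howe--Moore property for $\Isom^{ev}(\F_d)$ (vanishing at infinity of matrix coefficients for representations without invariant vectors), and your translation via the Koopman representation on $L^2_0(X,\mu)$ is the standard and correct way to pass from that statement to ``ergodic $\Rightarrow$ mixing''; there is nothing to compare here beyond noting that the paper simply invokes the result rather than rederiving it.
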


The following lemma characterizes the finite index subgroups of the even subgroup $\F_d^{ev}$.

\begin{lem}\label{lem.even} Let $\Lambda\leq \F_d$ be a finite index subgroup. Then the following are equivalent. 
\begin{enumerate}[(i)]
\item\label{item.notcontained} $\Lambda$ is not contained in $\F_d^{ev}$.
\item\label{item.index} For all $\gamma\in \F_d$, the index $[\F_d^{ev}:\gamma\Lambda\gamma\inv\cap\F_d^{ev}]$ is equal to $[\F_d:\Lambda]$. 
\item\label{item.transitive} The action $\F_d^{ev}\curvearrowright \F_d/\Lambda$ is transitive.
\item\label{item.bipartition} There is no bipartition $\F_d/\Lambda=U\sqcup V$ such that for all $s\in S$, $sU=V$. 
\end{enumerate}
\end{lem}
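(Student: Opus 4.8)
The plan is to prove the cycle of implications $(i)\Rightarrow(ii)\Rightarrow(iii)\Rightarrow(iv)\Rightarrow(i)$, or some convenient reordering, keeping in mind that $\mathbf{F}_d^{ev}$ is a normal subgroup of index two, so for any subgroup $\Lambda\leq\mathbf{F}_d$ we have the dichotomy: either $\Lambda\subseteq\mathbf{F}_d^{ev}$, or $\Lambda\cdot\mathbf{F}_d^{ev}=\mathbf{F}_d$, in which case $[\Lambda:\Lambda\cap\mathbf{F}_d^{ev}]=2$. This elementary dichotomy, applied also to conjugates $\gamma\Lambda\gamma\inv$ (which still have index $[\mathbf{F}_d:\Lambda]$ and are contained in $\mathbf{F}_d^{ev}$ if and only if $\Lambda$ is, since $\mathbf{F}_d^{ev}$ is normal), will be the workhorse.

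First I would establish $(i)\Leftrightarrow(iii)$ directly. The action $\mathbf{F}_d\curvearrowright\mathbf{F}_d/\Lambda$ is transitive by definition, and $\mathbf{F}_d^{ev}$ has index two, so the $\mathbf{F}_d^{ev}$-orbits on $\mathbf{F}_d/\Lambda$ partition it into either one or two blocks (the orbit of the trivial coset $\Lambda$ together with its translate by any fixed odd generator). There is exactly one orbit precisely when some odd element of $\mathbf{F}_d$ fixes the coset $\Lambda$, i.e.\ when $\gamma\Lambda=\Lambda$ for some $\gamma\notin\mathbf{F}_d^{ev}$, i.e.\ when $\Lambda\not\subseteq\mathbf{F}_d^{ev}$. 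This gives $(i)\Leftrightarrow(iii)$ cleanly. Next, $(iii)\Leftrightarrow(iv)$: the generating set $S$ consists entirely of odd elements, so each $s\in S$ swaps the two $\mathbf{F}_d^{ev}$-orbits when there are two; thus a bipartition $\mathbf{F}_d/\Lambda=U\sqcup V$ with $sU=V$ for all $s\in S$ exists if and only if the (connected) Schreier graph is bipartite, if and only if the two sides $U,V$ are exactly the two $\mathbf{F}_d^{ev}$-orbits, which happens precisely when the $\mathbf{F}_d^{ev}$-action is intransitive. I should be slightly careful to note the Schreier graph is connected, so the bipartition, if it exists, is unique and forced to be the even/odd distance classes.

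Then for $(i)\Leftrightarrow(ii)$ I would argue as follows. For $\gamma\in\mathbf{F}_d$, count cosets: $[\mathbf{F}_d:\Lambda]=[\mathbf{F}_d:\mathbf{F}_d^{ev}]\cdot[\mathbf{F}_d^{ev}:\gamma\Lambda\gamma\inv\cap\mathbf{F}_d^{ev}]/[\gamma\Lambda\gamma\inv:\gamma\Lambda\gamma\inv\cap\mathbf{F}_d^{ev}]$ — more transparently, use that $[\mathbf{F}_d:\gamma\Lambda\gamma\inv]=[\mathbf{F}_d:\Lambda]$ and the index-multiplicativity through the chain $\gamma\Lambda\gamma\inv\cap\mathbf{F}_d^{ev}\leq\mathbf{F}_d^{ev}\leq\mathbf{F}_d$ versus $\gamma\Lambda\gamma\inv\cap\mathbf{F}_d^{ev}\leq\gamma\Lambda\gamma\inv\leq\mathbf{F}_d$, giving $2\cdot[\mathbf{F}_d^{ev}:\gamma\Lambda\gamma\inv\cap\mathbf{F}_d^{ev}]=[\mathbf{F}_d:\Lambda]\cdot[\gamma\Lambda\gamma\inv:\gamma\Lambda\gamma\inv\cap\mathbf{F}_d^{ev}]$. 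Since the last index is $1$ or $2$ according to whether $\gamma\Lambda\gamma\inv\subseteq\mathbf{F}_d^{ev}$ or not, and since $\gamma\Lambda\gamma\inv\subseteq\mathbf{F}_d^{ev}\Leftrightarrow\Lambda\subseteq\mathbf{F}_d^{ev}$ by normality of $\mathbf{F}_d^{ev}$, we get: $[\mathbf{F}_d^{ev}:\gamma\Lambda\gamma\inv\cap\mathbf{F}_d^{ev}]=[\mathbf{F}_d:\Lambda]$ for all $\gamma$ iff $\Lambda\not\subseteq\mathbf{F}_d^{ev}$, which is exactly $(i)\Leftrightarrow(ii)$. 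I do not anticipate a genuinely hard step here; the only thing to be careful about is bookkeeping with index-multiplicativity for possibly non-normal subgroups (which is still valid for the plain index of nested subgroups), and making sure the Schreier graph connectivity is invoked so that "bipartite" means what we want. The conceptual core — the index-two dichotomy and the fact that $S$ consists of odd elements — is elementary, so the proof should be short and self-contained.
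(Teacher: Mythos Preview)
Your proposal is correct and uses essentially the same ingredients as the paper's proof: the index-two dichotomy for $\mathbf{F}_d^{ev}$, its normality, index multiplicativity, and the identification of the bipartition with the two $\mathbf{F}_d^{ev}$-orbits. The only difference is organizational --- the paper proves the cycle $(i)\Rightarrow(ii)\Rightarrow(iii)\Rightarrow(iv)\Rightarrow(i)$ (using orbit--stabilizer for $(ii)\Rightarrow(iii)$), whereas you establish the three biconditionals $(i)\Leftrightarrow(iii)$, $(iii)\Leftrightarrow(iv)$, $(i)\Leftrightarrow(ii)$ directly; your route to $(i)\Leftrightarrow(iii)$ is marginally more direct, but the arguments are otherwise interchangeable.
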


\begin{proof} Let us prove \eqref{item.notcontained}$\Rightarrow$\eqref{item.index}. Since $\Lambda$ is not contained in $\F_d^{ev}$, there is $\lambda\in\Lambda\setminus\F_d^{ev}$. Let $\gamma\in \F_d$. Since $\F_d^{ev}$ is normal in $\F_d$, the element $\gamma\lambda\gamma\inv$ is not in $\F_d^{ev}$. Since $\F_d^{ev}$ has index two in $\F_d$, we deduce that $\gamma\Lambda\gamma\inv\F_d^{ev}=\F_d$. Thus, we obtain
\[[\F_d:\gamma\Lambda\gamma\inv\cap\F_d^{ev}]=[\F_d:\gamma\Lambda\gamma\inv][\F_d:\F_d^{ev}].\]
We obtain \eqref{item.index} by dividing both sides of the equality by $[\F_d:\F_d^{ev}]$. 

We now prove \eqref{item.index}$\Rightarrow$\eqref{item.transitive}. Observe that for all $\gamma\in \F_d$, the group $\gamma\Lambda\gamma\inv\cap\F_d^{ev}$ is equal to the stabilizer of the coset $\gamma\Lambda$ under the action $\F_d^{ev}\curvearrowright \F_d/\Lambda$. Thus, the index $[\F_d^{ev}:\gamma\Lambda\gamma\inv\cap\F_d^{ev}]$ is equal to the cardinal of the orbit of the coset $\gamma\Lambda$ under the action $\F_d^{ev}\curvearrowright \F_d/\Lambda$. If we assume \eqref{item.index}, then we get that the cardinal of each orbit of the action $\F_d^{ev}\curvearrowright \F_d/\Lambda$ is equal to $[\F_d:\Lambda]$. This exactly means that the action is transitive. 

Let us prove the contrapositive of \eqref{item.transitive}$\Rightarrow$\eqref{item.bipartition}. Assume that there exists a partition $\F_d/\Lambda = U\sqcup V$ such that for all $s\in S$, $sU=V$. Then we also have $sV=U$ for all $s\in S$. By induction, we get that $\gamma U=U$ and $\gamma V=V$ for all $\gamma\in\F_d^{ev}$. Thus, the action $\F_d^{ev}\curvearrowright \F_d/\Lambda$ is not transitive.

Finally, let us prove the contrapositive of \eqref{item.bipartition}$\Rightarrow$\eqref{item.notcontained}. Assume that $\Lambda\leq\F_d^{ev}$. Let $\gamma\in \F_d\setminus \F_d^{ev}$. Let $n\coloneqq [\F_d^{ev}:\Lambda]$. Then there are $\gamma_1,\dots,\gamma_{2n}\in \F_d$ such that 
\[
\F_d^{ev}=\bigsqcup_{i=1}^n\gamma_i\Lambda\text{  and  }
\F_d\setminus\F_d^{ev}=\bigsqcup_{i=n+1}^{2n}\gamma_i\Lambda.
\]
But for all $s\in S$, we have $s\F_d^{ev}=\F_d\setminus\F_d^{ev}$. Thus, this decomposition of $\F_d^{ev}$ and $\F_d\setminus\F_d^{ev}$ into a disjoint union of $\Lambda$-coset yields a bipartition $\F_d/\Lambda = U\sqcup V$ such that for all $s\in S, sU=V$. 
\end{proof}

We can now characterize the subgroups $\Lambda\leqslant\F_d$ for which the \pmp{} action $\F_d\curvearrowright(\Isom(\F_d)/\Lambda,m_\Lambda)$ is mixing.

\begin{thm}\label{thm.notcontained} Let $\Lambda\leq\F_d$ be a finite index subgroup. Let $m_\Lambda$ be the Haar probability measure on $\Isom(\F_d)/\Lambda$. Then the following are equivalent.
\begin{enumerate}[(i)]
\item\label{item.grosmixing} The \pmp{} action $\Isom(\F_d)\curvearrowright (\Isom(\F_d)/\Lambda,m_\Lambda)$ is mixing. 
\item\label{item.petitmixing} The \pmp{} action $\F_d\curvearrowright (\Isom(\F_d)/\Lambda,m_\Lambda)$ is mixing. 
\item\label{item.notcontainedmixing} $\Lambda$ is not contained in the even subgroup $\F_d^{ev}$. 
\end{enumerate}
\end{thm}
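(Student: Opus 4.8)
The plan is to prove the cyclic chain of implications \eqref{item.grosmixing}$\Rightarrow$\eqref{item.petitmixing}$\Rightarrow$\eqref{item.notcontainedmixing}$\Rightarrow$\eqref{item.grosmixing}. The first implication is immediate from Lemma \ref{lem.subgroupmixing}, since $\F_d$ is a closed (discrete) subgroup of $\Isom(\F_d)$. For the second implication, I would argue by contraposition: suppose $\Lambda\leq\F_d^{ev}$. By Lemma \ref{lem.even} (equivalence of \eqref{item.notcontained} and \eqref{item.bipartition}), there is a bipartition $\F_d/\Lambda=U\sqcup V$ with $sU=V$ for every $s\in S$. Transport this through the measurable isomorphism of Lemma \ref{lem.actionquotient}, which identifies $\F_d\curvearrowright(\Isom(\F_d)/\Lambda,m_\Lambda)$ with $\F_d\curvearrowright(\Isom_1(\F_d)\times\F_d/\Lambda,\mu\otimes u)$ under $\gamma(f,q)=(\gamma\cdot f, f(\gamma\inv)\inv q)$. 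The sets $\widetilde U\coloneqq\Isom_1(\F_d)\times U$ and $\widetilde V\coloneqq\Isom_1(\F_d)\times V$ then have the property that any generator $x_i$ moves $\widetilde U$ into $\widetilde V$ in the second coordinate — more precisely, since $|x_i\inv|_S=1$, the element $f(x_i\inv)\inv$ lies in $S$, so it swaps $U$ and $V$. Consequently for any positive-measure $A\subseteq\widetilde U$ and any $\gamma$ of odd word length, $\gamma A\subseteq\widetilde V$, so $\mu\otimes u(\gamma A\cap A)=0$ does not converge to $(\mu\otimes u(A))^2>0$ along the odd-length elements; this contradicts mixing (note odd-length elements form an infinite set, hence leave every finite set).

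For the main implication \eqref{item.notcontainedmixing}$\Rightarrow$\eqref{item.grosmixing}, I would use the Howe-Moore property of $\Isom^{ev}(\F_d)$ (Theorem \ref{thm.lubotzkymozes}) together with Lemmas \ref{lem.subgroupmixing} and \ref{lem.fisubgroupmixing}. First, it suffices to show the action $\Isom^{ev}(\F_d)\curvearrowright(\Isom(\F_d)/\Lambda,m_\Lambda)$ is mixing, since $\Isom^{ev}(\F_d)$ has index two in $\Isom(\F_d)$ and Lemma \ref{lem.fisubgroupmixing} then upgrades this to all of $\Isom(\F_d)$. By Theorem \ref{thm.lubotzkymozes}, it is enough to show that $\Isom^{ev}(\F_d)\curvearrowright(\Isom(\F_d)/\Lambda,m_\Lambda)$ is ergodic. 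Here is where the hypothesis $\Lambda\not\leq\F_d^{ev}$ enters: I claim that $\Isom^{ev}(\F_d)$ already acts \emph{transitively} on the (finite, since $[\Isom(\F_d):\Lambda]=[\Isom(\F_d):\F_d]\cdot[\F_d:\Lambda]$ — wait, $\Isom(\F_d)$ is not discrete) — more carefully, $\Isom(\F_d)/\Lambda$ fibers $\Isom(\F_d)$-equivariantly over $\Isom(\F_d)/\F_d$ with finite fiber $\F_d/\Lambda$, so by Lemma \ref{lem.actionquotient} the $\Isom(\F_d)$-action is the skew product over $\Isom(\F_d)\curvearrowright(\Isom(\F_d)/\F_d,m_\Gamma)$ with fiber action on $\F_d/\Lambda$ given by the cocycle $g\mapsto$ (the $\F_d/\Lambda$-permutation induced by $f(g\inv(1_{\F_d}))\inv$). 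To prove ergodicity of the $\Isom^{ev}(\F_d)$-action, I would show that the restriction of this cocycle to $\Isom^{ev}(\F_d)$ already hits a transitive subgroup of permutations of $\F_d/\Lambda$ on a conull set, which reduces exactly to the statement that $\F_d^{ev}\curvearrowright\F_d/\Lambda$ is transitive — i.e. condition \eqref{item.transitive} of Lemma \ref{lem.even}, which holds precisely because $\Lambda\not\leq\F_d^{ev}$. Combined with ergodicity of the base action $\Isom(\F_d)\curvearrowright(\Isom(\F_d)/\F_d,m_\Gamma)$ restricted to $\Isom^{ev}(\F_d)$ (which itself follows from Howe-Moore applied to the transitive, hence ergodic, action of $\Isom(\F_d)$ on its own quotient, plus the index-two argument — or directly since $\Isom(\F_d)/\F_d$ is a homogeneous space), a standard skew-product ergodicity criterion gives ergodicity of the total space.

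The step I expect to be the main obstacle is the clean organization of this last ergodicity argument: matching the abstract skew-product picture of Lemma \ref{lem.actionquotient} with the concrete combinatorics of Lemma \ref{lem.even}, and in particular verifying that restricting to $\Isom^{ev}(\F_d)$ does not destroy transitivity in the fibers — the point being that $\Isom^{ev}(\F_d)$ contains $\F_d^{ev}$ but the cocycle value $f(g\inv(1_{\F_d}))\inv$ for $g\in\Isom^{ev}(\F_d)$ need not lie in $\F_d^{ev}$, so one must check that the induced permutations of $\F_d/\Lambda$ still generate a transitive group. I would handle this by noting that it suffices to realize, for $m_\Gamma$-almost every base point, a set of cocycle values whose induced permutations act transitively, and that the $\F_d^{ev}$-orbit inside each fiber is already all of $\F_d/\Lambda$ by Lemma \ref{lem.even}\eqref{item.transitive}; pulling this back along the measurable isomorphism finishes the proof. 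Once ergodicity of the $\Isom^{ev}(\F_d)$-action is established, mixing of the $\Isom(\F_d)$-action is immediate from Theorem \ref{thm.lubotzkymozes} and Lemma \ref{lem.fisubgroupmixing}, and then \eqref{item.petitmixing} follows from \eqref{item.grosmixing} by Lemma \ref{lem.subgroupmixing}, closing the cycle.
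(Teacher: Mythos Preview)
Your cyclic scheme and the first two implications match the paper exactly. For \eqref{item.notcontainedmixing}$\Rightarrow$\eqref{item.grosmixing} you take the direct route (reduce to ergodicity of the $\Isom^{ev}(\F_d)$-action via Theorem \ref{thm.lubotzkymozes} and Lemma \ref{lem.fisubgroupmixing}), whereas the paper argues the contrapositive: assuming non-mixing, it extracts an $\Isom^{ev}(\F_d)$-invariant set of measure $1/2$, shows it must be of the form $\Isom_1(\F_d)\times U$, and then produces the forbidden bipartition of $\F_d/\Lambda$. The two arguments are mirror images and use the same ingredients.

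Two remarks on your direct argument. First, your worry that the cocycle value $f(g\inv(1_{\F_d}))\inv$ might fall outside $\F_d^{ev}$ for $g\in\Isom^{ev}(\F_d)$ is unfounded: since $g\inv\in\Isom^{ev}(\F_d)$ we have $g\inv(1_{\F_d})\in\F_d^{ev}$, and since $f\in\Isom_1(\F_d)$ preserves word length, $f(g\inv(1_{\F_d}))\in\F_d^{ev}$. Second, rather than invoking an unspecified ``skew-product ergodicity criterion'', the clean way to finish is precisely the observation the paper exploits: the compact subgroup $\Isom_1(\F_d)\leq\Isom^{ev}(\F_d)$ acts on $\Isom_1(\F_d)\times\F_d/\Lambda$ by $h(f,q)=(f\circ h\inv,q)$, transitively on the first factor and trivially on the second, so any $\Isom^{ev}(\F_d)$-invariant set equals $\Isom_1(\F_d)\times U$ up to null sets. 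Invariance under $\gamma\in\F_d^{ev}\leq\Isom^{ev}(\F_d)$ then forces $f(\gamma\inv)\inv U=U$ for $\mu$-a.e.\ $f$; since for each $\delta\in\F_d^{ev}$ the set $\{f\in\Isom_1(\F_d):f(\gamma\inv)=\delta\inv\}$ is open and non-empty for suitable $\gamma$, one gets $\delta U=U$ for every $\delta\in\F_d^{ev}$, and Lemma \ref{lem.even}\eqref{item.transitive} gives $U\in\{\emptyset,\F_d/\Lambda\}$. This replaces your appeal to an abstract criterion by two lines, and is exactly the paper's computation read forwards instead of backwards.
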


\begin{proof} In this proof, we denote by $\mu$ the Haar probability measure on $\Isom_1(\F_d)$. Let $\cdot$ be the \pmp{} action $\Isom(\F_d)\curvearrowright(\Isom_1(\F_d),\mu)$ given by 
\[(g\cdot f)\coloneqq f(g\inv(1_{\F_2}))\inv f\circ g\inv.\]
 If $u$ denotes the uniform probability measure on $\F_d/\Lambda$, then by Lemma \ref{lem.actionquotient}, the \pmp{} action $\Isom(\F_d)\curvearrowright (\Isom(\F_d)/\Lambda,m_\Lambda)$ is measurably isomorphic to the \pmp{} action $\Isom(\F_d)\curvearrowright (\Isom_1(\F_d)\times \F_d/\Lambda,\mu\otimes u)$ given by \[g(f,q)\coloneqq (g\cdot f,f(g\inv(1_\Gamma))\inv q).\]

The proof of \eqref{item.grosmixing}$\Rightarrow$\eqref{item.petitmixing} is a direct consequence of Lemma \ref{lem.subgroupmixing}. 

Let us prove \eqref{item.petitmixing}$\Rightarrow$\eqref{item.notcontainedmixing}. We prove the contrapositive. Assume that $\Lambda$ is contained in $\F_d^{ev}$. Then by Lemma \ref{lem.even}, there is a bipartition $\F_d/\Lambda =U\sqcup V$ such that for all $s\in S$, we have $sU=V$. By induction, we get that $\gamma U=U$ and $\gamma V=V$ for all $\gamma\in\F_d^{ev}$. For all $f\in\Isom_1(\F_d)$ and all $\gamma\in \F_d$, we have $\lvert f(\gamma\inv)\inv\rvert_{S}=\lvert\gamma\rvert_S$. Thus we obtain that the sets $\Isom_1(\F_d)\times U$ and $\Isom_1(\F_d)\times V$ are invariant by $\F_d^{ev}$. Thus, the \pmp{} action $\F_d\curvearrowright (\Isom_1(\F_d)\times \F_d/\Lambda,\mu\otimes u)$ is not mixing, which is equivalent to saying that $\F_d\curvearrowright (\Isom(\F_d)/\Lambda,m_\Lambda)$ is not mixing.

We now prove \eqref{item.notcontainedmixing}$\Rightarrow$\eqref{item.grosmixing}. We prove the contrapositive. Assume that $\Isom(\F_d)\curvearrowright (\Isom(\F_d)/\Lambda,m_\Lambda)$ is not mixing. That is, the action $\Isom(\F_d)\curvearrowright (\Isom_1(\F_d)\times \F_d/\Lambda,\mu\otimes u)$ is not mixing. By Lemma \ref{lem.fisubgroupmixing}, the \pmp{} action $\Isom^{ev}(\F_d)\curvearrowright (\Isom_1(\F_d)\times \F_d/\Lambda,\mu\otimes u)$ is not mixing and thus not ergodic by Theorem \ref{thm.lubotzkymozes}. Let $A\subseteq\Isom_1(\F_d)\times \F_d/\Lambda$ be a measurable subset of measure $1/2$ which is $\Isom^{ev}(\F_d)$-invariant. We define the following set
\[U\coloneqq\{q\in\F_2/\Lambda\colon \mu\otimes u(A\cap (\Isom_1(\F_d)\times\{q\}))>0\}.\]
Observe that the subgroup $\Isom_1(\F_d)$ is contained in $\Isom^{ev}(\F_d)$. Moreover, for all $g\in\Isom_1(\F_d)$ and for all $(f,q)\in\Isom_1(\F_d)\times\F_d/\Lambda$, we have
\[g(f,q)=(f\circ g\inv,q).\]
Thus, the group $\Isom_1(\F_d)$ acts transitively on each $\Isom_1(\F_d)\times\{q\}$. Since $A$ is $\Isom^{ev}(\F_d)$-invariant, we obtain that $A=\Isom_1(\F_d)\times U$ up to a conull set. We claim that the set $U$ and its complement $V$ form a partition of $\F_d/\Lambda$ such that for all $s\in S, sU=V$. Indeed, for $s\in S$, the facts that $s\notin\Isom^{ev}(\F_d)$ and that $\Isom^{ev}(\F_d)$ is normal in $\Isom(\Gamma)$ imply that $sA$ is $\Isom^{ev}(\F_d)$-invariant. But $sA$ cannot be equal to $A$, because otherwise $A$ would be a $\Isom(\F_d)$-invariant set of measure $1/2$, contradicting the ergodicity of $\Isom(\F_d)\curvearrowright (\Isom(\F_d)/\Lambda,m_\Lambda)$. Moreover, the intersection $sA\cap A$ cannot be of positive measure, otherwise it would be a $\Isom^{ev}(\F_d)$-invariant set of measure $<1/2$, which is impossible since $\Isom^{ev}(\F_d)$ has index two in $\Isom(\F_d)$. Thus, up to conull set, we have $sA= (\Isom_1(\F_d)\times\F_d/\Lambda)\setminus A$. Therefore $sU=V$. By Lemma \ref{lem.even}, we obtain that $\Lambda$ is not contained in $\F_d^{ev}$, which concludes the proof. 
\end{proof}

\begin{cor}\label{cor.isomOEflexibility}
Let $\Lambda\leq \F_d$ be a finite index subgroup which is not included in the even subgroup $\F_d^{ev}$. Then the \pmp{} action $\F_d\curvearrowright (\Isom(\F_d)/\Lambda,m_\Lambda)$ and the diagonal action $\F_d\curvearrowright (\Isom(\F_d)/\F_d\times\F_d/\Lambda,m_{\F_d}\otimes u)$ are isometric orbit equivalent but the former is mixing, whereas the latter is not.
\end{cor}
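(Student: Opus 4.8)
This corollary is a direct combination of Theorem~\ref{thm.constructionisomOE} and Theorem~\ref{thm.notcontained}; I expect no genuine obstacle here, the only point worth spelling out being why the diagonal action is not mixing.

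First I would apply Theorem~\ref{thm.constructionisomOE} with $\Gamma=\F_d$, equipped with its standard generating system $S$, and with the given finite index subgroup $\Lambda\leq\F_d$. This immediately gives that $\F_d\curvearrowright(\Isom(\F_d)/\Lambda,m_\Lambda)$ is isometric orbit equivalent to the diagonal action $\F_d\curvearrowright(\Isom(\F_d)/\F_d\times\F_d/\Lambda,m_{\F_d}\otimes u)$. Then, since by hypothesis $\Lambda$ is not contained in the even subgroup $\F_d^{ev}$, the equivalence \eqref{item.petitmixing}$\Leftrightarrow$\eqref{item.notcontainedmixing} of Theorem~\ref{thm.notcontained} shows that $\F_d\curvearrowright(\Isom(\F_d)/\Lambda,m_\Lambda)$ is mixing.

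It then remains to check that the diagonal action is not mixing (which forces $[\F_d:\Lambda]\geq 2$, as one must assume for the statement to hold). The projection onto the second coordinate exhibits the finite \pmp{} action $\F_d\curvearrowright(\F_d/\Lambda,u)$ as a factor of the diagonal action, so it suffices to see that the former is not mixing; and indeed, taking $A=B=\{\Lambda\}\subseteq\F_d/\Lambda$, one has $u(\gamma A\cap B)=u(\{\Lambda\})=1/[\F_d:\Lambda]$ for every $\gamma$ in the infinite subgroup $\Lambda$, which does not tend to $u(A)u(B)=1/[\F_d:\Lambda]^2$ as $\gamma\to\infty$. Hence the diagonal action fails to be mixing, and the corollary follows.
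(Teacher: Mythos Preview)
Your proof is correct and is exactly the argument the paper has in mind: the corollary is stated without proof in the paper, being an immediate combination of Theorem~\ref{thm.constructionisomOE} and Theorem~\ref{thm.notcontained}, together with the observation (made in the introduction) that the diagonal action is never mixing. Your explicit verification of this last point via the finite factor $\F_d\curvearrowright\F_d/\Lambda$ is the natural one, and your parenthetical remark that one implicitly needs $[\F_d:\Lambda]\geq 2$ is a fair observation.
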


\begin{rmq} Let $d\geq 0$ and $d'\geq 0$ be two natural numbers. Let 
\[\Gamma\coloneqq \langle x_1,\dots,x_d,y_1,\dots,y_{d'}\mid y_1^2=\dots=y_{d'}^2=1\rangle.\]
The group $\Gamma$ is isomorphic to the free product of the free group $\F_d$ and the universal Coxeter group $W_{d'}$ of rank $d'$. Let $S_\Gamma$ be the finite generating system given by 
\[S_\Gamma\coloneqq \{x_1^{\pm 1},\dots, x_d^{\pm 1}, y_1,\dots,y_{d'}\}.\]
Then the Cayley graph $(\Gamma,S_\Gamma)$ is isometric to a $(2d+d')$-regular tree. If $(2d+d')\geq 3$, then the same kind of arguments used in this section can be used to show that Corollary \ref{cor.isomOEflexibility} is true if $\F_d$ is replaced by $\Gamma$. 
\end{rmq}

\subsection{A concrete isometric orbit equivalence for $\F_2$}\label{sec.concreteexample}

We finish this section with a concrete example of two \pmp{} ergodic actions of $\F_2$ that are isometric orbit equivalent but not measurably isomorphic. 

Let $\F_2$ be the free group on two generators $a$ and $b$ and let $S=\{a^{\pm 1},b^{\pm 1}\}$ be the standard generating system. Let $\lvert\cdot\rvert_S$ be the word length associated with $S$. The Cayley graph $(\F_2,S)$ is isomorphic to the $4$-regular tree. Let $\Isom(\F_2)$ be the group of bijective isometries of $(\F_2,S)$, that is, the group of all bijections $f:\F_2\to\F_2$ such that for all $\gamma,\delta\in\F_2$, 
\[\lvert f(\gamma)\inv f(\delta)\rvert_S=\lvert \gamma\inv\delta\rvert_S.\]  Let $\mathcal{C}$ be the space of proper colorings with five colors on the vertex set of the Cayley graph $(\F_2,S)$. That is, an element of $\mathcal{C}$ is a map $col:\F_2\to\{1,2,3,4,5\}$ such that for all $\gamma\in \F_2$ and for all $s\in S$, we have $col(\gamma)\neq col(\gamma s)$. The set $\mathcal{C}$ is a closed, thus compact, subspace of $\{1,2,3,4,5\}^{\F_2}$. The group $\Isom(\F_2)$ acts on $\mathcal{C}$ and we denote by $*$ the action, which is defined as follows: for all $f\in\Isom(\F_2)$ and $c\in\mathcal{C}$, the coloring $f*col$ is given by $\gamma\mapsto col(f\inv(\gamma))$. This action is simply transitive and it admits a unique invariant probability measure $\mu$, which can be constructed as follows. First, choose uniformly at random the color of the identity element $1_{\F_2}$. Then, by moving radially outwards $1_{\F_2}$ in the Cayley graph, extend the coloring at each vertex by choosing the color uniformly at random among the admissible ones, independently at each vertex. Thus we get a \pmp{} action $*$ of $\F_2$ on $(\mathcal{C},\mu)$. Let us explain briefly why this action is mixing. Fix two distinct $5$-cycles $A,B\in\mathrm{Sym}(\{1,\dots,5\})$ such that for all $i\in\{1,\dots,5\}$, $A(i)\neq B(i)$. For instance, take
\[A \coloneqq (1\ 2\ 3\ 4\ 5) \text{ and }B\coloneqq(1\ 3\ 5\ 2\ 4).\]
This yields a transitive action $\F_2\curvearrowright \{1,2,3,4,5\}$ by letting the generator $a$ act like $A$ and $b$ act like $B$. Let $\Lambda$ be the stabilizer of the point $1$. If $m_\Lambda$ denotes the Haar probability measure on the quotient $\Isom(\F_2)/\Lambda$, then it can be proved that the \pmp{} action $*$ of $\F_2$ on $(C_2,\mu)$ is measurably isomorphic to $\F_2\curvearrowright (\Isom(\F_2)/\Lambda,m_\Lambda)$. By definition, $\F_2\curvearrowright\{1,\dots,5\}$ is isomorphic to the action $\F_2\curvearrowright\F_2/\Lambda$. As there is no bipartition of $\{1,\dots,5\}$ whose pieces are exchanged by any element $s\in S$, we get by Lemma \ref{lem.even} and Theorem \ref{thm.notcontained} that the \pmp{} action $*$ of $\F_2$ on $(\mathcal{C},\mu)$ is mixing. 

Let us construct another \pmp{} action of $\F_2$ on $(\mathcal{C},\mu)$, that we denote by $\star$, which is isometric orbit equivalent to 
$*$, but which is not mixing.

\begin{figure}[h!]
\centering
\begin{tikzpicture}[scale=1]
\foreach \i in {1,...,5}{
    \node[draw, circle,inner sep=1pt] (5\i) at (\i*360/5+360/20:2.2) {\tiny\i};}
    \draw[thick] (51) -- (52) node[
    currarrow,
    pos=0.5, 
    xscale=-1,
    sloped,
    scale=0.6] {};
    \draw[thick] (52) -- (53)node[
    currarrow,
    pos=0.5, 
    xscale=1,
    sloped,
    scale=0.5] {};
	\draw[thick] (53) -- (54)node[
    currarrow,
    pos=0.5, 
    xscale=1,
    sloped,
    scale=0.5] {};
	\draw[thick] (54) -- (55)node[
    currarrow,
    pos=0.5, 
    xscale=1,
    sloped,
    scale=0.5] {};
	\draw[thick] (55) -- (51)node[
    currarrow,
    pos=0.5, 
    xscale=-1,
    sloped,
    scale=0.5] {};
	\draw[color=red, thick] (51) -- (53)node[
    currarrow,
    pos=0.2, 
    xscale=-1,
    sloped,
    scale=0.5] {};
	\draw[color=red, thick] (53) -- (55) node[
    currarrow,
    pos=0.2, 
    xscale=1,
    sloped,
    scale=0.5] {};
	\draw[color=red, thick] (55) -- (52) node[
    currarrow,
    pos=0.2, 
    xscale=-1,
    sloped,
    scale=0.5] {};
	\draw[color=red, thick] (52) -- (54) node[
    currarrow,
    pos=0.2, 
    xscale=1,
    sloped,
    scale=0.5] {};
	\draw[color=red, thick] (54) -- (51) node[
    currarrow,
    pos=0.2, 
    xscale=-1,
    sloped,
    scale=0.5] {};


\node[draw, circle, inner sep=1pt] (e) at (8,0) {\tiny 2};
\node[draw, circle, inner sep=1pt] (a) at (10,0) {\tiny 1};
\node[draw, circle, inner sep=1pt] (A) at (6,0) {\tiny 5};
\node[draw, circle, inner sep=1pt] (b) at (8,2) {\tiny 4};
\node[draw, circle, inner sep=1pt] (B) at (8,-2) {\tiny 3};

\node[draw, circle, inner sep=1pt] (ab) at (10,1) {\tiny 5};
\node[draw, circle, inner sep=1pt] (aB) at (10,-1) {\tiny 4};
\node[draw, circle, inner sep=1pt] (aa) at (11,0) {\tiny 3};

\node[draw, circle, inner sep=1pt] (bb) at (8,3) {\tiny 1};
\node[draw, circle, inner sep=1pt] (bA) at (7,2) {\tiny 3};
\node[draw, circle, inner sep=1pt] (ba) at (9,2) {\tiny 5};

\node[draw, circle, inner sep=1pt] (AA) at (5,0) {\tiny 3};
\node[draw, circle, inner sep=1pt] (Ab) at (6,1) {\tiny 1};
\node[draw, circle, inner sep=1pt] (AB) at (6,-1) {\tiny 4};

\node[draw, circle, inner sep=1pt] (BB) at (8,-3) {\tiny 1};
\node[draw, circle, inner sep=1pt] (Ba) at (9,-2) {\tiny 5};
\node[draw, circle, inner sep=1pt] (BA) at (7,-2) {\tiny 4};

\draw[thick] (AA) -- (A) -- (e) -- (a) -- (aa); 
\draw[thick] (BB) -- (B) -- (e) -- (b) -- (bb); 
\draw[thick] (BA) -- (B) -- (Ba); 
\draw[thick] (bA) -- (b) -- (ba); 
\draw[thick] (aB) -- (a) -- (ab); 
\draw[thick] (AB) -- (A) -- (Ab); 

\draw[dotted, thick] (aa) --(11.4,0);
\draw[dotted, thick] (aa) --(11,0.4);
\draw[dotted, thick] (aa) --(11,-0.4);

\draw[dotted, thick] (AA) --(4.6,0);
\draw[dotted, thick] (AA) --(5,0.4);
\draw[dotted, thick] (AA) --(5,-0.4);

\draw[dotted, thick] (bb) --(7.6,3);
\draw[dotted, thick] (bb) --(8.4,3);
\draw[dotted, thick] (bb) --(8,3.4);

\draw[dotted, thick] (BB) --(7.6,-3);
\draw[dotted, thick] (BB) --(8.4,-3);
\draw[dotted, thick] (BB) --(8,-3.4);

\draw[dotted, thick] (ab) --(10,1.4);
\draw[dotted, thick] (ab) --(10.4,1);
\draw[dotted, thick] (ab) --(9.6,1);

\draw[dotted, thick] (aB) --(10,-1.4);
\draw[dotted, thick] (aB) --(10.4,-1);
\draw[dotted, thick] (aB) --(9.6,-1);

\draw[dotted, thick] (Ab) --(6,1.4);
\draw[dotted, thick] (Ab) --(6.4,1);
\draw[dotted, thick] (Ab) --(5.6,1);

\draw[dotted, thick] (AB) --(6,-1.4);
\draw[dotted, thick] (AB) --(6.4,-1);
\draw[dotted, thick] (AB) --(5.6,-1);

\draw[dotted, thick] (ba) --(9.4,2);
\draw[dotted, thick] (ba) --(9,2.4);
\draw[dotted, thick] (ba) --(9,1.6);

\draw[dotted, thick] (bA) --(6.6,2);
\draw[dotted, thick] (bA) --(7,2.4);
\draw[dotted, thick] (bA) --(7,1.6);

\draw[dotted, thick] (Ba) --(9.4,-2);
\draw[dotted, thick] (Ba) --(9,-2.4);
\draw[dotted, thick] (Ba) --(9,-1.6);

\draw[dotted, thick] (BA) --(6.6,-2);
\draw[dotted, thick] (BA) --(7,-2.4);
\draw[dotted, thick] (BA) --(7,-1.6);

\draw (e) node[below right]{\tiny$1_{\F_2}$};
\draw (a) node[above left]{\tiny$a$};
\draw (aa) node[below right]{\tiny$a^2$};
\draw (aB) node[below right]{\tiny$ab\inv$};
\draw (ab) node[above right]{\tiny$ab$};
\draw (AA) node[below left]{\tiny$a^{-2}$};
\draw (Ab) node[above left]{\tiny$a\inv b$};
\draw (AB) node[below left]{\tiny$a\inv b\inv$};
\draw (A) node[below right]{\tiny$a\inv$};
\draw (B) node[above right]{\tiny$b\inv$};
\draw (Ba) node[below right]{\tiny$b\inv a$};
\draw (BB) node[below right]{\tiny$b^{-2}$};
\draw (BA) node[below left]{\tiny$b\inv a\inv$};
\draw (bA) node[above left]{\tiny$b a\inv$};
\draw (bb) node[above left]{\tiny$b^2$};
\draw (ba) node[above right]{\tiny$ba$};
\draw (b) node[below left]{\tiny$b$};
\end{tikzpicture}

\caption{On the left, black arrows represent the permutation $A$, red arrows $B$. On the right, the portion of an element $c\in\mathcal{C}$, for which $a\star col=b *col$, $a\inv\star col=a\inv *col$, $b\star col=b\inv *col$ and $b\inv\star col=a *col$.}
\label{fig.isomOE}
\end{figure}
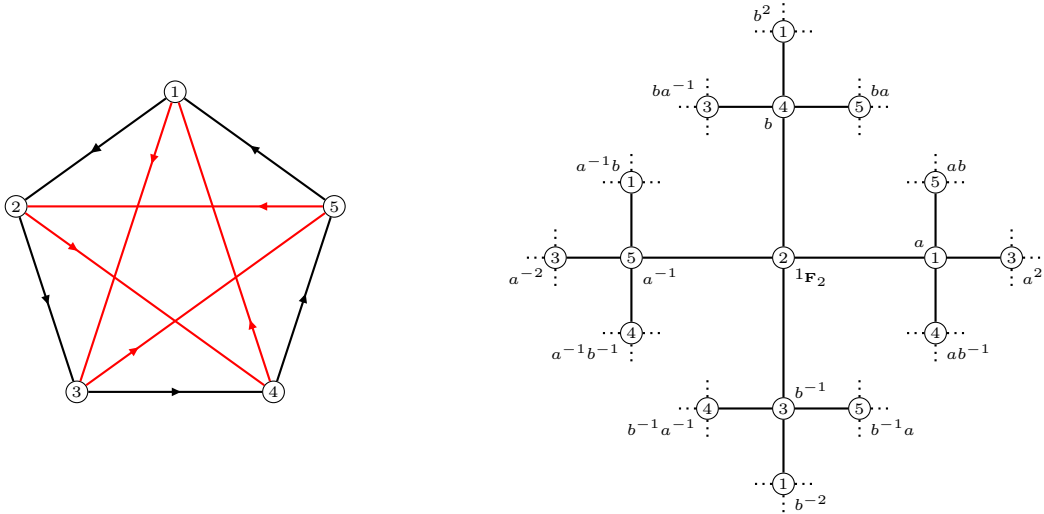

We define $\star$ by the action of the generators $a$ and $b$ of $\F_2$. For $col\in\mathcal{C}$, we let 
\[a\star col\coloneqq s*col,\]
where $s$ is the unique element in $\{a^{\pm 1},b^{\pm 1}\}$ such that $(s*col)(1_{\F_2})=A(col(1_{\F_2}))$. Such an element $s$ exists because $c$ is a proper vertex coloring. Similarly, we define 
\[b\star col\coloneqq t*col,\]
where $t$ is the unique element in $\{a^{\pm 1},b^{\pm 1}\}$ such that $(t*col)(1_{\F_2})=B(col(1_{\F_2}))$.

This defines a \pmp{} action $\star$ of $\F_2$ on $(C,\mu)$. Observe that for each $i\in\{1,\dots,5\}$, the set
\[\{col\in\mathcal{C}\colon col(1_\Gamma)=i\}\]
is invariant by $\Lambda$. Thus, the \pmp{} action $\star$ is not mixing. Moreover, by construction, the actions $*$ and $\star$ are isometric orbit equivalent. This yields a concrete illustration of Theorem \ref{thm.constructionisomOE}. Indeed, it can be showed that:

\begin{itemize}
\item[--] the \pmp{} action $*$ is measurably isomorphic to $\F_2\curvearrowright (\Isom(\F_2)/\Lambda,m_\Lambda)$, where $m_\Lambda$ is the Haar probability measure on $\Isom(\F_2)/\Lambda$,
\item[--] the \pmp{} action $\star$ is measurably isomorphic to the diagonal action $\F_2\curvearrowright (\Isom(\F_2)/\F_2\times \F_2/\Lambda, m_{\F_2}\otimes u)$ where $m_{\F_2}$ is the Haar probability measure on $\Isom(\F_2)/\F_2$ and $u$ is the uniform probability measure on $\F_2/\Lambda$. 
\end{itemize}

\appendix
\section{Free groups have the Haagerup property}\label{app}

In this appendix we provide a new proof that free groups have the Haagerup property. A countable group $\Gamma$ have the \defin{Haagerup property} if $\Gamma$ admits a proper continuous affine isometric action on a Hilbert space. It turns out that the Haagerup property can be characterized in terms of the existence of \pmp{} actions with suitable properties. A \pmp{} action $\Gamma\curvearrowright (X,\mu)$ of a countable group $\Gamma$ is \defin{strongly ergodic} if for all sequence $(A_n)_{n\geq 0}$ of measurable sets such that 
\[\underset{n\to +\infty}{\lim} \mu(A_n\triangle \gamma A_n), \forall \gamma\in\Gamma,\]
we have $\mu(A_n)(1-\mu(A_n))\longrightarrow0$. Informally, any sequence of measurable sets which is asymptotically invariant must be asymptotically null or conull. Jolissaint proved in \cite[Thm.~2.1.3]{Jolissaint} that a countable group $\Gamma$ satisfies the Haagerup property if and only if it admits a \pmp{} action on a standard probability space which is mixing but not strongly ergodic. For free groups, concrete examples of pmp mixing, non strongly ergodic actions can be constructed following the lines of the proof of \cite[Thm.~2.1.3]{Jolissaint} using Gaussian construction. Other concrete examples of such actions using Ising model with small transition probability are constructed in \cite[p.~90-91]{Hjorth}. We provide here new examples of such actions.


\begin{thm} Let $d\geq 2$. Let $\mathbf{F}_d=\Gamma_0\leq\Gamma_1\leq\dots\leq\Gamma_n\leq\dots$ be a sequence of finite index subgroups ,each normal in $\mathbf{F}_d$, such that $(i)$ the sequence of Schreier graphs $\mathrm{Sch}(\mathbf{F}_d/\Gamma_n,S_{\mathbf{F}_d})$ is not a family of expanders and $(ii)$ none of the Schreier graphs $\mathrm{Sch}(\mathbf{F}_d/\Gamma_n,S_{\mathbf{F}_d})$ is bipartite. Let $m_n$ denotes the Haar probability measure on $\Isom(\F_d)/\Gamma_n$. Then the projective limit
\[\mathbb{F}_d\curvearrowright \varprojlim(\Isom(\F_d)/\Gamma_n,m_n)\]
is a \pmp{} mixing action that is not strongly ergodic. 
\end{thm}

\begin{proof} Fix a sequence of finite index subgroups $\mathbf{F}_d=\Gamma_0\leq\Gamma_1\leq\dots\leq\Gamma_n\leq\dots$ as in the theorem. Since none of the Schreier graphs $\mathrm{Sch}(\mathbf{F}_d/\Gamma_n,S_{\F_d})$ is bipartite, we obtain by Theorem \ref{thm.notcontained} and Lemma \ref{lem.even} that the \pmp{} action $\F_d\curvearrowright (\Isom(\F_d)/\Gamma_n,\mu_n)$ is mixing. Since an inverse limit of mixing actions is mixing \cite[Thm.~7]{Brown}, we obtain that the projective limit 
\[\mathbb{F}_d\curvearrowright \varprojlim(\Isom(\F_d)/\Gamma_n,m_n)\]
is a \pmp{} mixing action. 

Let us now prove that this action is not strongly ergodic. For all $n\geq 0$, let $u_n$ denote the uniform probability measure on $\mathbf{F}_d/\Gamma_n$. Since the sequence of Schreier graphs $\mathrm{Sch}(\mathbf{F}_d/\Gamma_n,S_{\F_d})$ is not a family of expanders, the profinite action $\F_d\curvearrowright \varprojlim (\F_d/\Gamma_n,u_n)$ has no spectral gap. By \cite[Thm.~4]{AbertElekprofinite}, this profinite action is not strongly ergodic. By Corollary \ref{cor.isomOEflexibility}, the projective limit 
\[\mathbb{F}_d\curvearrowright \varprojlim(\Isom(\F_d)/\Gamma_n,m_n)\]
is (isometric) orbit equivalent to the diagonal action 
\[\F_d\curvearrowright (\Isom(\F_d)/\F_d,m_0)\times \varprojlim (\F_d/\Gamma_n,u_n),\]
which is not strongly ergodic. Since strong ergodicity is invariant under orbit equivalence, we obtain that the projective limit $\mathbb{F}_d\curvearrowright \varprojlim(\Isom(\F_d)/\Gamma_n,m_n)$ is not strongly ergodic, which concludes the proof.

\end{proof}

	\bibliographystyle{alpha}
	\bibliography{biblio}
	
	\Addresses
	
\end{document}